\documentclass[11pt]{article}

\usepackage{latexsym,amsfonts}
\usepackage{amssymb,amsthm,upref,amscd}
\usepackage[T1]{fontenc}
\usepackage{times}
\usepackage{amscd}
\usepackage{mathrsfs}
\usepackage[reqno]{amsmath}

\linespread{1.0}
 \textwidth 148mm \textheight 225mm
\setlength{\oddsidemargin}{0.5cm} \setlength{\topmargin}{0cm}
\setlength{\footskip}{1.2cm}
\allowdisplaybreaks[4]

\newtheorem{Theorem}{Theorem}[section]
\newtheorem{definition}[Theorem]{Definition}
\newtheorem{lemma}[Theorem]{Lemma}
\newtheorem{proposition}[Theorem]{Proposition}
\newtheorem{corollary}[Theorem]{Corollary}
\newtheorem{remark}[Theorem]{Remark}
\newtheorem*{Thmwn}{Theorem A}

\makeatletter \@addtoreset{equation}{section} \makeatother

\begin{document}

\title{\bf Existence and qualitative properties of solutions for a Choquard-type equation
with Hardy potential
\thanks{Corresponding author. E-mail address: g289623856@126.com (T. Guo),
tangxh@mail.csu.edu.cn (X. H. Tang).}}

\author{Ting Guo$^{\rm a}$, \; Xianhua Tang$^{\rm b,}$$^*$ }

\date{\small\it$^{\rm a}$Faculty of Science, Jiangxi University of Science and Technology, Ganzhou 341000, China\\
$^{\rm b}$School of Mathematics and Statistics, Central South University, Changsha 410083, China}

\maketitle

\begin{minipage}{13cm}
{\small {\bf Abstract:}  In this paper, we study the existence and qualitative properties of positive
solutions to a Choquard-type equation with Hardy potential. We develop a nonlocal version of concentration-compactness principle
involving the Hardy potential to study the existence and
 the asymptotic behavior of positive solutions by transforming
the original problem into a new nonlocal problem in the weighted Sobolev space.
Moreover, we obtain the symmetry of solutions by using the moving
plane method.

\medskip {\bf Key words:} Choquard-type equation; Hardy potential; Asymptotic behavior; Symmetry.

\medskip 2010 Mathematics Subject Classification:  35B06, 35B33, 35B40, 35J20}
\end{minipage}

\section{Introduction and main results}

Recently, the nonlinear Choquard equation
\begin{equation}\label{PCA}
 -\Delta u+V(x)u=(I_{\alpha}\ast|u|^{p})|u|^{p-2}u,~~~~~x\in \mathbb{R}^{N},\\
\end{equation}
 where $I_{\alpha}:\mathbb{R}^{N}\setminus\{0\}\rightarrow\mathbb{R}$
is the Riesz potential defined by
\begin{equation*}
 I_{\alpha}(x)=\frac{\varGamma(\frac{N-\alpha}{2})}{\varGamma(\frac{\alpha}{2})\pi^{N/2}2^{\alpha}|x|^{N-\alpha}},
 \end{equation*}
has received quite a few investigation. Physical motivation of (\ref{PCA}) comes from the special case
\begin{equation}\label{TSPCA}
 -\Delta u+u=(I_{2}\ast|u|^{2})u,~~~~~x\in \mathbb{R}^{3}.\\
\end{equation}
 Eq. (\ref{TSPCA}) is called the Choquard-Pekar equation \cite{EHLI,SPEK}, Hartree equation \cite{LNR} or
Schr\"odinger-Newton equation \cite{MPTO} respectively, according to its physical background and derivation. In the pioneering
work \cite{EHLI}, the existence and uniqueness of positive solutions to (\ref{TSPCA})
were first obtained by Lieb. Later, in \cite{PLLI,PLLIONS}, Lions got the existence and multiplicity results of normalized solution of (\ref{PCA})with some restrictions on $V(x)$ such as positivity etc.

To study problem (\ref{PCA})
variationally, the well-known Hardy-Littlewood-Sobolev
inequality is the starting point.
\begin{Thmwn} {\rm(\cite{LIEBLOSS})} {\rm(Hardy-Littlewood-Sobolev inequality)}\  Let $\alpha\in (0,N)$, and $p,r>1$ with
  \begin{equation*}
\frac{1}{p}+\frac{1}{r}=1+\frac{\alpha}{N}.
\end{equation*}
Let $f\in L^{p}(\mathbb{R}^{N})$, $g\in L^{r}(\mathbb{R}^{N})$. Then there exists a sharp constant
$S(N,\alpha,p,r)$, independent of $f$ and $g$, such that
  \begin{equation}\label{hdlsiyjsz}
  \left|\int_{\mathbb{R}^{N}}\int_{\mathbb{R}^{N}}\frac{f(x)g(y)}{|x-y|^{N-\alpha}}\mathrm{d}x\mathrm{d}y\right|
  \leq
  S(N,\alpha,p,r)|f|_p |g|_r,
\end{equation}
where $|f|_p=\left(\int_{\mathbb{R}^{N}}|f|^p\mathrm{d}x\right)^{1/p}$. If $p=r=\frac{2N}{N+\alpha}$, then
$$
S(N,\alpha,p,r)=S(N,\alpha)=\pi^{\frac{N-\alpha}{2}}
  \frac{\varGamma(\frac{\alpha}{2})}{\varGamma(\frac{N+\alpha}{2})}
  \left(\frac{\varGamma(\frac{N}{2})}{\varGamma(N)}\right)^{-{\frac{\alpha}{N}}}.
$$
In this case, the equality in {\rm (\ref{hdlsiyjsz})} is achieved if and only if
$f\equiv (const.)g$ and
$$
g(x)=A(t^2+|x-\xi|^2)^{-\frac{N+\alpha}{2}}
$$
for some $A\in \mathbb{C}$, $\xi \in \mathbb{R}^{N}$ and $0\neq t\in \mathbb{R}$.
\end{Thmwn}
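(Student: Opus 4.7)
The plan is to split the argument into (i) the inequality for general admissible exponents $(p,r)$ and (ii) the identification of the sharp constant and extremizers in the conformal case $p=r=\frac{2N}{N+\alpha}$. The first reduction in either case is to replace $f,g$ by $|f|,|g|$ using the triangle inequality inside the double integral, so one may assume $f,g\ge 0$. Then, invoking Riesz's rearrangement inequality applied to the symmetric decreasing kernel $|x-y|^{\alpha-N}$, the left-hand side only increases when $f,g$ are replaced by their symmetric-decreasing rearrangements $f^{*},g^{*}$, while $|f|_p$ and $|g|_r$ are preserved. Hence all subsequent analytic work is carried out on nonnegative, radial, nonincreasing functions.

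For the nonsharp inequality, introduce the Riesz potential $T_{\alpha}g(x)=\int_{\mathbb{R}^{N}}|x-y|^{\alpha-N}g(y)\,\mathrm{d}y$. Setting $1/q=1/p-\alpha/N$, the crux is the weak-type bound $T_{\alpha}:L^{p}\to L^{q,\infty}$, which I would prove by Hedberg's splitting trick: for each threshold $R>0$, decompose the integration region into $|x-y|<R$ (controlled by the Hardy--Littlewood maximal function of $g$) and $|x-y|\ge R$ (controlled by $|g|_p$ via H\"older), and optimize in $R$. Marcinkiewicz interpolation between two admissible exponents upgrades this to the strong bound $|T_{\alpha}g|_{q}\lesssim|g|_{p}$. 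Combining with H\"older's inequality on $f\cdot T_{\alpha}g$, and noting that the constraint $1/p+1/r=1+\alpha/N$ is equivalent to $1/r+1/q=1$, yields the inequality (\ref{hdlsiyjsz}) up to some (nonsharp) constant.

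The heart of Theorem~A is the sharp constant $S(N,\alpha)$ and the extremizer characterization in the conformal case $p=r=\frac{2N}{N+\alpha}$. Here I would follow Lieb's conformal-invariance approach: after reducing to symmetric-decreasing $f=g\ge 0$ by rearrangement and homogeneity, pull the problem back to the sphere $S^{N}$ via stereographic projection. Under this transformation the kernel $|x-y|^{-(N-\alpha)}$ picks up conformal factors that precisely cancel the Jacobian weights in the $L^{2N/(N+\alpha)}$-norm, so the normalized functional becomes the conformally invariant integral $\iint_{S^{N}\times S^{N}}|\xi-\eta|^{\alpha-N}F(\xi)F(\eta)\,\mathrm{d}\xi\mathrm{d}\eta$ against a fixed $L^{2N/(N+\alpha)}(S^{N})$-norm. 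A competing-symmetries argument, alternating symmetric-decreasing rearrangement in $\mathbb{R}^{N}$ with rotation on $S^{N}$, forces any extremizing sequence to converge strongly to a constant on $S^{N}$; pulling back via inverse stereographic projection produces exactly the family $A(t^{2}+|x-\xi|^{2})^{-(N+\alpha)/2}$. Evaluating the functional on the constant function on $S^{N}$ yields the explicit ratio of $\varGamma$-factors that defines $S(N,\alpha)$.

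The main obstacle is the sharp constant and extremizer identification in step (iii): the weak-type plus Marcinkiewicz interpolation route of step (ii) necessarily loses sharpness at the interpolation step, so one cannot avoid introducing the full conformal machinery. The subtle point is proving that the constants on $S^{N}$ are \emph{global} maximizers rather than merely critical points, which requires verifying strong $L^{2N/(N+\alpha)}$-convergence of the competing-symmetry iteration together with the rigidity statement that equality in Riesz's rearrangement inequality for a strictly symmetric-decreasing kernel forces the profile to be a translate of its rearrangement.
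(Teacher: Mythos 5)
The paper does not prove Theorem~A: it is recalled verbatim from Lieb and Loss's \emph{Analysis} (the reference \cite{LIEBLOSS}) and then used as a black box, so there is no proof of the paper's own to compare your sketch against. That said, your outline is essentially the standard one and is mathematically sound at the level of strategy: reduction to nonnegative symmetric-decreasing profiles by Riesz rearrangement, the rough bound via boundedness of the Riesz potential $T_{\alpha}$, and Lieb's conformal-invariance argument with competing symmetries on $S^{N}$ for the sharp constant $S(N,\alpha)$ and the extremizer family $A\bigl(t^{2}+|x-\xi|^{2}\bigr)^{-(N+\alpha)/2}$. One small imprecision worth flagging: in the range $1<p<N/\alpha$, Hedberg's pointwise bound $T_{\alpha}g(x)\lesssim (Mg(x))^{1-\alpha p/N}\,|g|_{p}^{\alpha p/N}$ already yields the strong $L^{p}\to L^{q}$ estimate directly from the maximal-function bound, so Marcinkiewicz interpolation is an alternative route rather than a necessary upgrade, and in any case it is not the interpolation step per se that ``loses sharpness'' --- the Hedberg-type bound is nonsharp from the outset, which is precisely why the separate conformal argument is needed for the case $p=r=\frac{2N}{N+\alpha}$. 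You should also note that your weak-type reduction tacitly requires $1/p-\alpha/N>0$, i.e.\ $p<N/\alpha$, which is automatic here from $r>1$ and the constraint $1/p+1/r=1+\alpha/N$.
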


For $N\geq 3$, $\alpha\in (0,N)$. By the Sobolev embedding theorem,
$H^{1}(\mathbb{R}^{N})\subset L^{\frac{2Np}{N+\alpha}}(\mathbb{R}^{N})$ if and only if
$p\in [\frac{N+\alpha}{N}, \frac{N+\alpha}{N-2}]$.
Usually,
$\frac{N+\alpha}{N-2}$ (or $ \frac{N+\alpha}{N}$)
is called the upper (or lower) critical exponent with
respect to the Hardy-Littlewood-Sobolev inequality
(see \cite{LIEBLOSS}). For the subcritical autonomous case
$V(x)\equiv1$ and $p\in(\frac{N+\alpha}{N}, \frac{N+\alpha}{N-2})$,
Moroz and
Van Schaftingen \cite{MOVS} obtained some nonexistence results
under the range $p\geq\frac{N+\alpha}{N-2}$ or
$p\leq\frac{N+\alpha}{N}$, as well as established the
regularity, positivity and the decay estimates of
groundstates.
There are not a few approaches devoted to the existence and multiplicity of solutions of \eqref{PCA}
and their qualitative properties(see for instance \cite{GvanS,MAZHAO,MOVS,MS2}).

In the present paper, we consider the upper critical problem
\begin{equation}\label{eq1}
-\Delta u-\frac{\vartheta}{|x|^2}u=(I_{\alpha}\ast|u|^{\bar{p}})|u|^{\bar{p}-2}u,
~~~~~x\in\mathbb{R}^{N}\setminus\{0\},
\end{equation}
where $N\geq 3$, $0<\vartheta<\frac{(N-2)^2}{4}$, $(N-4)_{+}<\alpha<N$ and $\bar{p}=\frac{N+\alpha}{N-2}$.
In view of \cite{RS}, the Hardy potential $\frac{\vartheta}{|x|^2}$ is critical.  It indeed has the same
homogeneity as the
Laplacian and does not belong to the Kato's class.
Hence, problem (\ref{eq1}) can be considered as doubly critical.

When $\vartheta=0$, (\ref{eq1}) reduces to
\begin{equation}\label{eq4}
-\Delta u=(I_{\alpha}\ast|u|^{\bar{p}})|u|^{\bar{p}-2}u,
~~~~~x\in\mathbb{R}^{N}.
\end{equation}
By using the method of moving plane in integral forms, Guo et al. \cite{GUOLUN} proved that
all positive solutions of (\ref{eq4}) are radially symmetric and radially decreasing
about some point $x_{0}\in\mathbb{R}^{N}$ and are given by
 $$u(x)=
 C_{\alpha}\left(\frac{t}{t^2+|x-x_{0}|^2}\right)^\frac{N-2}{2},$$
where $C_{\alpha}$ and $t$ are positive constants.

For the local case, Terracini \cite{Terracini} considered the doubly critical problem
\begin{equation}\label{eq2}
-\Delta u-\frac{\mu}{|x|^2}u=|u|^{2^*-2}u,
~~~~~x\in\mathbb{R}^{N}\setminus\{0\}.\\
\end{equation}
Denote $\beta:=\sqrt{\frac{(N-2)^2}{4}-\mu}$ for $0\leq \mu <\frac{(N-2)^2}{4}$.
The author proved existence, uniqueness and qualitative properties
of the solution of problem (\ref{eq2}) by using the variation arguments and the
moving plane method. In particular, it was shown that all positive
solutions of (\ref{eq2}) are of the form $U_{\mu}^{\varepsilon}(x):=\varepsilon^
{\frac{2-N}{2}}U_{\mu}(x/\varepsilon)$, where
\begin{equation}\label{SHS}
U_{\mu}(x)
:=\frac{C_{\mu}(N)}{|x|^{\frac{N-2}{2}-\beta}(1+|x|^
{\frac{4\beta}{N-2}})^{\frac{N-2}{2}}}
\end{equation}
for an appropriate constant $C_{\mu}(N)>0$. Moreover, these
solutions are minimizers of $$S_{\mu}:=
\inf_{u\in D^{1,2}(\mathbb{R}^{N})\setminus\{0\}}
\frac{ \int_{\mathbb{R}^{N}}(|\nabla u|^2-\frac
{\mu |u|^2}{|x|^2})\mathrm{d}x}{(\int_{\mathbb{R}^{N}}
|u|^{2^*}\mathrm{d}x)^\frac{2}{2^*}},$$
where $D^{1,2}(\mathbb{R}^{N})=\{u\in L^{2^*}(\mathbb{R}^{N}):\
\nabla u \in L^{2}(\mathbb{R}^{N})\}$.

For fractional equations, Dipierro et al. \cite{SDipierro}
investigated the doubly critical problem
\begin{equation}\label{eq3}
(-\Delta)^{s} u=\vartheta\frac{u}{|x|^{2s}}+u^{2_s^*-1},
~~~~~u\in \dot{H}^s(\mathbb{R}^{N}),
\end{equation}
where $\vartheta>0$ is sufficiently small and $N> 2s$, $0<s<1$, $2_s^*=\frac{2N}{N-2s}$.
They obtained the existence and asymptotic behavior of
positive solutions of problem (\ref{eq3}). For more related topics, we refer to \cite{CaoHan,FT,Guo,Han,Smets}
and references therein.

To the best of our knowledge, there are no results on nontrivial solution for problem (\ref{eq1})
 so far. Motivated by \cite{SDipierro,GUOLUN,Terracini} and the aforementioned works, in the
present paper, we will study the existence, symmetry and the asymptotic behavior of positive solutions for (\ref{eq1}).

Now, we are in the position to state the main results of this paper.

\begin{Theorem}\label{HC}
Let $N\geq 3$, $(N-4)_{+}<\alpha<N$ and $0<\vartheta<\frac{(N-2)^2}{4}$.
Then problem {\rm (\ref{eq1})} has a positive radially symmetric solution.
\end{Theorem}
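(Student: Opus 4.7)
The plan is to realize a positive radial solution as a minimizer of the Rayleigh quotient
\begin{equation*}
S_{\vartheta,\alpha} \;:=\; \inf_{u \in D^{1,2}_{\rm rad}(\mathbb{R}^N)\setminus\{0\}} \frac{\displaystyle\int_{\mathbb{R}^N}\!\left(|\nabla u|^2 - \frac{\vartheta}{|x|^2}u^2\right)\! dx}{\left(\displaystyle\int_{\mathbb{R}^N}\!\!\int_{\mathbb{R}^N} \frac{|u(x)|^{\bar p}|u(y)|^{\bar p}}{|x-y|^{N-\alpha}}\, dx\, dy\right)^{1/\bar p}}.
\end{equation*}
Because $0<\vartheta<(N-2)^2/4$, Hardy's inequality makes the numerator equivalent to $\|\nabla u\|_{L^2}^2$, so it defines a Hilbert norm on $D^{1,2}(\mathbb{R}^N)$. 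Since $\bar p\cdot\frac{2N}{N+\alpha}=2^{*}$, Theorem A combined with the Sobolev embedding shows that the denominator is finite on all of $D^{1,2}$. By the principle of symmetric criticality, any critical point obtained in the radial class yields a weak solution of (\ref{eq1}), and positivity of a minimizer is gained by passing to $|u|$, which has the same numerator and no smaller denominator.

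After normalizing the Choquard integral to $1$ and using the scale invariance $u\mapsto\lambda^{(N-2)/2}u(\lambda\,\cdot\,)$ to fix an optimal scale, I would extract a weakly convergent minimizing sequence $u_n\rightharpoonup u$ in $D^{1,2}_{\rm rad}$. The central difficulty is that (\ref{eq1}) is doubly critical: both the Hardy potential and the Choquard nonlinearity are invariant under the above scaling, and the Hardy potential breaks translation invariance while preserving the scale symmetry. In the radial class the only admissible defects of compactness are therefore concentration at the origin and escape to infinity, but a nonlocal concentration-compactness principle still has to be developed in order to track the mass $|u_n|^{\bar p}$ through the Choquard double integral in the presence of the $|x|^{-2}$ weight. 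This is facilitated by the change of variables $u(x)=|x|^{-(N-2)/2+\beta}v(x)$ with $\beta=\sqrt{(N-2)^2/4-\vartheta}$, which recasts (\ref{eq1}) as a nonlocal equation in a weighted Sobolev space where the Hardy potential is absorbed into the measure; this is the transformation alluded to in the abstract.

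With this framework in place, compactness of the minimizing sequence is obtained from a strict subadditivity inequality of the form $S_{\vartheta,\alpha}<S_{0,\alpha}$, where $S_{0,\alpha}$ is the best constant for the Hardy-free problem (\ref{eq4}) whose extremals are classified in \cite{GUOLUN}. The bound is verified by testing the Rayleigh quotient with the explicit Terracini profiles $U_{\vartheta}^{\varepsilon}$ from (\ref{SHS}) (or with their analogs provided by the transformation above), exploiting the fact that the Hardy term makes a negative contribution on these concentrating bubbles. The main obstacle, and where the bulk of the technical work will lie, is precisely this nonlocal concentration-compactness analysis: one must rule out both the infinitesimal bubble at $0$ and the escaping bubble at $\infty$ through the strict inequality while simultaneously handling the nonlocal character of the Choquard integral. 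Once a minimizer is produced, a Lagrange-multiplier rescaling yields a nontrivial nonnegative radial solution of (\ref{eq1}), and the strong maximum principle, applied to the transformed equation away from the Hardy singularity, upgrades nonnegativity to strict positivity on $\mathbb{R}^N\setminus\{0\}$.
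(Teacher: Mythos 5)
Your framework (radial minimizing sequence, symmetric criticality, passing to $|u|$, scale normalization, a nonlocal concentration-compactness lemma) matches the paper's scaffolding, and the change of variables $v=|x|^{\beta}u$ is indeed the transformation used later in the paper. However, the mechanism you propose for recovering compactness is not correct for this problem. You want to exclude concentration by proving a strict energy gap $S_{\vartheta,\alpha}<S_{0,\alpha}$ against the Hardy-free level. But the defects of compactness for a radial minimizing sequence here are a bubble collapsing at the origin or escaping to infinity, and because \emph{both} the Hardy term $\vartheta|x|^{-2}$ and the Choquard term are exactly invariant under the dilation $u\mapsto\lambda^{(N-2)/2}u(\lambda\,\cdot)$ centered at $0$, such a bubble carries energy level $S_{\vartheta}$ itself, not $S_{0}$. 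Thus $S_{\vartheta}<S_{0}$, while true, does not separate the minimizing level from the level at which compactness can fail; the usual ``strict inequality below the first noncompact level'' argument has nothing to bite on. (There is also no genuine bubble at a sphere $|x|=r>0$ in the radial class at the critical exponent, so comparing with $S_0$ is not the obstruction.)

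The paper closes this gap differently: after symmetrizing, it exploits the dilation freedom to \emph{normalize} the minimizing sequence so that exactly half the Choquard mass lies in $B_1(0)$, i.e.\ $\int_{B_1(0)}(I_{\alpha}\ast|u_n|^{\bar p})|u_n|^{\bar p}\,dx=\tfrac12$. The concentration-compactness lemma (Lemma~\ref{CCLMCH}) plus strict concavity of $t\mapsto t^{1/\bar p}$ then force the three pieces $\nu_\infty$, $\int d\nu$, $\int(I_\alpha*|u|^{\bar p})|u|^{\bar p}$ to each be $0$ or $1$; the normalization gives $\nu_\infty\le\tfrac12$ so $\nu_\infty=0$, and radial symmetry forces any atom of $\nu$ to sit at $0$, which contradicts the $\tfrac12$ normalization if $\int d\nu=1$. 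You need this (or an equivalent) scale-fixing device; testing with Terracini profiles to get $S_\vartheta<S_0$ will not do the job. The rest of your proposal (Lagrange multiplier, positivity via $|u|$ and the strong maximum principle) is fine.
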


To prove Theorem \ref{HC}, we consider a constrained minimization problem for
the functional
$$
\Phi (u)=\int_{\mathbb{R}^{N}}\left(|\nabla u|^2
 -\vartheta \frac{|u|^2}{|x|^2}\right)
  \mathrm{d}x
$$
on the space $D^{1,2}(\mathbb{R}^{N})$, restricted to the set $\mathcal{M}= \{u\in
 D^{1,2}(\mathbb{R}^{N}):\ \int_{\mathbb{R}^{N}}(I_{\alpha}\ast|u|^{\bar{p}})|u|^{\bar{p}}\mathrm{d}x=1 \}$.
In order to show that the minimizer can be achieved, we must overcome the main difficulty of the
lack of compactness. The Hardy term and the convolution term bring many obstacles
for recovering the compactness. We shall establish a suitable concentration-compactness lemma
to overcome this difficulty.

\begin{remark}\label{wsbs}
Suppose that $0 \leq u \in D^{1,2}(\mathbb{R}^{N})$ be a weak solution of {\rm (\ref{eq1})}. Since
$\bar{p}>2$, the Hardy-Littlewood-Sobolev inequality leads to
$$
\frac{\vartheta}{|x|^2}+(I_{\alpha}\ast|u|^{\bar{p}})|u|^{\bar{p}-2}\in L^{N/2}_{loc}(\mathbb{R}^{N}\setminus\{0\}).
$$
Then from the classical elliptic regularity theory, $u \in C^{2}(\mathbb{R}^{N}\setminus\{0\})$.
\end{remark}

\par\noindent
\begin{Theorem}\label{AB}
Let $N\geq 3$, $(N-4)_{+}<\alpha<N$ and $0<\vartheta<\frac{(N-2)^2}{4}$. Suppose that
$0 \lvertneqq u \in D^{1,2}(\mathbb{R}^{N})$ be a solution of {\rm (\ref{eq1})}. Then
there exist two positive constants $c$ and $C$ such that
\begin{equation}\label{APF}
\frac{c}{|x|^{\beta}\left(1+|x|^{2-\frac{4\beta}{N-2}}\right)^{\frac{N-2}{2}}}
\leq u(x) \leq \frac{C}{|x|^{\beta}\left(1+|x|^{2-\frac{4\beta}{N-2}}\right)^{\frac{N-2}{2}}},
~~~~~x\in\mathbb{R}^{N}\setminus\{0\},
\end{equation}
where
$$
\beta=\frac{N-2-\sqrt{(N-2)^2-4\vartheta}}{2}.
$$
\end{Theorem}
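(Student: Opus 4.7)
My approach rests on two structural symmetries of \eqref{eq1}. First, the equation is invariant under the Kelvin transform $\widetilde u(x):=|x|^{-(N-2)}u(x/|x|^2)$: both the Hardy operator $-\Delta-\vartheta/|x|^2$ and the Choquard nonlinearity at the upper critical exponent $\bar p=(N+\alpha)/(N-2)$ are conformally covariant, so the common factor $|x|^{-(N+2)}$ produced on each side cancels. Under this map the two exponents $\beta$ and $N-2-\beta$ are swapped and the profile in \eqref{APF} is preserved, so it suffices to establish
\begin{equation*}
c\,|x|^{-\beta}\;\le\; u(x)\;\le\; C\,|x|^{-\beta},\qquad 0<|x|\le 1;
\end{equation*}
the matching estimates on $\mathbb{R}^N\setminus B_1$ then follow by applying the same analysis to $\widetilde u$.

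Second, the algebraic identity $\beta(N-2-\beta)=\vartheta$ (equivalent to the definition of $\beta$) makes $|x|^{-\beta}$ a positive homogeneous zero of $-\Delta-\vartheta/|x|^2$. Setting $v(x):=|x|^\beta u(x)$ therefore eliminates the Hardy term and yields the weighted (Hardy-free) Choquard equation
\begin{equation*}
-\mathrm{div}\bigl(|x|^{-2\beta}\nabla v\bigr)=|x|^{-\beta\bar p}\bigl(I_\alpha\ast(|\cdot|^{-\beta\bar p} v^{\bar p})\bigr)v^{\bar p-1}\quad\text{on }\mathbb{R}^N\setminus\{0\},
\end{equation*}
which is exactly the weighted-Sobolev setting already used for the existence part of the paper. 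In these variables the desired bounds read $0<c\le v(x)\le C$ on $B_1$. I would obtain the upper bound by a Brezis--Kato / Moser iteration on this weighted equation: test against $\eta^2 v^{2q-1}$ for cutoffs $\eta$ localised near the origin; absorb the gradient term using the weighted Caffarelli--Kohn--Nirenberg embedding associated with the density $|x|^{-2\beta}\,dx$; and control the linear coefficient $|x|^{-\beta\bar p}(I_\alpha\ast(|\cdot|^{-\beta\bar p}v^{\bar p}))v^{\bar p-2}$ by the Hardy--Littlewood--Sobolev inequality (Theorem A) together with the global integrability $|\cdot|^{-\beta\bar p}v^{\bar p}=u^{\bar p}\in L^{2N/(N+\alpha)}(\mathbb{R}^N)$, which follows from $u\in L^{2^*}(\mathbb{R}^N)$. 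Local smallness over shrinking balls closes the iteration and yields $v\in L^\infty(B_1)$, that is $u(x)\le C|x|^{-\beta}$ on $B_1$.

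For the lower bound, Remark \ref{wsbs} together with the strong maximum principle gives $u>0$ on $\mathbb{R}^N\setminus\{0\}$, and I would then compare $u$ with $c|x|^{-\beta}$ on $B_1\setminus\{0\}$: since $(-\Delta-\vartheta|x|^{-2})(c|x|^{-\beta})=0\le(-\Delta-\vartheta|x|^{-2})u$ and the two homogeneous radial solutions of the Hardy operator are $|x|^{-\beta}$ and the more singular $|x|^{-(N-2-\beta)}$, choosing $c>0$ so that $c|x|^{-\beta}\le u$ on $\partial B_1$ and running a Phragm\'en--Lindel\"of argument on the punctured ball---with the singular branch excluded by the finite-energy condition $u\in D^{1,2}(\mathbb{R}^N)$---gives $u\ge c|x|^{-\beta}$ on $B_1$. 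A final application of the Kelvin transform closes \eqref{APF}. I expect the main obstacle to be the weighted Moser iteration, because both the density $|x|^{-2\beta}$ and the nonlocal coefficient degenerate at the origin: the usual $L^{N/2}$-smallness hypothesis on the linear potential has to be replaced by a weighted/HLS version, and the weighted Sobolev embedding must match the scaling of the problem precisely---which is possible only thanks to the specific choice $\beta=(N-2-\sqrt{(N-2)^2-4\vartheta})/2$.
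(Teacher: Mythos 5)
Your plan — substitute $v=|x|^{\beta}u$ using $\beta(N-2-\beta)=\vartheta$, run a Moser/Brezis--Kato iteration on the resulting weighted Choquard equation to get $v\in L^\infty$, and use the Kelvin transform to transfer estimates between the origin and infinity — coincides with the paper's (Corollary \ref{CIDTHD}, Lemmas \ref{Lemfcdjbh} and \ref{Linfesti}, with your ``weighted CKN embedding'' being Lemma \ref{SOBOINETG}, and the Kelvin step at the end of the proof of Theorem \ref{AB} via \cite[Lemma~2.1]{GUOLUN}). The genuine divergence is the lower bound near the origin. The paper develops a full weak Harnack inequality for the degenerate operator $-\Delta_\beta$ — a weighted Poincar\'e--Sobolev inequality (Proposition \ref{posoin}), local boundedness (Lemma \ref{sjgjloc}), a density estimate (Lemma \ref{lesgyjqfc}), the Krylov--Safonov covering argument (Lemma \ref{fugailyl}), culminating in Lemma \ref{wehncbequ}), all resting on the Muckenhoupt $A_2$ structure of $|x|^{-2\beta}$. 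You instead propose a Phragm\'en--Lindel\"of barrier comparison with the explicit homogeneous solutions $|x|^{-\beta}$ and $|x|^{-(N-2-\beta)}$, and this works with considerably less machinery: since $u$ is $C^2$ and strictly positive off the origin (Remark \ref{wsbs}, plus $-\Delta u\ge\vartheta|x|^{-2}u\ge 0$ and the strong maximum principle), one can set $c=\min_{\partial B_1}u>0$, take the exact radial Hardy solution $\phi_M(x)=c|x|^{-\beta}-M|x|^{-(N-2-\beta)}$, which goes to $-\infty$ at the origin because $N-2-\beta>\beta$, compare $u$ and $\phi_M$ on annuli $B_1\setminus\overline{B_\varepsilon}$ after the substitution $\zeta=|x|^{\beta}(u-\phi_M)$ (a drift Laplacian with no zeroth-order term, so the ordinary minimum principle applies), and then send $\varepsilon\to 0$ and $M\to 0$. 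One small inaccuracy in your framing: it is not the finite-energy condition $u\in D^{1,2}$ that ``excludes the singular branch'' — in the barrier argument the singular solution is only inserted with a small coefficient $M$ and removed at the end; $D^{1,2}$ is used for the regularity and the upper bound, not here. The paper's Harnack route is much heavier but yields a reusable supersolution estimate (Lemma \ref{wehncbequ}) that does not rely on knowing the homogeneous solutions explicitly, whereas your barrier argument is shorter and tailored to this specific operator.
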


In order to prove Theorem \ref{AB}, we first use the divergence theorem to convert equation (\ref{eq1})
into an equivalent equation in a weighted space; Then we give a regularity result for the new nonlocal
problem. Moreover, a weighted Harnack inequality is established following Di Benedetto and Trudinger's idea \cite{DTR}
by using a density lemma and the Krylov-Safonov covering theorem. Finally, we obtain the asymptotic behavior of the
positive solution of (\ref{eq1}) near the origin and infinity.

\begin{remark}\label{deve}
Comparing formulas {\rm (\ref{SHS})} and {\rm (\ref{APF})}, we see that Theorem \ref{AB} provides an
asymptotic property similar to that in {\rm \cite{Terracini}}.
\end{remark}

\begin{Theorem}\label{SYM}
Let $N\geq 3$, $(N-4)_{+}<\alpha<N$ and $0<\vartheta<\frac{(N-2)^2}{4}$. Suppose that
$0 \lvertneqq u \in D^{1,2}(\mathbb{R}^{N})$ be a solution of {\rm (\ref{eq1})}. Then
$u(x)$ is radially symmetric and monotone decreasing about the origin.
\end{Theorem}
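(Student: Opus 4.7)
The plan is to carry out the moving-plane method. Since the Hardy potential $\vartheta/|x|^2$ is singular only at the origin, the origin is the unique possible centre of radial symmetry, so we aim to show that $u$ is invariant under reflection across every hyperplane through $0$; radial symmetry and monotonicity will then follow. Fix a direction $e_1$, and for $\lambda \in \mathbb{R}$ set $\Sigma_\lambda = \{x : x_1 < \lambda\}$, $x^\lambda = (2\lambda - x_1, x_2, \ldots, x_N)$, $u_\lambda(x) = u(x^\lambda)$, and $w_\lambda = u_\lambda - u$. Because the Riesz kernel is reflection-invariant, subtraction yields
\begin{equation*}
-\Delta w_\lambda - \frac{\vartheta}{|x|^2}\, w_\lambda = \vartheta\Bigl(\frac{1}{|x^\lambda|^2} - \frac{1}{|x|^2}\Bigr) u_\lambda + (I_\alpha \ast u_\lambda^{\bar{p}})\, u_\lambda^{\bar{p}-1} - (I_\alpha \ast u^{\bar{p}})\, u^{\bar{p}-1}
\end{equation*}
on $\Sigma_\lambda$. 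The identity $|x|^2 - |x^\lambda|^2 = 4\lambda(x_1 - \lambda)$ shows that the Hardy-difference term is strictly positive for $\lambda < 0$ and $x \in \Sigma_\lambda$; this sign bonus is absent in the translation-invariant equation (\ref{eq4}) and is what ultimately drives the argument.

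To start the moving plane, test against $w_\lambda^- := \min\{w_\lambda, 0\}$ (extended by zero outside $\Sigma_\lambda$). Using Hardy's inequality on the left and the decomposition
\begin{equation*}
(I_\alpha \ast (u_\lambda^{\bar{p}} - u^{\bar{p}}))(x) = \int_{\Sigma_\lambda}\bigl[I_\alpha(x - y) - I_\alpha(x - y^\lambda)\bigr]\bigl[u_\lambda^{\bar{p}}(y) - u^{\bar{p}}(y)\bigr]\,dy,
\end{equation*}
for which $I_\alpha(x - y) \geq I_\alpha(x - y^\lambda)$ since $|x - y^\lambda|^2 - |x - y|^2 = 4(x_1 - \lambda)(y_1 - \lambda) \geq 0$ when $x, y \in \Sigma_\lambda$, together with the Hardy--Littlewood--Sobolev inequality and the decay $u(x) \leq C|x|^{\beta - (N-2)}$ from Theorem \ref{AB}, produces an estimate of the shape
\begin{equation*}
(1 - C\Phi(\lambda)) \int_{\Sigma_\lambda} |\nabla w_\lambda^-|^2\,dx \leq 0,
\end{equation*}
with $\Phi(\lambda) \to 0$ as $\lambda \to -\infty$. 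Hence $w_\lambda \geq 0$ in $\Sigma_\lambda$ for all sufficiently negative $\lambda$.

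Let $\Lambda := \sup\{\lambda \leq 0 : w_\mu \geq 0 \text{ in } \Sigma_\mu \text{ for all } \mu \leq \lambda\}$. Continuity gives $w_\Lambda \geq 0$ in $\Sigma_\Lambda$, and the strong maximum principle applied to the linear equation for $w_\Lambda$ (legitimate by the regularity noted in Remark \ref{wsbs}) upgrades this to $w_\Lambda > 0$. If $\Lambda < 0$, a narrow-region re-run of the same test-function argument on $\Sigma_{\Lambda + \delta} \setminus K$, for a compactum $K \Subset \Sigma_\Lambda$ on which $w_\Lambda$ is bounded below and $\delta > 0$ small, contradicts the maximality of $\Lambda$: Hardy--Littlewood--Sobolev controls the nonlocal contribution because $\Sigma_{\Lambda + \delta} \setminus K$ has small Lebesgue measure, and the positive Hardy-difference supplies the remaining slack. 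Hence $\Lambda = 0$, yielding $u(x^0) \geq u(x)$ for $x_1 < 0$. Replacing $e_1$ by $-e_1$ and repeating the entire argument gives the reverse inequality, so $u(x) = u(x^0)$; since $e_1$ was arbitrary, $u$ is radially symmetric about the origin, and the strict positivity of $w_\lambda$ at each $\lambda < 0$ in the sliding process delivers the monotone decrease. The main obstacle will be the narrow-strip continuation when $\Lambda < 0$: one must simultaneously control the nonlocal Choquard contribution and retain the positive Hardy margin uniformly as $\lambda \to \Lambda^+$, even though $\Sigma_\lambda$ still contains the reflected origin $(2\lambda, 0, \ldots, 0)$ near which $u_\lambda$ blows up — and it is precisely the sharp asymptotic estimate of Theorem \ref{AB} that makes this quantitative control possible.
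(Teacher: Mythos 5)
Your overall strategy is the moving-plane argument the paper also employs, and your opening step (Hardy's inequality, the kernel comparison $I_\alpha(x-y) \geq I_\alpha(x-y^\lambda)$ for $x,y \in \Sigma_\lambda$, and Hardy--Littlewood--Sobolev, yielding a margin $1 - 4\vartheta/(N-2)^2$ against a constant controlled by $|u|_{2^*,\Sigma_\lambda^u}$) mirrors the paper's derivation of (\ref{varfainetest})--(\ref{fsgjeinfainetest}). Two steps, however, are not correctly justified as written. First, when you apply the strong maximum principle at $\lambda = \Lambda$ to upgrade $w_\Lambda \geq 0$ to $w_\Lambda > 0$, you must separately exclude the alternative $w_\Lambda \equiv 0$ on $\Sigma_\Lambda$, and this is not automatic. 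The paper does so using Theorem \ref{AB}: for $\Lambda < 0$ the reflected origin $0_\Lambda = (2\Lambda, 0, \ldots, 0)$ lies in $\Sigma_\Lambda$, where $u_\Lambda$ blows up like $|x - 0_\Lambda|^{-\beta}$ while $u$ stays bounded, so $\lim_{x\to 0_\Lambda}(u_\Lambda(x) - u(x)) = +\infty$, which is incompatible with $u_\Lambda \equiv u$. You allude to the singularity in your closing sentence but never make it the mechanism that forecloses $w_\Lambda \equiv 0$; as it stands the maximum-principle step is incomplete. Second, the continuation justification ``because $\Sigma_{\Lambda+\delta}\setminus K$ has small Lebesgue measure'' is false: that set is unbounded in the $x_2,\ldots,x_N$ directions. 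What must be small is the $L^{2^*}$ mass of $u$ on $\Sigma_\lambda^u = \{x\in\Sigma_\lambda : u(x) > u_\lambda(x)\}$, which is the quantity that enters the Hardy--Littlewood--Sobolev constants in the test-function inequality. The paper obtains this by dominated convergence: $u_\Lambda > u$ on $\Sigma_\Lambda\setminus\{0_\Lambda\}$ forces $\chi_{\Sigma_\lambda^u}\to 0$ a.e.\ as $\lambda\to\Lambda^+$, hence $|u|_{2^*,\Sigma_\lambda^u}\to 0$ since $u\in L^{2^*}(\mathbb{R}^N)$. Your compactum device is salvageable if $K$ is chosen to absorb all but $\epsilon$ of $|u|_{2^*}^{2^*}$, but the measure-theoretic claim as written does not support the estimate.

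A minor remark on emphasis: you describe the positive sign of $\vartheta\bigl(|x^\lambda|^{-2} - |x|^{-2}\bigr) u_\lambda$ on $\Sigma_\lambda$ as ``what ultimately drives the argument,'' but in the paper that favorable term is simply discarded in the passage from (\ref{eguotest}) to (\ref{varfainetest}). The actual engine is Hardy's inequality (providing the positive coefficient $1 - 4\vartheta/(N-2)^2$) together with the smallness of $|u|_{2^*,\Sigma_\lambda^u}$; the Hardy-difference sign is harmless but not load-bearing.
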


We prove Theorem \ref{SYM} by the moving plane method, which was invented by Alexandrov in the
1950s and has been developed by many researcheres (see \cite{CaGiSp,CLO,Serrin}). For example,
in \cite{CLO}, using this method in integral form to integral equations,
 Chen et al. obtained the symmetry and monotonicity of the solution. The homogeneity of Hardy potential
makes it difficult to directly use the moving plane method in integral form. Thanks to the singularity
of the solution at the origin, we can use the strong maximum principle to overcome this difficulty.

\vskip 1mm
The rest of the paper is organized as follows. In section 2, by establishing
a nonlocal version of concentration-compactness principle involving Hardy term, we are
able to obtain the existence result.
In section 3, the regularity of the solution to the equivalent equation is proved for the first time.
In addition, we prove the weak Harnack inequality by establishing a weighted Poincar\'{e}
inequality. And the asymptotic behavior is obtained. In section 4, via the moving plane method,
the radial symmetry and monotonicity of the solution are proved.

Throughout this paper, we use $c_1, c_2,\cdots$ or $C_1, C_2,\cdots$ to denote
different positive constants.

\section{Existence of extremal functions and proof of Theorem \ref{HC}}
In this section, we give the proof of Theorem \ref{HC}. To prove the existence of a solution
to (\ref{eq1}) we consider the following minimization problem
$$
S_{\vartheta}:=\inf\left\{\int_{\mathbb{R}^{N}}\left(|\nabla u|^2
 -\vartheta \frac{|u|^2}{|x|^2}\right)
  \mathrm{d}x:u\in D^{1,2}(\mathbb{R}^{N}),
~\int_{\mathbb{R}^{N}}(I_{\alpha}\ast|u|^{\bar{p}})|u|^{\bar{p}}\mathrm{d}x=1\right\}.
$$
Using symmetrization techniques, we have
\begin{lemma}\label{SYJXHXL} \  There exists a sequence $\{v_{n}\}\subset D^{1,2}(\mathbb{R}^{N})$
that consist of radial and radially decreasing functions such that
 \begin{equation}\label{minipro}
\int_{\mathbb{R}^{N}}(I_{\alpha}\ast|v_{n}|^{\bar{p}})|v_{n}|^{\bar{p}}\mathrm{d}x=1,
~~~~~~\int_{\mathbb{R}^{N}}\left(|\nabla v_{n}|^2
 -\vartheta \frac{|v_{n}|^2}{|x|^2}\right)
  \mathrm{d}x\rightarrow S_{\vartheta}, ~~~n\rightarrow\infty.
 \end{equation}
\end{lemma}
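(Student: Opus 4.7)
The plan is to take an arbitrary minimizing sequence for $S_\vartheta$, apply Schwarz symmetrization, and then rescale by a positive scalar to restore the constraint. Concretely, pick $\{u_n\}\subset D^{1,2}(\mathbb{R}^{N})$ with $\int_{\mathbb{R}^{N}}(I_{\alpha}\ast|u_n|^{\bar{p}})|u_n|^{\bar{p}}\mathrm{d}x=1$ and $\Phi(u_n)\to S_\vartheta$, and let $u_n^{*}$ denote the symmetric decreasing rearrangement of $|u_n|$, which is radial and radially nonincreasing and remains in $D^{1,2}(\mathbb{R}^{N})$.

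The key step is to invoke three classical rearrangement facts. First, by P\'olya--Szeg\H{o}, $\int_{\mathbb{R}^{N}}|\nabla u_n^{*}|^2\mathrm{d}x\leq \int_{\mathbb{R}^{N}}|\nabla u_n|^2\mathrm{d}x$. Second, since the weight $1/|x|^2$ coincides with its own symmetric decreasing rearrangement, the Hardy--Littlewood inequality gives $\int_{\mathbb{R}^{N}}\frac{|u_n^{*}|^2}{|x|^2}\mathrm{d}x\geq \int_{\mathbb{R}^{N}}\frac{|u_n|^2}{|x|^2}\mathrm{d}x$. Third, using $(|u_n|^{\bar p})^{*}=|u_n^{*}|^{\bar p}$ together with the Riesz rearrangement inequality applied to the kernel $|x-y|^{-(N-\alpha)}$, one obtains $\int_{\mathbb{R}^{N}}(I_\alpha\ast|u_n^{*}|^{\bar p})|u_n^{*}|^{\bar p}\mathrm{d}x \geq \int_{\mathbb{R}^{N}}(I_\alpha\ast|u_n|^{\bar p})|u_n|^{\bar p}\mathrm{d}x=1$. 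Combining the first two inequalities yields $\Phi(u_n^{*})\leq \Phi(u_n)$.

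To restore the equality in the constraint, set $A_n:=\int_{\mathbb{R}^{N}}(I_\alpha\ast|u_n^{*}|^{\bar p})|u_n^{*}|^{\bar p}\mathrm{d}x\geq 1$ and define $v_n:=A_n^{-1/(2\bar p)}u_n^{*}$. Then $v_n$ is radial and radially nonincreasing, lies in the constraint set, and $\Phi(v_n)=A_n^{-1/\bar p}\Phi(u_n^{*})$. Because $0<\vartheta<(N-2)^2/4$, the classical Hardy inequality shows that $\Phi\geq 0$ on $D^{1,2}(\mathbb{R}^{N})$, so $\Phi(u_n^{*})\geq 0$ and the factor $A_n^{-1/\bar p}\leq 1$ can only decrease it; hence $\Phi(v_n)\leq \Phi(u_n^{*})\leq \Phi(u_n)$. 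Since $v_n$ is admissible we also have $\Phi(v_n)\geq S_\vartheta$, and the sandwich with $\Phi(u_n)\to S_\vartheta$ forces $\Phi(v_n)\to S_\vartheta$, proving (\ref{minipro}).

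I do not expect a serious obstacle here. The only points that need care are the routine verifications that Schwarz symmetrization preserves $D^{1,2}(\mathbb{R}^{N})$, that each of the three rearrangement inequalities above is valid in the generality needed, and that $\Phi$ is indeed nonnegative on the whole space so that the scaling by $t_n=A_n^{-1/(2\bar p)}\in(0,1]$ works in the right direction; all three are classical and covered, for instance, in \cite{LIEBLOSS}.
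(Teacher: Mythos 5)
Your proposal is correct and follows the same route as the paper: Schwarz symmetrization, the same three rearrangement inequalities (P\'olya--Szeg\H{o}, the Hardy--Littlewood inequality for the $|x|^{-2}$ weight, and Riesz for the convolution term), then rescaling by $A_n^{-1/(2\bar p)}$ (the paper's $\lambda_n^{-1}$) to restore the constraint, and sandwiching using $\Phi\ge 0$. The only difference is cosmetic: you make explicit the nonnegativity of $\Phi$ needed to justify that the rescaling does not increase the energy, a point the paper uses silently.
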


\begin{proof}[\bf Proof.]
Let $\{u_{n}\}\subset D^{1,2}(\mathbb{R}^{N})$ be a minimizing sequence satisfying
$$
\int_{\mathbb{R}^{N}}(I_{\alpha}\ast|u_{n}|^{\bar{p}})|u_{n}|^{\bar{p}}\mathrm{d}x=1,
~~~~~~\int_{\mathbb{R}^{N}}\left(|\nabla u_{n}|^2
 -\vartheta \frac{|u_{n}|^2}{|x|^2}\right)
  \mathrm{d}x\rightarrow S_{\vartheta}, ~~~n\rightarrow\infty.
$$
By Riesz's rearrangement inequality \cite{EHLI}, one has
$$
\lambda_n:=\left(\int_{\mathbb{R}^{N}}(I_{\alpha}\ast|u_{n}^*|^{\bar{p}})|u_{n}^*|^{\bar{p}}\mathrm{d}x\right)
^{1/2\bar{p}}\geq \left(\int_{\mathbb{R}^{N}}(I_{\alpha}\ast|u_{n}|^{\bar{p}})|u_{n}|^{\bar{p}}\mathrm{d}x\right)
^{1/2\bar{p}}=1,
$$
where $u_{n}^*$ be the symmetric decreasing rearrangement of $u_{n}$. Define $v_n:=\lambda_n^{-1}u_n^*$, we have
$$\int_{\mathbb{R}^{N}}(I_{\alpha}\ast|v_{n}|^{\bar{p}})|v_{n}|^{\bar{p}}\mathrm{d}x=1.$$
By using the symmetric rearrangement inequality \cite{LIEBLOSS}, we get
$$\int_{\mathbb{R}^{N}}\frac{|u_{n}|^2}{|x|^2}
  \mathrm{d}x \leq \int_{\mathbb{R}^{N}}\frac{|u_{n}^*|^2}{|x|^2}
  \mathrm{d}x.
$$
Thanks to the P\'{o}lya-Szeg\"{o} inequality, we obtain
 \begin{equation*}
 \aligned
 S_{\vartheta}&= S_{\vartheta}\left(\int_{\mathbb{R}^{N}}(I_{\alpha}\ast|v_{n}|^{\bar{p}})|v_{n}|^{\bar{p}}\mathrm{d}x
 \right)^{1/\bar{p}}\leq \lambda_n^{-2}\int_{\mathbb{R}^{N}}\left(|\nabla u_{n}^*|^2
 -\vartheta \frac{|u_{n}^*|^2}{|x|^2}\right)\mathrm{d}x
  \\&\leq \lambda_n^{-2}\int_{\mathbb{R}^{N}}\left(|\nabla u_{n}|^2
 -\vartheta \frac{|u_{n}|^2}{|x|^2}\right) \mathrm{d}x
 \leq \int_{\mathbb{R}^{N}}\left(|\nabla u_{n}|^2
 -\vartheta \frac{|u_{n}|^2}{|x|^2}\right) \mathrm{d}x.
 \endaligned
\end{equation*}
Thus,
$$
\lambda_n \rightarrow 1,~~~~~\int_{\mathbb{R}^{N}}\left(|\nabla v_{n}|^2
 -\vartheta \frac{|v_{n}|^2}{|x|^2}\right)
  \mathrm{d}x\rightarrow S_{\vartheta}.
$$
\end{proof}

\begin{lemma}\label{SLDL} \  Let $0\leq\xi \in C_{c}^{1}(\mathbb{R}^{N})$. If
$\omega_n\rightharpoonup0~\mbox{in}~D^{1,2}(\mathbb{R}^{N})$, then there holds
\begin{equation*}
  |I_{\alpha}\ast (\xi|\omega_n|^{\bar{p}})-(I_{\alpha}\ast |\omega_n|^{\bar{p}})\xi|_\frac{2N}{N-\alpha}=o(1).
\end{equation*}
\end{lemma}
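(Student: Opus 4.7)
Write $h_n := |\omega_n|^{\bar p}$ and set
\[
F_n(x) := I_\alpha \ast(\xi h_n)(x) - \xi(x)\,(I_\alpha\ast h_n)(x) = \int_{\mathbb{R}^N} I_\alpha(x-y)[\xi(y)-\xi(x)]h_n(y)\,\mathrm{d}y;
\]
the goal is $|F_n|_{\frac{2N}{N-\alpha}} \to 0$. I argue by a subsequence contradiction: if it fails, pass to a subsequence with $|F_n|_{\frac{2N}{N-\alpha}} \ge \varepsilon_0 > 0$, and along a further subsequence obtain $\omega_n \to 0$ a.e.\ (from $D^{1,2}\hookrightarrow L^{2^*}$ combined with the Rellich compact embedding). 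Then $h_n \to 0$ a.e., and the uniform bound in $L^{\frac{2N}{N+\alpha}}$ upgrades this to $h_n \rightharpoonup 0$ weakly in $L^{\frac{2N}{N+\alpha}}$. Since $\bar p < 2^*$ (as $\alpha < N$), Rellich further yields $h_n \to 0$ strongly in $L^s_{\mathrm{loc}}$ for every $s < \frac{2N}{N+\alpha}$.

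Fix $R > 0$ with $\mathrm{supp}\,\xi \subset B_R$ and split the $x$-space as $\{|x|\ge 2R\} \cup \{|x|\le 2R\}$, with the inner piece further split in $y$ into $\{|y|\le 4R\}$ (near) and $\{|y|\ge 4R\}$ (far). On $\{|x|\ge 2R\}$ one has $\xi(x)=0$ and $|x-y|\ge |x|/2$ for $y \in B_R$, so $|F_n(x)| \le C|x|^{\alpha-N}\int \xi h_n$, and its $L^{\frac{2N}{N-\alpha}}$-norm is dominated by $C'\int \xi h_n \to 0$ by Rellich. On the inner-$x$, far-$y$ piece, $\xi(y)=0$ and $|x-y|\ge |y|/2$, so
\[
|F_n^{\mathrm{far}}(x)| \le C|\xi(x)|\int_{|y|\ge 4R}|y|^{\alpha-N} h_n(y)\,\mathrm{d}y,
\]
and since $|y|^{\alpha-N}\mathbf{1}_{\{|y|\ge 4R\}} \in L^{\frac{2N}{N-\alpha}}$, the $y$-integral tends to zero by $h_n \rightharpoonup 0$; uniform boundedness on $B_{2R}$ plus dominated convergence finishes this piece.

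The core estimate is on the inner-$x$, near-$y$ region, where the $C^1$-gain $|\xi(y)-\xi(x)| \le L|x-y|$ delivers
\[
|F_n^{\mathrm{near}}(x)| \le CL\int_{B_{4R}} |x-y|^{\alpha+1-N} h_n(y)\,\mathrm{d}y.
\]
If $\alpha<N-1$, the right-hand side is a Riesz potential of order $\alpha+1$ applied to $h_n\mathbf{1}_{B_{4R}}$, and the Hardy-Littlewood-Sobolev inequality bounds it in $L^{\frac{2N}{N-\alpha}}$ by $C\,|h_n\mathbf{1}_{B_{4R}}|_{\frac{2N}{N+\alpha+2}}$, which tends to zero because $\frac{2N}{N+\alpha+2}<\frac{2N}{N+\alpha}$ and $h_n\to 0$ locally in every subcritical Lebesgue space. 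If $\alpha\ge N-1$, the kernel is bounded on $B_{2R}\times B_{4R}$ and a direct Hölder argument again reduces matters to $\int_{B_{4R}} h_n \to 0$.

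The main obstacle is the criticality of the HLS pairing $\bigl(\frac{2N}{N+\alpha},\frac{2N}{N-\alpha}\bigr)$: at this endpoint, weak convergence of $h_n$ does not upgrade to strong convergence of $I_\alpha \ast h_n$. The commutator structure $\xi(y)-\xi(x)$ combined with the $C^1$-regularity of $\xi$ supplies exactly one extra order of smoothing, effectively replacing $I_\alpha$ by $I_{\alpha+1}$ on the near-diagonal region; this subcritical potential then couples with the strict inequality $\bar p < 2^*$ (Rellich) to close the argument.
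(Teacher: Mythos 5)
Your proof is correct, and it takes a genuinely different route from the paper's. Both arguments hinge on the same key observation---the $C^1$-gain $|\xi(y)-\xi(x)|\leq L|x-y|$ upgrades the kernel from $|x-y|^{\alpha-N}$ to $|x-y|^{\alpha+1-N}$ near the diagonal---but the machinery that converts this gain into convergence differs. The paper encodes the gain in a single modified kernel $Q(x)$ (equal to $|x|^{\alpha+1-N}$ for $|x|\leq 1$ and $|x|^{\alpha-N}$ for $|x|>1$), then runs a ``uniform integrability'' scheme: Young's inequality with $Q\in L^{N/(N-\alpha-\varepsilon)}$ gives a uniform bound on $\varphi_n$ in the strictly better space $L^{2N/(N-\alpha-2\varepsilon)}$, weak convergence arguments give $\varphi_n\to 0$ a.e., these two together give $L^{2N/(N-\alpha)}_{\mathrm{loc}}$ convergence, and a uniform-in-$n$ tail estimate (obtained by splitting $I_\alpha$ into short- and long-range parts) closes the argument. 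You instead decompose the $(x,y)$-domain into three regions, treat each piece by a direct estimate (Rellich for the far-$x$ piece, weak $L^{2N/(N+\alpha)}$ convergence against a fixed dual test function for the inner-$x$/far-$y$ piece, and Hardy--Littlewood--Sobolev at the subcritical order $\alpha+1$ for the near-diagonal piece), and never need the a.e.\ convergence of $\varphi_n$ itself nor the epsilon-of-room Young's inequality. Your approach is arguably more transparent about where each convergence mechanism is used, at the cost of the case split $\alpha\lessgtr N-1$, which the paper's choice of $Q$ handles uniformly. Both are complete proofs.
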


\begin{proof}[\bf Proof.]
For $x\in \mathbb{R}^{N}$, we write
\begin{equation*}
 \aligned
 \varphi_{n}(x)&=I_{\alpha}\ast (\xi|\omega_n|^{\bar{p}})-(I_{\alpha}\ast |\omega_n|^{\bar{p}})\xi
 =\int_{\mathbb{R}^{N}}\frac{|\omega_n(y)|^{\bar{p}}(\xi(y)-\xi(x))}{|x-y|^{N-\alpha}}
  \mathrm{d}y
 \\&=:\int_{\mathbb{R}^{N}}|\omega_n(y)|^{\bar{p}}T(x,y)
  \mathrm{d}y.
 \endaligned
\end{equation*}
Let
$$
Q(x)=\left\{
 \begin{aligned}
 & \frac{1}{|x|^{N-\alpha-1}},& |x|\leq 1; \\
 & \frac{1}{|x|^{N-\alpha}},& |x|>1.\\
 \end{aligned}
 \right.
$$
Since $\xi \in C_{c}^{1}(\mathbb{R}^{N})$, we see that $|T(x,y)|\leq C Q(x-y)$, and so
$|\varphi_n|\leq C(Q\ast |\omega_n|^{\bar{p}})$. Note that $Q\in L^{\frac{N}{N-\alpha-\varepsilon}}(\mathbb{R}^{N})$
for $\varepsilon>0$ sufficiently small. We deduce from Young's inequality that
 \begin{equation}\label{esin}
 |\varphi_n|_\frac{2N}{N-\alpha-2\varepsilon}\leq
 C |Q\ast |\omega_n|^{\bar{p}}|_\frac{2N}{N-\alpha-2\varepsilon}\leq
 C |Q|_\frac{N}{N-\alpha-\varepsilon}|\omega_n|_{2^*}^{\bar{p}}\leq
 C.
 \end{equation}
From $\omega_n\rightharpoonup0~\mbox{in}~D^{1,2}(\mathbb{R}^{N})$, we see up to a subsequence that
$\omega_n \rightarrow 0~\mbox{a.e.~on}~\mathbb{R}^N$ and the sequence $\{|\omega_n|^{\bar{p}}\}$
is bounded in $L^{\frac{2N}{N+\alpha}}(\mathbb{R}^{N})$.
Hence, $|\omega_n|^{\bar{p}}\rightharpoonup0\
\mbox{in}~ L^{\frac{2N}{N+\alpha}}(\mathbb{R}^{N})$.
Then by the Hardy-Littlewood-Sobolev inequality,
we have that
$
I_{\alpha}\ast |\omega_n|^{\bar{p}}\rightharpoonup0\
\mbox{in}~ L^{\frac{2N}{N-\alpha}}(\mathbb{R}^{N}).
$
Note that $I_{\alpha}\ast |\omega_n|^{\bar{p}}\geq 0$.
Therefore, for every $R>0$,
$$
\int_{B_{R}(0)} I_{\alpha}\ast |\omega_n|^{\bar{p}}\mathrm{d}x
=\int_{\mathbb{R}^{N}}( I_{\alpha}\ast |\omega_n|^{\bar{p}}) \chi_{B_{R}(0)}\mathrm{d}x
\rightarrow 0~~~\mbox{as}~n\rightarrow \infty.$$
Namely, $I_{\alpha}\ast |\omega_n|^{\bar{p}}\rightarrow0\ \mbox{in}\ L^{1}_{loc}(\mathbb{R}^{N})$.
Up to a subsequence, one gets
$I_{\alpha}\ast |\omega_n|^{\bar{p}}\rightarrow0$
a.e. on $\mathbb{R}^{N}$.
Similarly, $I_{\alpha}\ast (\xi|\omega_n|^{\bar{p}})\rightarrow0$
a.e. on $\mathbb{R}^{N}$.
Hence, from (\ref{esin}), we have that
\begin{equation}\label{LOCC}
 \varphi_{n}\rightarrow0~~~
\mbox{in}~ L^{\frac{2N}{N-\alpha}}_{loc}(\mathbb{R}^{N}).
\end{equation}
Suppose that $supp \xi \subset B_{r}(0)$. For $R>r$, let $H_{R}(x)=I_{\alpha} \chi_{B_{R-r}^{c}(0)}$.
It follows from Young's inequality and the H\"{o}lder inequality that
 \begin{equation*}
 \aligned
 |I_{\alpha}\ast (\xi|\omega_n|^{\bar{p}})|_{\frac{2N}{N-\alpha},B_R^{c}(0)}
 &\leq |H_R\ast (\xi|\omega_n|^{\bar{p}})|_{\frac{2N}{N-\alpha}}
  \\& \leq |H_R|_{\frac{2N}{N-\alpha}} |\xi|\omega_n|^{\bar{p}}|_{1}
  \\& \leq C |H_R|_{\frac{2N}{N-\alpha}}|\omega_n|_{2^*}^{\bar{p}}r^{\frac{N-\alpha}{2}}.
 \endaligned
\end{equation*}
Since $\lim\limits_{R\rightarrow\infty}|H_R|_{\frac{2N}{N-\alpha}}=0$, we see that
$$
\lim\limits_{R\rightarrow\infty} |I_{\alpha}\ast
(\xi|\omega_n|^{\bar{p}})|_{\frac{2N}{N-\alpha},B_R^{c}(0)}=0~~~\mbox{uniformly~ for}~n \in \mathbb{N}.
$$
Observe that $|(I_{\alpha}\ast
|\omega_n|^{\bar{p}})\xi|_{\frac{2N}{N-\alpha},B_R^{c}(0)}=0$. Thus,
$$
\lim\limits_{R\rightarrow\infty} |\varphi_n|_{\frac{2N}{N-\alpha},B_R^{c}(0)}=0~~~\mbox{uniformly~ for}~n \in \mathbb{N},
$$
which, together with (\ref{LOCC}), yields the conclusion.
\end{proof}

We recall the second concentration-compactness principle of Lions.
\begin{lemma}\label{MESLM} {\rm(\cite{PLLIONSLIM})} \ Let $\mu$, $\nu$ be two bounded nonnegative measures on
$\mathbb{R}^{N}$ satisfying for some constant $C\geq 0$,
$$
\left(\int_{\mathbb{R}^{N}}|\xi|^{q}
  \mathrm{d}\nu\right)^{1/q}
  \leq C\left(\int_{\mathbb{R}^{N}}|\xi|^{p}
  \mathrm{d}\mu\right)^{1/p},~~~\forall~\xi\in  C_{c}^{\infty}(\mathbb{R}^{N}),
$$
where $1\leq p <q\leq \infty$. Then there exist an at most countable index set $\Lambda$,
sequences $\{x_i\}\subset \mathbb{R}^{N}$, $\{\nu_i\}\subset (0,\infty)$,
such that
$$
\nu=\sum_{i\in \Lambda}\nu_i \delta_{x_i},~~~\mu\geq C^{-p}\sum_{i\in \Lambda}\nu_i^{p/q} \delta_{x_i},
$$
where $\delta_{x_i}$ is the Dirac mass at $x_i \in \mathbb{R}^{N}$.
\end{lemma}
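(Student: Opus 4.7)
My plan is to exploit the reverse-H\"{o}lder-type inequality (which is reverse because $p<q$) to force $\nu$ to be purely atomic and to anchor each atom to a point carrying positive $\mu$-mass. First I would upgrade the hypothesis from $\xi\in C_{c}^{\infty}(\mathbb{R}^{N})$ to characteristic functions of Borel sets via a standard Urysohn-and-regularity argument: given a compact $K$ and an open neighborhood $U\supset K$, pick $\xi\in C_{c}^{\infty}$ with $\chi_{K}\le\xi\le\chi_{U}$, plug it into the hypothesis, and use inner regularity of $\nu$ together with outer regularity of $\mu$ to obtain
$$
\nu(E)^{1/q}\le C\,\mu(E)^{1/p}\qquad\text{for every Borel set }E\subset\mathbb{R}^{N}.
$$
Because $q>p$, this at once forces $\nu\ll\mu$, and the Radon--Nikodym theorem produces $f\in L^{1}(\mathrm{d}\mu)$ with $\nu=f\,\mu$.

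Next I would specialize the set inequality to balls to get
$$
\frac{\nu(B_{r}(x))}{\mu(B_{r}(x))}\le C^{q}\,\mu(B_{r}(x))^{q/p-1}.
$$
The Besicovitch differentiation theorem for Radon measures on $\mathbb{R}^{N}$ shows the left-hand side tends to $f(x)$ at $\mu$-a.e.\ $x$; at any $x$ that is not an atom of $\mu$, $\mu(B_{r}(x))\to 0$ and, since $q/p-1>0$, the right-hand side tends to $0$. Hence $f$ is $\mu$-supported on the atomic set of $\mu$, which is at most countable because $\mu$ is finite. Labelling those atoms as $\{x_{i}\}_{i\in\Lambda}$ and setting $\nu_{i}:=\nu(\{x_{i}\})$, I conclude $\nu=\sum_{i\in\Lambda}\nu_{i}\delta_{x_{i}}$.

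The reverse bound on $\mu$ I would secure by testing the original inequality against bumps $\xi_{\epsilon}\in C_{c}^{\infty}$ with $0\le\xi_{\epsilon}\le 1$, $\xi_{\epsilon}(x_{i})=1$ and support in $B_{\epsilon}(x_{i})$. This gives $\nu_{i}^{1/q}\le C\,\mu(B_{\epsilon}(x_{i}))^{1/p}$; sending $\epsilon\to 0$ and using outer regularity of $\mu$ yields $\mu(\{x_{i}\})\ge C^{-p}\nu_{i}^{p/q}$, and therefore $\mu\ge C^{-p}\sum_{i\in\Lambda}\nu_{i}^{p/q}\delta_{x_{i}}$.

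The main obstacle is the differentiation step: $\mu$ is only assumed to be a bounded Radon measure and need not be doubling, so the classical Lebesgue--Vitali argument does not apply verbatim. I would resolve this by appealing to the Besicovitch covering theorem, which grants the required Lebesgue-type differentiation theorem on $\mathbb{R}^{N}$ without any doubling hypothesis on $\mu$. Once that tool is in hand, every other step reduces to a routine density/approximation argument, and the case $q=\infty$ is handled analogously with the factor $\mu(B_{r}(x))^{q/p-1}$ replaced by $\mu(B_{r}(x))^{-1}$.
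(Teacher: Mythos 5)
The paper does not supply a proof of this lemma at all---it is quoted verbatim with the citation \cite{PLLIONSLIM}---so there is no in-paper argument to compare against. On its own merits, your proof is correct and is essentially the standard differentiation-of-measures argument for Lions' second concentration--compactness lemma: upgrade the smooth test functions to Borel sets via inner/outer regularity of Radon measures, deduce $\nu\ll\mu$, apply the Besicovitch differentiation theorem (which you rightly invoke precisely because $\mu$ need not be doubling) to kill the Radon--Nikodym density off the atomic set of $\mu$, and finally test with shrinking bumps to extract the lower bound $\mu(\{x_i\})\geq C^{-p}\nu_i^{p/q}$.

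Two small remarks. First, the implication $\nu\ll\mu$ already follows from the set inequality $\nu(E)^{1/q}\leq C\mu(E)^{1/p}$ for \emph{any} $p,q$; the strict inequality $q>p$ is only needed one step later, to guarantee $\mu(B_r(x))^{q/p-1}\to 0$ at non-atoms. Second, your description of the endpoint $q=\infty$ is garbled: there the set inequality degenerates to ``$\nu(E)>0\Rightarrow\mu(E)\geq C^{-p}$,'' and one concludes directly that $\nu$ cannot charge a neighborhood of any non-atom of $\mu$, without ever forming the quotient $\nu(B_r)/\mu(B_r)$; one also observes that $\nu_i^{p/q}=1$ forces $\Lambda$ to be finite since $\mu$ is bounded. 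Neither remark affects the case $q<\infty$, which is the one the paper actually uses (with $p=2$, $q=2\bar p$).
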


Inspired by the ideas in \cite{Chabrowski,Willem}, we can establish the following
concentration-compactness lemma, which will play a crucial role
in proving the existence of positive solutions for (\ref{eq1}).

\begin{lemma}\label{CCLMCH} \  Let $\{u_{n}\}\subset D^{1,2}(\mathbb{R}^{N})$ be a
sequence such that
  $$u_n\rightharpoonup u~~~\mbox{in}~D^{1,2}(\mathbb{R}^{N}),~~~~~~
  u_n \rightarrow u~~~\mbox{a.e.~on}~\mathbb{R}^N.$$
Assume that
$$|\nabla (u_{n}-u)|^2
 -\vartheta \frac{|u_{n}-u|^2}{|x|^2}\rightharpoonup \mu~~~\mbox{and}~~~(I_{\alpha}\ast
|u_{n}-u|^{\bar{p}})|u_{n}-u|^{\bar{p}} \rightharpoonup \nu$$
weakly in the sense of measure and define
 $$\mu_\infty=\lim\limits_{R\rightarrow\infty}
 \varlimsup\limits_{n\rightarrow\infty}
  \int_{|x|\geq R}\left(|\nabla u_{n}|^2
 -\vartheta \frac{|u_{n}|^2}{|x|^2}\right) \mathrm{d}x$$
and
 $$\nu_\infty=\lim\limits_{R\rightarrow\infty}
 \varlimsup\limits_{n\rightarrow\infty}\int_{|x|\geq R}(I_{\alpha}\ast
|u_{n}|^{\bar{p}})|u_{n}|^{\bar{p}}\mathrm{d}x.$$
Then there exist an at most countable index set $\Lambda$,
sequences $\{x_i\}\subset \mathbb{R}^{N}$, $\{\nu_i\}\subset (0,\infty)$,
such that
\begin{equation} \label{HMINE1}
\nu=\sum_{i\in \Lambda}\nu_i \delta_{x_i},~~~\mu\geq S_{\vartheta}\sum_{i\in \Lambda}\nu_i^{1/\bar{p}} \delta_{x_i},
  \end{equation}
\begin{equation} \label{HMINE2}
\mu_\infty\geq S_{\vartheta}\nu_\infty^{1/\bar{p}},
  \end{equation}
\begin{equation} \label{HMINE3}
\mu_\infty+\int_{\mathbb{R}^{N}}\mathrm{d}\mu
+\int_{\mathbb{R}^{N}}\left(|\nabla u|^2
 -\vartheta \frac{|u|^2}{|x|^2}\right) \mathrm{d}x=
 \varlimsup\limits_{n\rightarrow\infty}
  \int_{\mathbb{R}^{N}}\left(|\nabla u_{n}|^2
 -\vartheta \frac{|u_{n}|^2}{|x|^2}\right) \mathrm{d}x
  \end{equation}
and
\begin{equation} \label{HMINE4}
\nu_\infty+\int_{\mathbb{R}^{N}}\mathrm{d}\nu
+\int_{\mathbb{R}^{N}}(I_{\alpha}\ast
|u|^{\bar{p}})|u|^{\bar{p}} \mathrm{d}x=
 \varlimsup\limits_{n\rightarrow\infty}
  \int_{\mathbb{R}^{N}}(I_{\alpha}\ast
|u_{n}|^{\bar{p}})|u_{n}|^{\bar{p}} \mathrm{d}x,
  \end{equation}
where $\delta_{x_i}$ is the Dirac mass at $x_i \in \mathbb{R}^{N}$.
\end{lemma}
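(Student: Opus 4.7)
Write $\omega_n:=u_n-u$. Then $\omega_n\rightharpoonup 0$ in $D^{1,2}(\mathbb{R}^N)$, $\omega_n\to 0$ in $L^2_{\mathrm{loc}}(\mathbb{R}^N)$ by Rellich-Kondrachov, and $\omega_n\to 0$ almost everywhere. The plan is to follow Lions' second concentration-compactness scheme, using the Hardy-Sobolev-HLS quotient $S_\vartheta$ as a test-function inequality and Lemma~\ref{SLDL} to pull cutoffs past the Riesz convolution.

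For a nonnegative $\xi\in C_c^\infty(\mathbb{R}^N)$, the first step is to insert $\xi\omega_n\in D^{1,2}(\mathbb{R}^N)$ into the Rayleigh quotient defining $S_\vartheta$. Applying Lemma~\ref{SLDL} with $\xi^{\bar p}\in C_c^1(\mathbb{R}^N)$ in place of $\xi$ gives $I_\alpha *(\xi^{\bar p}|\omega_n|^{\bar p})=\xi^{\bar p}(I_\alpha *|\omega_n|^{\bar p})+o(1)$ in $L^{2N/(N-\alpha)}$, and H\"older then yields
\[
\int_{\mathbb{R}^N}(I_\alpha *|\xi\omega_n|^{\bar p})|\xi\omega_n|^{\bar p}\,dx=\int_{\mathbb{R}^N}\xi^{2\bar p}(I_\alpha *|\omega_n|^{\bar p})|\omega_n|^{\bar p}\,dx+o(1)\;\longrightarrow\;\int_{\mathbb{R}^N}\xi^{2\bar p}\,d\nu.
\]
Expanding $|\nabla(\xi\omega_n)|^2$ and using the strong convergence $\omega_n\to 0$ in $L^2_{\mathrm{loc}}$ together with $\nabla\omega_n\rightharpoonup 0$ in $L^2$, the cross term and the $|\nabla\xi|^2\omega_n^2$ term vanish, so the energy side tends to $\int_{\mathbb{R}^N}\xi^2\,d\mu$. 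This yields $S_\vartheta(\int\xi^{2\bar p}\,d\nu)^{1/\bar p}\leq\int\xi^2\,d\mu$; by a density argument (approximating $\sqrt{\zeta}$ by mollification), this extends to $(\int\zeta^{\bar p}\,d\nu)^{1/\bar p}\leq S_\vartheta^{-1}\int\zeta\,d\mu$ for every nonnegative $\zeta\in C_c^\infty$, and Lemma~\ref{MESLM} with $p=1$, $q=\bar p$, $C=S_\vartheta^{-1}$ produces~\eqref{HMINE1}. The same scheme, with $u_n$ in place of $\omega_n$ and with a cutoff $\eta_R\in C^\infty(\mathbb{R}^N)$ vanishing on $B_R$, equal to $1$ on $B_{2R}^c$ and satisfying $|\nabla\eta_R|\leq 2/R$, will give \eqref{HMINE2} after taking $\limsup_n$ and then $R\to\infty$: the gradient cross terms are $O(1/R)$, the Hardy term is harmless because $|x|^{-2}\leq R^{-2}$ on $\operatorname{supp}\eta_R$, and since $u\in D^{1,2}$ and $(I_\alpha *|u|^{\bar p})|u|^{\bar p}\in L^1(\mathbb{R}^N)$, the tails of $u$ itself vanish in the limit.

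For the mass decomposition~\eqref{HMINE3}, the weak convergence $\omega_n\rightharpoonup 0$ in $D^{1,2}$ together with Hardy's inequality (which yields $\omega_n/|x|\rightharpoonup 0$ in $L^2$) will give the Brezis-Lieb type splitting
\[
\int_{\mathbb{R}^N}\!\Big(|\nabla u_n|^2-\vartheta\tfrac{|u_n|^2}{|x|^2}\Big)dx=\int_{\mathbb{R}^N}\!\Big(|\nabla u|^2-\vartheta\tfrac{|u|^2}{|x|^2}\Big)dx+\int_{\mathbb{R}^N}\!\Big(|\nabla\omega_n|^2-\vartheta\tfrac{|\omega_n|^2}{|x|^2}\Big)dx+o(1).
\]
Then I would split the $\omega_n$-integral with a cutoff $\phi_R\in C_c^\infty$ equal to $1$ on $B_R$: the bounded part converges to $\int\phi_R^2\,d\mu\to\int d\mu$ as $R\to\infty$, while the complement contributes exactly $\mu_\infty$ by definition. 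Identity~\eqref{HMINE4} follows in the same way once the nonlocal Brezis-Lieb identity $\int(I_\alpha *|u_n|^{\bar p})|u_n|^{\bar p}\,dx=\int(I_\alpha *|u|^{\bar p})|u|^{\bar p}\,dx+\int(I_\alpha *|\omega_n|^{\bar p})|\omega_n|^{\bar p}\,dx+o(1)$ is established, which is a consequence of the a.e.\ convergence of $|\omega_n|^{\bar p}$ and the HLS inequality.

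The hard part will be the cross-term analysis once cutoffs are introduced, where the Hardy potential and the Riesz convolution interact nontrivially. Lemma~\ref{SLDL} is exactly the tool that lets the cutoff pass through $I_\alpha *$ with controllable error, and Hardy's inequality is what keeps the singular term $\vartheta|\omega_n|^2/|x|^2$ tame in the weak-convergence manipulations; with these two ingredients in place, the rest of the proof is a careful transcription of Lions' classical argument.
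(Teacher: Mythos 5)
Your overall scheme is the one the paper follows: apply Lemma~\ref{SLDL} to push cutoffs past the Riesz potential, plug truncated functions into the Rayleigh quotient defining $S_\vartheta$, invoke Lemma~\ref{MESLM}, and then combine Brezis--Lieb splittings with cutoffs near infinity to extract $\mu_\infty$ and $\nu_\infty$. Two details are, however, imprecise enough to matter. First, your claim that the gradient cross terms are ``$O(1/R)$'' because $|\nabla\eta_R|\leq 2/R$ is not right as stated: the relevant quantity $\int u_n^2|\nabla\eta_R|^2\,dx$ is bounded by a constant independent of $R$ (H\"older with the $L^N$-norm of $\nabla\eta_R$ is scale-invariant), so the error does not vanish uniformly in $n$ as $R\to\infty$. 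What actually kills these terms is a two-step limit: first $u_n\to u$ strongly in $L^2$ on the bounded annulus supporting $\nabla\eta_R$ (and $\nabla u_n\rightharpoonup\nabla u$), which replaces $u_n$ by $u$; then, since $u\in D^{1,2}$, Hardy's inequality gives $\int_{|x|>R}|u|^2/|x|^2\to 0$ and the error vanishes. The paper avoids this confusion by choosing $\psi_R$ with a fixed-width transition annulus, and the argument is the same in spirit, but you should present it as this double limit, not as an $O(1/R)$ bound.

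Second, for \eqref{HMINE2} you write ``with $u_n$ in place of $\omega_n$,'' but Lemma~\ref{SLDL} is proved only for sequences weakly converging to $0$, so it cannot be applied to $u_n$ directly. Moreover, $\int(I_\alpha*|\eta_R u_n|^{\bar p})|\eta_R u_n|^{\bar p}\,dx$ places the cutoff on \emph{both} convolution variables, whereas $\nu_\infty$ is defined with the cutoff only on the outer one. To close the gap you must first Brezis--Lieb split $\eta_R u_n=\eta_R\omega_n+\eta_R u$ inside the convolution functional, discard the $\eta_R u$ piece (it vanishes as $R\to\infty$ since $(I_\alpha*|u|^{\bar p})|u|^{\bar p}\in L^1$), apply Lemma~\ref{SLDL} to the centered sequence $\omega_n$ with $\xi=1-\eta_R^{\bar p}$, and Brezis--Lieb once more to return to $u_n$; only then does the limit equal $\nu_\infty$. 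This is exactly what the paper's identities (labelled (BLC) and (BLCRSL) there) accomplish by plugging $\psi_R\omega_n$ rather than $\eta_R u_n$ into $S_\vartheta$. In short, the route is the same and the proof can be completed, but the two places you flagged as ``careful transcription'' are precisely where the paper's specific decomposition does real work.
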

\begin{proof}[\bf Proof.]
For $R>0$, let  $\psi_R\in C^{1}\left(\mathbb{R}^{N},[0,1]\right)$ be such that
$\psi_R=1$ for $x\in\mathbb{R}^{N}\setminus  B_{R+1}(0)$ and $\psi_R=0$ for
$x\in B_{R}(0)$. Set $\omega_n=u_n-u$. Then $\omega_n\rightharpoonup 0$ in
$D^{1,2}(\mathbb{R}^{N})$,
$|\nabla \omega_{n}|^2
 -\vartheta \frac{|\omega_n|^2}{|x|^2}\rightharpoonup \mu$
weakly in the sense of measure, hence
\begin{equation*}
\aligned
\int_{\mathbb{R}^{N}}\left(|\nabla u_{n}|^2
 -\vartheta \frac{|u_{n}|^2}{|x|^2}\right) \mathrm{d}x
 &=
 \int_{\mathbb{R}^{N}}\psi_R\left(|\nabla u_{n}|^2
 -\vartheta \frac{|u_{n}|^2}{|x|^2}\right) \mathrm{d}x
  +\int_{\mathbb{R}^{N}}(1-\psi_R)\mathrm{d}\mu
\\&~~~~~+\int_{\mathbb{R}^{N}}(1-\psi_R)\left(|\nabla u|^2
 -\vartheta \frac{|u|^2}{|x|^2}\right) \mathrm{d}x +o(1).
 \endaligned
  \end{equation*}
Therefore,
\begin{equation} \label{ESJXIDT}
\aligned
\varlimsup\limits_{n\rightarrow\infty}
\int_{\mathbb{R}^{N}}\left(|\nabla u_{n}|^2
 -\vartheta \frac{|u_{n}|^2}{|x|^2}\right) \mathrm{d}x
 &=
 \lim\limits_{R\rightarrow\infty}
 \varlimsup\limits_{n\rightarrow\infty}\int_{\mathbb{R}^{N}}\psi_R\left(|\nabla u_{n}|^2
 -\vartheta \frac{|u_{n}|^2}{|x|^2}\right) \mathrm{d}x
 \\&~~~~~ +\int_{\mathbb{R}^{N}}\mathrm{d}\mu
+\int_{\mathbb{R}^{N}}\left(|\nabla u|^2
 -\vartheta \frac{|u|^2}{|x|^2}\right) \mathrm{d}x .
 \endaligned
  \end{equation}
Let $\varphi_R\in C\left(\mathbb{R}^{N},[0,1]\right)$ be such that
$\varphi_R=1$ for $x\in B_{R+1}(0)\setminus  B_{R}(0)$ and $\psi_R=0$ for
$x\in B_{R-1}(0)\cup B_{R+2}^{c}(0)$.
Since $\int_{\mathbb{R}^{N}}\mathrm{d}\mu<\infty$, for $\varepsilon >0$,
there exists $R(\varepsilon) >0$ such that
$$\lim\limits_{n\rightarrow\infty}\int_{\mathbb{R}^{N}}\varphi_R\left(|\nabla \omega_{n}|^2
 -\vartheta \frac{|\omega_{n}|^2}{|x|^2}\right) \mathrm{d}x
 =\int_{\mathbb{R}^{N}}\varphi_R\mathrm{d}\mu<\varepsilon/2,~~~\forall R>R(\varepsilon).
 $$
 Thus, for any $R>R(\varepsilon)$, there exists $N(R,\varepsilon)>0$ such that
 $$\left|\int_{\mathbb{R}^{N}}\varphi_R\left(|\nabla \omega_{n}|^2
 -\vartheta \frac{|\omega_{n}|^2}{|x|^2}\right) \mathrm{d}x\right|
 <\varepsilon,~~~\forall R>R(\varepsilon)~\mbox{and}~n>N(R,\varepsilon).
 $$
 Since $\{\omega_n\}$ is bounded in $D^{1,2}(\mathbb{R}^{N})$. By taking
 $R\rightarrow \infty$, we have that
 $$
\int_{R-1<|x|<R+2} \frac{|\omega_{n}|^2}{|x|^2} \mathrm{d}x
 \leq C \left(\int_{R-1<|x|<R+2} |\omega_{n}|^{2^*} \mathrm{d}x\right)
  ^{\frac{N-2}{N}}\left(\ln \frac{R+2}{R-1}\right)^{\frac{2}{N}}
\rightarrow 0$$
 uniformly for $n \in \mathbb{N}$.
 Then we have for $R>R(\varepsilon)$ and $n>N(R,\varepsilon)$,
 $$
 \int_{{R<|x|<R+1}}|\nabla u_{n}|^2 \mathrm{d}x
 \leq 2\left(\int_{{R<|x|<R+1}}|\nabla \omega_{n}|^2 \mathrm{d}x+
 \int_{{R<|x|<R+1}}|\nabla u|^2 \mathrm{d}x\right)<3\varepsilon.
 $$
 Note that
$$\int_{R<|x|<R+1} \frac{|\omega_{n}|^2}{|x|^2} \mathrm{d}x
 \leq C \left(\ln \frac{R+1}{R}\right)^{\frac{2}{N}}
\rightarrow 0~~~\mbox{as}~R\rightarrow \infty.$$
Hence for $R>R(\varepsilon)$, we get
$$
\left|\varlimsup\limits_{n\rightarrow\infty}
  \int_{|x|\geq R}\left(|\nabla u_{n}|^2
 -\vartheta \frac{|u_{n}|^2}{|x|^2}\right) \mathrm{d}x
 -\varlimsup\limits_{n\rightarrow\infty}\int_{\mathbb{R}^{N}}\psi_R\left(|\nabla u_{n}|^2
 -\vartheta \frac{|u_{n}|^2}{|x|^2}\right) \mathrm{d}x\right|<4\varepsilon,
 $$
 which implies
 $$
 \mu_\infty= \lim\limits_{R\rightarrow\infty}
 \varlimsup\limits_{n\rightarrow\infty}\int_{\mathbb{R}^{N}}\psi_R\left(|\nabla u_{n}|^2
 -\vartheta \frac{|u_{n}|^2}{|x|^2}\right) \mathrm{d}x.
 $$
 Identity (\ref{HMINE3}) follows then from (\ref{ESJXIDT}).
 By the Br\'{e}zis-Lieb lemma, we get
\begin{equation*}
\aligned
\varlimsup\limits_{n\rightarrow\infty}\int_{\mathbb{R}^{N}}
 (I_{\alpha}\ast |u_n|^{\bar{p}})|u_n|^{\bar{p}}\mathrm{d}x
&=\varlimsup\limits_{n\rightarrow\infty}\int_{\mathbb{R}^{N}}
 \psi_R(I_{\alpha}\ast |u_n|^{\bar{p}})|u_n|^{\bar{p}}\mathrm{d}x
+\int_{\mathbb{R}^{N}}(1-\psi_R)\mathrm{d}\nu
\\&~~~~~+\int_{\mathbb{R}^{N}}(1-\psi_R)(I_{\alpha}\ast |u|^{\bar{p}})|u|^{\bar{p}}\mathrm{d}x.
\endaligned
\end{equation*}
Let $R\rightarrow\infty$, we obtain by Lebesgue's theorem,
\begin{equation*}
 \varlimsup\limits_{n\rightarrow\infty}
  \int_{\mathbb{R}^{N}}(I_{\alpha}\ast
|u_{n}|^{\bar{p}})|u_{n}|^{\bar{p}} \mathrm{d}x=\nu_\infty+\int_{\mathbb{R}^{N}}\mathrm{d}\nu
+\int_{\mathbb{R}^{N}}(I_{\alpha}\ast
|u|^{\bar{p}})|u|^{\bar{p}} \mathrm{d}x.
\end{equation*}
According to Lemma \ref{SLDL}, we have for every $0\leq\xi \in C_{c}^{\infty}(\mathbb{R}^{N})$,
$$
\int_{\mathbb{R}^{N}}(I_{\alpha}\ast |\xi\omega_n|^{\bar{p}})|\xi\omega_n|^{\bar{p}}\mathrm{d}x
\rightarrow \int_{\mathbb{R}^{N}}|\xi|^{2\bar{p}}\mathrm{d}\nu
~~~\mbox{as}~n\rightarrow \infty.
$$
Since $\omega_n\rightarrow 0$ in
$L^{2}_{loc}(\mathbb{R}^{N})$, there holds
$$\int_{\mathbb{R}^{N}}\left(|\nabla (\xi\omega_{n})|^2
 -\vartheta \frac{|\xi\omega_{n}|^2}{|x|^2}\right) \mathrm{d}x
\rightarrow \int_{\mathbb{R}^{N}}\xi^{2}\mathrm{d}\mu
~~~\mbox{as}~n\rightarrow \infty.
 $$
By the definition of $S_{\vartheta}$, one has
$$S_{\vartheta}\left(\int_{\mathbb{R}^{N}}(I_{\alpha}\ast |\xi\omega_n|
^{\bar{p}})|\xi\omega_n|^{\bar{p}}\mathrm{d}x\right)^{1/\bar{p}}
\leq \int_{\mathbb{R}^{N}}\left(|\nabla (\xi\omega_{n})|^2
 -\vartheta \frac{|\xi\omega_{n}|^2}{|x|^2}\right) \mathrm{d}x.
 $$
Therefore,
$$
S_{\vartheta}^{1/2}\left(\int_{\mathbb{R}^{N}}|\xi|^{2\bar{p}}\mathrm{d}\nu\right)^{1/2\bar{p}}
\leq \left(\int_{\mathbb{R}^{N}}\xi^{2}\mathrm{d}\mu\right)^{1/2}.
$$
Hence, Lemma \ref{MESLM} shows that (\ref{HMINE1}) holds.
From Lemma \ref{SLDL} and the Br\'{e}zis-Lieb lemma, we have
\begin{equation} \label{BLC}
\aligned
&\varlimsup\limits_{n\rightarrow\infty}\int_{\mathbb{R}^{N}}
 (I_{\alpha}\ast |\psi_R\omega_n|^{\bar{p}})|\psi_R\omega_n|^{\bar{p}}\mathrm{d}x
=\varlimsup\limits_{n\rightarrow\infty}\int_{\mathbb{R}^{N}}
 (I_{\alpha}\ast |\omega_n|^{\bar{p}})|\omega_n|^{\bar{p}}|\psi_R|^{2\bar{p}}\mathrm{d}x
\\&~~~~~=\varlimsup\limits_{n\rightarrow\infty}\int_{\mathbb{R}^{N}}
 (I_{\alpha}\ast |u_n|^{\bar{p}})|u_n|^{\bar{p}}|\psi_R|^{2\bar{p}}\mathrm{d}x
 -\int_{\mathbb{R}^{N}}
 (I_{\alpha}\ast |u|^{\bar{p}})|u|^{\bar{p}}|\psi_R|^{2\bar{p}}\mathrm{d}x.
\endaligned
\end{equation}
On the other hand, by $\omega_n\rightarrow 0$ in
$L^{2}_{loc}(\mathbb{R}^{N})$, we obtain
\begin{equation} \label{BLCRSL}
\aligned
&\varlimsup\limits_{n\rightarrow\infty}\int_{\mathbb{R}^{N}}\left(|\nabla (\psi_R\omega_{n})|^2
 -\vartheta \frac{|\psi_R\omega_{n}|^2}{|x|^2}\right) \mathrm{d}x
\\&~~~=\varlimsup\limits_{n\rightarrow\infty}\int_{\mathbb{R}^{N}}\left(|\nabla u_{n}|^2
 -\vartheta \frac{|u_{n}|^2}{|x|^2}\right)|\psi_R|^2 \mathrm{d}x
 +\vartheta\int_{\mathbb{R}^{N}}\frac{|u|^2}{|x|^2}|\psi_R|^2 \mathrm{d}x
 -\int_{\mathbb{R}^{N}}|\nabla u|^2|\psi_R|^2 \mathrm{d}x.
\endaligned
\end{equation}
By (\ref{BLC}), (\ref{BLCRSL}) and the definition of $S_{\vartheta}$, we deduce
$$
\mu_\infty\geq S_{\vartheta}\nu_\infty^{1/\bar{p}}.
$$
\end{proof}

Now, we are ready to prove the existence of extremal function of the minimizing problem.
\begin{proof}[\bf Proof of Theorem \ref{HC}.] In view of Lemma \ref{SYJXHXL},
we find a minimizing sequence
$\{v_n\}$
that consist of  radially decreasing functions, i.e., $v_n(x)=v_n(|x|)$
for any $x\in \mathbb{R}^{N}$. From (\ref{minipro}), there exists $t_n>0$ such that
$$
\int_{B_{t_n}(0)}(I_{\alpha}\ast|v_{n}|^{\bar{p}})|v_{n}|^{\bar{p}}\mathrm{d}x=\frac{1}{2}.
$$
Set $u_n(x)=t_n^{\frac{N-2}{2}}v_n(t_{n}x)$. Then
\begin{equation} \label{BLCQQ}
\int_{B_{1}(0)}(I_{\alpha}\ast|u_{n}|^{\bar{p}})|u_{n}|^{\bar{p}}\mathrm{d}x=\frac{1}{2},
\end{equation}
and
$$
\int_{\mathbb{R}^{N}}(I_{\alpha}\ast|u_{n}|^{\bar{p}})|u_{n}|^{\bar{p}}\mathrm{d}x=1,
~~~~~~\int_{\mathbb{R}^{N}}\left(|\nabla u_{n}|^2
 -\vartheta \frac{|u_{n}|^2}{|x|^2}\right)
  \mathrm{d}x\rightarrow S_{\vartheta}.
$$
Since $\{u_n\}$ is bounded in $D^{1,2}(\mathbb{R}^{N})$, we may assume, up to a subsequence,
$$u_n\rightharpoonup u~~~\mbox{in}~D^{1,2}(\mathbb{R}^{N}),~~~
u_n \rightarrow u~~~\mbox{a.e.~on}~\mathbb{R}^N,$$
$$|\nabla (u_{n}-u)|^2
 -\vartheta \frac{|u_{n}-u|^2}{|x|^2}\rightharpoonup \mu~~~\mbox{and}~~~(I_{\alpha}\ast
|u_{n}-u|^{\bar{p}})|u_{n}-u|^{\bar{p}} \rightharpoonup \nu$$
weakly in the sense of measure. We deduce from Lemma \ref{CCLMCH} and the definition of $S_\vartheta$ that
\begin{equation*}
\aligned
S_\vartheta&=S_\vartheta\left(\nu_\infty+\int_{\mathbb{R}^{N}}\mathrm{d}\nu
+\int_{\mathbb{R}^{N}}(I_{\alpha}\ast
|u|^{\bar{p}})|u|^{\bar{p}} \mathrm{d}x\right)
\\&\leq
S_\vartheta\nu_\infty^{1/\bar{p}}+S_\vartheta\left(\int_{\mathbb{R}^{N}}\mathrm{d}\nu\right)^{1/\bar{p}}
+S_\vartheta\left(\int_{\mathbb{R}^{N}}(I_{\alpha}\ast
|u|^{\bar{p}})|u|^{\bar{p}} \mathrm{d}x\right)^{1/\bar{p}}
\\&\leq
\mu_\infty+\int_{\mathbb{R}^{N}}\mathrm{d}\mu
+\int_{\mathbb{R}^{N}}\left(|\nabla u|^2
 -\vartheta \frac{|u|^2}{|x|^2}\right) \mathrm{d}x
=S_\vartheta,
\endaligned
\end{equation*}
which implies that $\nu_\infty$, $\int_{\mathbb{R}^{N}}\mathrm{d}\nu$ and
$\int_{\mathbb{R}^{N}}(I_{\alpha}\ast |u|^{\bar{p}})|u|^{\bar{p}} \mathrm{d}x$ are equal
either to $0$ or to $1$.
By (\ref{BLCQQ}), $\nu_\infty\leq 1/2$, so $\nu_\infty=0$. If $\int_{\mathbb{R}^{N}}\mathrm{d}\nu=1$, then
$u=0$. By Lemma \ref{CCLMCH}, we have that $\nu=\sum_{i\in \Lambda}\nu_i \delta_{x_i}$. Note that
$(I_{\alpha}\ast |u_n|^{\bar{p}})|u_n|^{\bar{p}}$ is radial, we know that $\nu$ is concentrated at the origin.
Thus, (\ref{BLC}) leads to
$$
\frac{1}{2}=\lim\limits_{n\rightarrow\infty}\int_{B_{1}(0)}(I_{\alpha}\ast
|u_{n}-u|^{\bar{p}})|u_{n}-u|^{\bar{p}}\mathrm{d}x=\int_{\mathbb{R}^{N}}\mathrm{d}\nu=1.
$$
This contradiction shows that
$$\int_{\mathbb{R}^{N}}(I_{\alpha}\ast
|u|^{\bar{p}})|u|^{\bar{p}} \mathrm{d}x=1$$
and so
$$
S_\vartheta
=
\int_{\mathbb{R}^{N}}\left(|\nabla u|^2
 -\vartheta \frac{|u|^2}{|x|^2}\right) \mathrm{d}x.
$$
It is now standard by Lagrange multiplier Theorem to get the existence of a solution to (\ref{eq1}),
and so the proof of Theorem \ref{HC} is finished.
\end{proof}

\section{Asymptotic analysis of solutions to equation (\ref{eq1})}

In this section, we study the behaviour of the solution of (\ref{eq1}) near
the origin and at the infinity.

\subsection{An equivalent problem}
In order to investigate the behaviour of the solution of (\ref{eq1}) near
the origin and at the infinity, we will transform our problem (\ref{eq1}) into
an equivalent problem in weighted space. We first give some useful lemmas.

\begin{lemma}\label{DJXLM} \ The following identity holds:
$$(N-2)\int_{\mathbb{R}^{N}}\frac{|u|^2}{|x|^2}
  \mathrm{d}x+2\int_{\mathbb{R}^{N}}\frac{u}{|x|^2}x\cdot \nabla u \mathrm{d}x=0,
  ~~~\forall\ u\in D^{1,2}(\mathbb{R}^{N}).$$
\end{lemma}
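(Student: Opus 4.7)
\textbf{Proof plan for Lemma \ref{DJXLM}.}
The identity is exactly what the divergence theorem produces from the vector field $V(x):=\frac{|u(x)|^2}{|x|^2}\,x$, once one notes the elementary computation $\operatorname{div}\!\left(\frac{x}{|x|^2}\right)=\frac{N-2}{|x|^2}$ on $\mathbb{R}^N\setminus\{0\}$. My plan is therefore to (i) verify the identity on a dense subset where the boundary/singularity issues are trivial, and (ii) promote it to all of $D^{1,2}(\mathbb{R}^N)$ by a continuity argument using Hardy's inequality.

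First, I would establish the identity for $u\in C_c^{\infty}(\mathbb{R}^N\setminus\{0\})$. For such $u$, the field $V$ is smooth and compactly supported away from the origin, so $\int_{\mathbb{R}^N}\operatorname{div} V\,\mathrm{d}x=0$ by the classical divergence theorem. Expanding
\begin{equation*}
\operatorname{div} V = \nabla(|u|^2)\cdot\frac{x}{|x|^2}+|u|^2\operatorname{div}\!\left(\frac{x}{|x|^2}\right)=\frac{2u\,x\cdot\nabla u}{|x|^2}+(N-2)\frac{|u|^2}{|x|^2}
\end{equation*}
and integrating gives exactly the claimed identity.

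Next, I would extend the identity to all $u\in D^{1,2}(\mathbb{R}^N)$ by a density argument. Define the two bilinear forms $Q_1(u):=\int_{\mathbb{R}^N}\frac{|u|^2}{|x|^2}\mathrm{d}x$ and $Q_2(u):=\int_{\mathbb{R}^N}\frac{u\,x\cdot\nabla u}{|x|^2}\mathrm{d}x$. By Hardy's inequality $Q_1(u)\le \frac{4}{(N-2)^2}\|\nabla u\|_2^2$, and Cauchy--Schwarz gives $|Q_2(u)|\le Q_1(u)^{1/2}\|\nabla u\|_2$; both are therefore continuous on $D^{1,2}(\mathbb{R}^N)$. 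Hence it suffices to show $C_c^{\infty}(\mathbb{R}^N\setminus\{0\})$ is dense in $D^{1,2}(\mathbb{R}^N)$ for $N\ge 3$. I would do this by first approximating $u\in D^{1,2}(\mathbb{R}^N)$ by $C_c^{\infty}(\mathbb{R}^N)$ functions, then excising the origin using a radial cutoff $\eta_\varepsilon$ equal to $0$ on $B_\varepsilon(0)$, $1$ outside $B_{2\varepsilon}(0)$, with $|\nabla\eta_\varepsilon|\le C/\varepsilon$; the error $\|\nabla((1-\eta_\varepsilon)u)\|_2^2$ is controlled by $\int_{B_{2\varepsilon}}|\nabla u|^2\mathrm{d}x$ together with $\varepsilon^{-2}\int_{B_{2\varepsilon}}|u|^2\mathrm{d}x$, the latter vanishing as $\varepsilon\to 0$ for $N\ge 3$.

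The only real obstacle is this density/cutoff step: one must verify the cutoff error vanishes, which is precisely where the dimensional restriction $N\ge 3$ (equivalently, vanishing $2$-capacity of a point) enters. Everything else is routine divergence-theorem bookkeeping and continuity of the two quadratic forms in the $D^{1,2}$ norm.
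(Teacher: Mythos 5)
Your proposal is correct, and it reaches the same conclusion by the same basic tool (divergence theorem applied to $V(x)=\frac{|u|^2}{|x|^2}x$), but the handling of the origin singularity is organized differently from the paper. The paper works directly with $u\in C_c^{\infty}(\mathbb{R}^N)$ (dense in $D^{1,2}$ by definition, so no extra density step is needed), applies the divergence theorem on $\mathbb{R}^N\setminus B_\varepsilon(0)$, and then shows that the resulting flux term $\frac{1}{\varepsilon^{2\beta+1}}\oint_{\partial B_\varepsilon(0)}u^2\,\mathrm{d}s$ tends to $0$ along a subsequence $\varepsilon_n\to 0^+$ via a $\liminf$/contradiction argument (integrate the lower bound radially and compare with $\int_{B_{\varepsilon_0}}u^2\,\mathrm{d}x$); the dimensional hypothesis enters there as $N>2\beta+2$. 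You instead move the difficulty: you apply the divergence theorem only for $u\in C_c^{\infty}(\mathbb{R}^N\setminus\{0\})$, where there is no boundary term at all, and you pay for this later by proving the density of $C_c^{\infty}(\mathbb{R}^N\setminus\{0\})$ in $D^{1,2}(\mathbb{R}^N)$ via a cutoff $\eta_\varepsilon$ with $|\nabla\eta_\varepsilon|\lesssim 1/\varepsilon$; the error estimate $\varepsilon^{-2}\int_{B_{2\varepsilon}}|u|^2\,\mathrm{d}x\lesssim\varepsilon^{N-2}\to 0$ is exactly where $N\geq 3$ (vanishing $2$-capacity of a point) enters for you. Both routes are rigorous and the dimension restriction plays the same essential role in each; your version has the advantage of isolating the singularity issue into a reusable, standard capacity/density fact, while the paper's version also records the more general $\beta$-weighted identity (its equation \eqref{hdidplm}) along the way, which the paper then reuses in Lemma \ref{Lgjgbds} to prove a weighted Hardy inequality — your proof as stated would not give that generalization for free.
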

\begin{proof}[\bf Proof.]
Let $u\in C_{c}^{\infty}(\mathbb{R}^{N})$, $0\leq \beta<\frac{N-2}{2}$. We first prove that
 \begin{equation}\label{hdidplm}
 (N-2\beta-2)\int_{\mathbb{R}^{N}}\frac{|u|^2}{|x|^{2\beta+2}}\mathrm{d}x
 +2\int_{\mathbb{R}^{N}}\frac{u}{|x|^{2\beta+2}}x\cdot \nabla u \mathrm{d}x=0,
 ~~~\forall\ u\in C_{c}^{\infty}(\mathbb{R}^{N}).
\end{equation}
In fact, according to the well known divergence theorem,
we have that
 \begin{equation}\label{funcsd}
\int_{\mathbb{R}^{N}\setminus  B_{\varepsilon}(0)}\mathrm{div}\left(x\frac{|u|^2}{|x|^{2\beta+2}}\right)\mathrm{d}x
=-\frac{1}{\varepsilon^{2\beta+1}}\oint_{\partial B_{\varepsilon}(0)}u^2\mathrm{d}s.
\end{equation}
Next, we claim that
 \begin{equation}\label{fcslim}
\varliminf\limits_{\varepsilon\rightarrow 0^+}\frac{1}{\varepsilon^{2\beta+1}}\oint_
{\partial B_{\varepsilon}(0)}u^2\mathrm{d}s=0.
\end{equation}
If not, there exist $\varepsilon_0>0$, $\tau_0>0$ such that
$$
\frac{1}{\varepsilon^{2\beta+1}}\oint_{\partial B_{\varepsilon}(0)}u^2\mathrm{d}s\geq \tau_0,
~~~\forall\ 0<\varepsilon<\varepsilon_0.
$$
Since $N>2\beta+2$, we may assume that
 \begin{equation}\label{funcgjbds}
 \varepsilon_0^{N-2\beta-2}<\frac{\tau_0}{(2\beta+2)|u|_\infty^2\omega_N},
\end{equation}
where $\omega_N$ is the volume of unit ball in $\mathbb{R}^{N}$.
Then
$$
\omega_N|u|_\infty^2\varepsilon_0^N\geq
\int_{B_{\varepsilon_0}(0)}u^2\mathrm{d}x
=\int_{0}^{\varepsilon_0}\mathrm{d}\varepsilon
\oint_{\partial B_{\varepsilon}(0)}u^2\mathrm{d}s
\geq \frac{\tau_0\varepsilon_0^{2\beta+2}}{2\beta+2},
$$
which contradicts (\ref{funcgjbds}).
Therefore, there exists a sequence $\varepsilon_n\rightarrow 0^+$ such that
$$
\frac{1}{\varepsilon_n^{2\beta+1}}\oint_{\partial B_{\varepsilon_n}(0)}u^2\mathrm{d}s
\rightarrow 0~~~\mbox{as}~n\rightarrow \infty.
$$
It follows from (\ref{funcsd}) that
$$
\int_{\mathbb{R}^{N}\setminus  B_{\varepsilon_n}(0)}\mathrm{div}\left(x\frac{|u|^2}{|x|^{2\beta+2}}\right)\mathrm{d}x
\rightarrow 0~~~\mbox{as}~n\rightarrow \infty.
$$
Let $n\rightarrow \infty$, we obtain
$$
 (N-2\beta-2)\int_{\mathbb{R}^{N}}\frac{|u|^2}{|x|^{2\beta+2}}
  \mathrm{d}x+2\int_{\mathbb{R}^{N}}\frac{u}{|x|^{2\beta+2}}x\cdot \nabla u \mathrm{d}x
  =0.
$$
When $\beta=0$, we get
\begin{equation}\label{funcdsge}
(N-2)\int_{\mathbb{R}^{N}}\frac{|u|^2}{|x|^{2}}
  \mathrm{d}x+2\int_{\mathbb{R}^{N}}\frac{u}{|x|^{2}}x\cdot \nabla u \mathrm{d}x
  =0.
\end{equation}
Since $C_{c}^{\infty}(\mathbb{R}^{N})$ is dense in $ D^{1,2}(\mathbb{R}^{N})$,
we conclude that (\ref{funcdsge}) holds for any $u\in D^{1,2}(\mathbb{R}^{N})$.
\end{proof}

From Lemma \ref{DJXLM}, we get the following corollary.

\begin{corollary}\label{CIDTHD} \ If $(N-2)\beta-\beta^2=\vartheta$.
Then
$$
\int_{\mathbb{R}^{N}}\left(|\nabla u|^2
 -\vartheta \frac{|u|^2}{|x|^2}\right) \mathrm{d}x
 =\int_{\mathbb{R}^{N}}\frac{|\nabla v|^2}{|x|^{2\beta}}\mathrm{d}x,
 $$
 where $u\in D^{1,2}(\mathbb{R}^{N})$ and $v=|x|^{\beta}u$.
\end{corollary}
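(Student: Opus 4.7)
The idea is to substitute $u = |x|^{-\beta}v$ and expand everything pointwise, then integrate and use Lemma~\ref{DJXLM} (or rather its weighted version (\ref{hdidplm})) to collapse the resulting cross term. For $u \in C_c^{\infty}(\mathbb{R}^{N}\setminus\{0\})$, one has $v = |x|^{\beta}u \in C_c^{\infty}(\mathbb{R}^{N}\setminus\{0\})$, and the chain rule gives
$$
\nabla u = |x|^{-\beta}\nabla v - \beta\, |x|^{-\beta-2}\,x\,v.
$$
Squaring yields the pointwise identity
$$
|\nabla u|^{2} = \frac{|\nabla v|^{2}}{|x|^{2\beta}}
- \frac{2\beta}{|x|^{2\beta+2}}\, v\,(x\cdot\nabla v)
+ \frac{\beta^{2}\, v^{2}}{|x|^{2\beta+2}}.
$$

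Next I would handle the cross term by the same divergence-theorem trick that proved (\ref{hdidplm}): since $v \in C_c^{\infty}(\mathbb{R}^{N}\setminus\{0\})$, integration by parts is unproblematic, and $\operatorname{div}(x/|x|^{2\beta+2}) = (N-2\beta-2)/|x|^{2\beta+2}$ gives
$$
-2\beta\int_{\mathbb{R}^{N}}\frac{v\,(x\cdot\nabla v)}{|x|^{2\beta+2}}\,\mathrm{d}x
= \beta\int_{\mathbb{R}^{N}} v^{2}\,\operatorname{div}\!\Bigl(\frac{x}{|x|^{2\beta+2}}\Bigr)\mathrm{d}x
= \beta(N-2\beta-2)\int_{\mathbb{R}^{N}}\frac{v^{2}}{|x|^{2\beta+2}}\,\mathrm{d}x.
$$
Adding the remaining $\beta^{2}$-term and using the hypothesis $\vartheta = (N-2)\beta - \beta^{2}$, the coefficient in front of $\int v^{2}/|x|^{2\beta+2}\mathrm{d}x$ becomes exactly $\vartheta$. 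Since $v^{2}/|x|^{2\beta+2} = u^{2}/|x|^{2}$, this yields
$$
\int_{\mathbb{R}^{N}}|\nabla u|^{2}\mathrm{d}x
= \int_{\mathbb{R}^{N}}\frac{|\nabla v|^{2}}{|x|^{2\beta}}\,\mathrm{d}x
+ \vartheta\int_{\mathbb{R}^{N}}\frac{|u|^{2}}{|x|^{2}}\,\mathrm{d}x,
$$
which is the claimed identity.

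Finally I would upgrade this from $C_c^{\infty}(\mathbb{R}^{N}\setminus\{0\})$ to all of $D^{1,2}(\mathbb{R}^{N})$ by density, exactly as in the last line of the proof of Lemma~\ref{DJXLM}. Hardy's inequality makes the left-hand side continuous in the $D^{1,2}$ norm, and the right-hand side is continuous by the same bound: if $u_n \to u$ in $D^{1,2}$, then $|x|^{-\beta}\nabla v_n = \nabla u_n + \beta\, |x|^{-2}\,x\,u_n$ converges to $|x|^{-\beta}\nabla v$ in $L^{2}(\mathbb{R}^{N})$.

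The only step that requires genuine care is the density/approximation argument, since $v$ itself lives in a weighted space rather than $D^{1,2}$. I would handle this by keeping $u$ as the primary variable throughout and reading every integral involving $v$ as a shorthand for the corresponding expression in $u$, so that all approximations are carried out in $D^{1,2}(\mathbb{R}^{N})$ where $C_c^{\infty}(\mathbb{R}^{N}\setminus\{0\})$ is dense (using $N\geq 3$). The pointwise manipulation above is otherwise completely mechanical.
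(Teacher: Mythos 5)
Your proposal is correct and follows essentially the same route as the paper: expand the chain rule relating $\nabla u$ and $\nabla v$, integrate, and collapse the cross term via the divergence identity (\ref{hdidplm})/Lemma~\ref{DJXLM}. The only cosmetic difference is that you expand $|\nabla u|^2$ in terms of $v$ and apply (\ref{hdidplm}) with weight $|x|^{-2\beta-2}$ to $v$, whereas the paper expands $|\nabla v|^2/|x|^{2\beta}$ in terms of $u$ and applies the $\beta=0$ case (Lemma~\ref{DJXLM}) to $u$ directly — the two are equivalent rearrangements of the same pointwise identity, and your extra care with density is fine but not needed if one simply invokes Lemma~\ref{DJXLM}, which the paper already stated for all of $D^{1,2}(\mathbb{R}^N)$.
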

\begin{proof}[\bf Proof.]
By a direct calculation, one has
$$
\frac{|\nabla v|^2}{|x|^{2\beta}}=|\nabla u|^2+\beta^2\frac{|u|^2}{|x|^2}
+2\beta \frac{u}{|x|^2}x\cdot \nabla u.
$$
In view of Lemma \ref{DJXLM}, we obtain
\begin{equation*}
 \aligned
\int_{\mathbb{R}^{N}}\frac{|\nabla v|^2}{|x|^{2\beta}}\mathrm{d}x
&=\int_{\mathbb{R}^{N}}|\nabla u|^2\mathrm{d}x+\beta^2\int_{\mathbb{R}^{N}}
   \frac{|u|^2}{|x|^2}\mathrm{d}x
    +2\beta \int_{\mathbb{R}^{N}}\frac{u}{|x|^2}x\cdot \nabla u\mathrm{d}x
 \\&=\int_{\mathbb{R}^{N}}\left(|\nabla u|^2
 -((N-2)\beta-\beta^2) \frac{|u|^2}{|x|^2}\right) \mathrm{d}x
  \\&=\int_{\mathbb{R}^{N}}\left(|\nabla u|^2
 -\vartheta \frac{|u|^2}{|x|^2}\right) \mathrm{d}x.
 \endaligned
\end{equation*}
\end{proof}

We will use the following weighted function spaces.
\begin{definition}\label{WEITEDSPACE} \ Let $0<\beta<\frac{N-2}{2}$. Define a weighted Sobolev
space $D_{\beta}^{1,2}(\mathbb{R}^{N})$ as the closure of $C_{c}^{\infty}(\mathbb{R}^{N})$ with
respect to the norm
$$
\|u\|_{D_{\beta}^{1,2}(\mathbb{R}^{N})}=\left(\int_{\mathbb{R}^{N}}
\frac{|u|^{2^*}}{|x|^{2^*\beta}}\mathrm{d}x\right)^{1/{2^*}}+
\left(\int_{\mathbb{R}^{N}}
\frac{|\nabla u|^2}{|x|^{2\beta}}\mathrm{d}x\right)^{1/2}.
$$
Let $\Omega$ be an open subset of $\mathbb{R}^{N}$, we define
$$
W_{\beta}^{1,2}(\Omega):= \left\{u:\Omega\rightarrow \mathbb{R}~ \mbox{measurable}:
\frac{u}{|x|^{\beta}}\in L^{2^*}(\Omega),~\frac{|\nabla u|}{|x|^{\beta}}\in L^{2}(\Omega)\right\}
$$
with norm
$$
\|u\|_{W_{\beta}^{1,2}(\Omega)}=\left(\int_{\Omega}
\frac{|u|^{2^*}}{|x|^{2^*\beta}}\mathrm{d}x\right)^{1/{2^*}}+
\left(\int_{\Omega}
\frac{|\nabla u|^2}{|x|^{2\beta}}\mathrm{d}x\right)^{1/2}.
$$
Note that, following a standard argument \rm{(e.g.,\cite{DSHUKH})}, one has that the space $D_{\beta}^{1,2}(\mathbb{R}^{N})$
coincides with $W_{\beta}^{1,2}(\mathbb{R}^{N})$.
\end{definition}

\begin{lemma}\label{Lgjgbds} \ Let $0<\beta<\frac{N-2}{2}$, $u\in D_{\beta}^{1,2}(\mathbb{R}^{N})$.
Then
$$
\int_{\mathbb{R}^{N}}\frac{|u|^2}{|x|^{2\beta+2}}\mathrm{d}x
\leq
\left(\frac{2}{N-2\beta-2}\right)^2
\int_{\mathbb{R}^{N}}\frac{|\nabla u|^2}{|x|^{2\beta}}\mathrm{d}x.
$$
\end{lemma}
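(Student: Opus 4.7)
The inequality is a weighted Hardy-type inequality, and the natural strategy is to reduce it to the integration-by-parts identity \eqref{hdidplm} already established inside the proof of Lemma \ref{DJXLM}. By the definition of $D_{\beta}^{1,2}(\mathbb{R}^{N})$ as the closure of $C_{c}^{\infty}(\mathbb{R}^{N})$, it suffices to prove the inequality for $u\in C_{c}^{\infty}(\mathbb{R}^{N})$; then a routine density/Fatou argument (together with the control of $\int |\nabla u|^{2}/|x|^{2\beta}\,\mathrm{d}x$ by the $D_{\beta}^{1,2}$-norm) extends it to all $u\in D_{\beta}^{1,2}(\mathbb{R}^{N})$.

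The core identity I would invoke is exactly
$$
(N-2\beta-2)\int_{\mathbb{R}^{N}}\frac{|u|^{2}}{|x|^{2\beta+2}}\,\mathrm{d}x
 = -2\int_{\mathbb{R}^{N}}\frac{u}{|x|^{2\beta+2}}\,x\cdot\nabla u\,\mathrm{d}x,
$$
which holds for any $u\in C_{c}^{\infty}(\mathbb{R}^{N})$ by the argument used in the proof of Lemma \ref{DJXLM}. Since $|x\cdot\nabla u|\leq|x||\nabla u|$, bounding the right-hand side by its absolute value and factoring the weight as $|x|^{-(2\beta+1)}=|x|^{-(\beta+1)}\cdot|x|^{-\beta}$ gives
$$
(N-2\beta-2)\int_{\mathbb{R}^{N}}\frac{|u|^{2}}{|x|^{2\beta+2}}\,\mathrm{d}x
\leq 2\int_{\mathbb{R}^{N}}\frac{|u|}{|x|^{\beta+1}}\cdot\frac{|\nabla u|}{|x|^{\beta}}\,\mathrm{d}x.
$$
Now I would apply the Cauchy--Schwarz inequality in $L^{2}(\mathbb{R}^{N})$ to the two factors, yielding
$$
(N-2\beta-2)\int_{\mathbb{R}^{N}}\frac{|u|^{2}}{|x|^{2\beta+2}}\,\mathrm{d}x
\leq 2\left(\int_{\mathbb{R}^{N}}\frac{|u|^{2}}{|x|^{2\beta+2}}\,\mathrm{d}x\right)^{\!1/2}\!
\left(\int_{\mathbb{R}^{N}}\frac{|\nabla u|^{2}}{|x|^{2\beta}}\,\mathrm{d}x\right)^{\!1/2}.
$$
Since $0<\beta<\tfrac{N-2}{2}$ forces $N-2\beta-2>0$, I can divide both sides by the positive quantity $(\int|u|^{2}/|x|^{2\beta+2}\,\mathrm{d}x)^{1/2}$ (handling separately and trivially the degenerate case where this integral vanishes), then square, to obtain
$$
\int_{\mathbb{R}^{N}}\frac{|u|^{2}}{|x|^{2\beta+2}}\,\mathrm{d}x
\leq\left(\frac{2}{N-2\beta-2}\right)^{\!2}
\int_{\mathbb{R}^{N}}\frac{|\nabla u|^{2}}{|x|^{2\beta}}\,\mathrm{d}x.
$$

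I do not expect any serious obstacle: the derivation of \eqref{hdidplm} has already absorbed the delicate point (the boundary term near the origin vanishes along a suitable sequence $\varepsilon_{n}\to 0^{+}$), and the remainder is just Cauchy--Schwarz plus a density step. The only small care needed is to justify dividing by $(\int|u|^{2}/|x|^{2\beta+2}\,\mathrm{d}x)^{1/2}$ when it could be zero, but in that case the conclusion is immediate. Passing from $C_{c}^{\infty}$ to $D_{\beta}^{1,2}$ is routine: for $u\in D_{\beta}^{1,2}(\mathbb{R}^{N})$, pick an approximating sequence $u_{k}\in C_{c}^{\infty}$ with $u_{k}\to u$ in the $D_{\beta}^{1,2}$-norm, apply the established inequality to $u_{k}$, and pass to the limit on the right (by norm convergence) and use Fatou on the left.
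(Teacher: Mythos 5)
Your proposal is correct and follows essentially the same route as the paper: both start from the identity \eqref{hdidplm} established in the proof of Lemma \ref{DJXLM}, apply the Cauchy--Schwarz (H\"older) inequality after splitting the weight, divide by the square root and square, and then extend to $D_{\beta}^{1,2}(\mathbb{R}^{N})$ by density. The only difference is that you spell out the (trivial) degenerate case and the Fatou step, which the paper leaves implicit.
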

\begin{proof}[\bf Proof.]
We deduce from (\ref{hdidplm}) and the H\"{o}lder inequality that
$$
 \int_{\mathbb{R}^{N}}\frac{|u|^2}{|x|^{2\beta+2}}\mathrm{d}x
 \leq
 \frac{2}{N-2\beta-2}\left(\int_{\mathbb{R}^{N}}\frac{|u|^2}{|x|^{2\beta+2}}\mathrm{d}x\right)^{1/2}
 \left(\int_{\mathbb{R}^{N}}\frac{|\nabla u|^2}{|x|^{2\beta}}\mathrm{d}x\right)^{1/2},
$$
which leads to
 \begin{equation}\label{cmxine}
\int_{\mathbb{R}^{N}}\frac{|u|^2}{|x|^{2\beta+2}}\mathrm{d}x
\leq
\left(\frac{2}{N-2\beta-2}\right)^2
\int_{\mathbb{R}^{N}}\frac{|\nabla u|^2}{|x|^{2\beta}}\mathrm{d}x,
~~~\forall\ u\in C_{c}^{\infty}(\mathbb{R}^{N}).
\end{equation}
Since $C_{c}^{\infty}(\mathbb{R}^{N})$ is dense in $ D_{\beta}^{1,2}(\mathbb{R}^{N})$,
we conclude that (\ref{cmxine}) holds for any $u\in D_{\beta}^{1,2}(\mathbb{R}^{N})$.
\end{proof}

\begin{remark}\label{djxkjql}
From Lemma \ref{Lgjgbds}, it is easy to see that
$$
u\in D_{\beta}^{1,2}(\mathbb{R}^{N})~~~\iff~~~\frac{u}{|x|^\beta}\in D^{1,2}(\mathbb{R}^{N}).
$$
\end{remark}

\begin{lemma}\label{SOBOINETG} \ Let $0<\beta<\frac{N-2}{2}$. Then there is a constant
$C(N,\beta)>0$ such that
$$
\left(\int_{\mathbb{R}^{N}}
\frac{|v|^{2^*}}{|x|^{2^*\beta}}\mathrm{d}x\right)^{1/{2^*}}
\leq
C(N,\beta)
\left(\int_{\mathbb{R}^{N}}
\frac{|\nabla v|^2}{|x|^{2\beta}}\mathrm{d}x\right)^{1/2},~~~
\forall~v\in D_{\beta}^{1,2}(\mathbb{R}^{N}).
$$
\end{lemma}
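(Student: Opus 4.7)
The plan is to reduce the weighted Sobolev inequality to the classical Sobolev inequality via the substitution $u=v/|x|^{\beta}$, then convert the gradient norm back with the help of Corollary \ref{CIDTHD} and the standard Hardy inequality.

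First, given $v\in D_{\beta}^{1,2}(\mathbb{R}^{N})$, set $u:=v/|x|^{\beta}$. By Remark \ref{djxkjql}, $u\in D^{1,2}(\mathbb{R}^{N})$, and directly
$$
\int_{\mathbb{R}^{N}}\frac{|v|^{2^{*}}}{|x|^{2^{*}\beta}}\,\mathrm{d}x=\int_{\mathbb{R}^{N}}|u|^{2^{*}}\,\mathrm{d}x.
$$
Applying the classical Sobolev embedding $D^{1,2}(\mathbb{R}^{N})\hookrightarrow L^{2^{*}}(\mathbb{R}^{N})$ to $u$ gives
$$
\left(\int_{\mathbb{R}^{N}}\frac{|v|^{2^{*}}}{|x|^{2^{*}\beta}}\,\mathrm{d}x\right)^{1/2^{*}}\leq S^{-1/2}\left(\int_{\mathbb{R}^{N}}|\nabla u|^{2}\,\mathrm{d}x\right)^{1/2},
$$
where $S$ is the sharp Sobolev constant.

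Next, I would recover the weighted gradient norm. Choose $\vartheta:=(N-2)\beta-\beta^{2}$; the hypothesis $0<\beta<(N-2)/2$ ensures $\vartheta>0$, and the identity $(N-2)^{2}/4-\vartheta=(\beta-(N-2)/2)^{2}>0$ guarantees $\vartheta<(N-2)^{2}/4$. By Corollary \ref{CIDTHD},
$$
\int_{\mathbb{R}^{N}}\frac{|\nabla v|^{2}}{|x|^{2\beta}}\,\mathrm{d}x=\int_{\mathbb{R}^{N}}\left(|\nabla u|^{2}-\vartheta\frac{|u|^{2}}{|x|^{2}}\right)\mathrm{d}x.
$$
Combining this with the classical Hardy inequality $\int_{\mathbb{R}^{N}}|u|^{2}/|x|^{2}\,\mathrm{d}x\leq \bigl(2/(N-2)\bigr)^{2}\int_{\mathbb{R}^{N}}|\nabla u|^{2}\,\mathrm{d}x$ yields
$$
\int_{\mathbb{R}^{N}}\frac{|\nabla v|^{2}}{|x|^{2\beta}}\,\mathrm{d}x\geq\left(1-\frac{4\vartheta}{(N-2)^{2}}\right)\int_{\mathbb{R}^{N}}|\nabla u|^{2}\,\mathrm{d}x,
$$
where the coefficient is strictly positive by the choice of $\vartheta$.

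Chaining the two estimates produces the claimed inequality with
$$
C(N,\beta)=S^{-1/2}\left(1-\frac{4\vartheta}{(N-2)^{2}}\right)^{-1/2}.
$$
Density of $C_{c}^{\infty}(\mathbb{R}^{N})$ in $D_{\beta}^{1,2}(\mathbb{R}^{N})$ (used in passing from smooth $u$ to the weak limit) is already built into the identity of Corollary \ref{CIDTHD}, so no further approximation argument is needed. The only delicate point — and the place where the assumption $\beta<(N-2)/2$ is essential — is verifying that $\vartheta$ lies strictly below the sharp Hardy constant $(N-2)^{2}/4$; everything else is routine.
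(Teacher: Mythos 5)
Your proof is correct but the second half takes a genuinely different route from the paper's. Both arguments open identically: put $u=v/|x|^{\beta}$, invoke Remark \ref{djxkjql} to get $u\in D^{1,2}(\mathbb{R}^{N})$, rewrite the left side as $\|u\|_{L^{2^*}}$, and apply the classical Sobolev inequality. Where you diverge is in bounding $\int_{\mathbb{R}^N}|\nabla u|^{2}\,\mathrm{d}x$ by $\int_{\mathbb{R}^N}|\nabla v|^{2}/|x|^{2\beta}\,\mathrm{d}x$: the paper expands $\nabla\bigl(v/|x|^{\beta}\bigr)$ pointwise, applies the elementary bound $|a+b|^{2}\leq 2|a|^{2}+2|b|^{2}$, and then controls the resulting $\int|v|^{2}/|x|^{2\beta+2}$ term with the weighted Hardy inequality of Lemma \ref{Lgjgbds}; you instead invoke the exact identity of Corollary \ref{CIDTHD}, choosing $\vartheta=(N-2)\beta-\beta^{2}$, and close the loop with the classical Hardy inequality. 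Your route is a bit cleaner and yields a sharper constant, namely $C(N,\beta)=S^{-1/2}\cdot\frac{N-2}{N-2-2\beta}$, whereas the paper produces $S(N)\bigl(2+8\beta^{2}/(N-2\beta-2)^{2}\bigr)^{1/2}$, which is never smaller. Since the lemma only asserts the existence of a finite constant, both arguments are valid; yours trades the crude $2(a^{2}+b^{2})$ bound for the exact Pohozaev-type identity already established earlier in the paper, which is a tidy observation. The one check worth making explicit (and which you did) is that $\vartheta$ lies strictly below the sharp Hardy constant $(N-2)^{2}/4$, which follows from $0<\beta<(N-2)/2$.
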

\begin{proof}[\bf Proof.]
From Remark \ref{djxkjql}, $~\frac{v}{|x|^\beta}\in D^{1,2}(\mathbb{R}^{N})$. The Sobolev
inequality leads to
$$
\left(\int_{\mathbb{R}^{N}}
\frac{|v|^{2^*}}{|x|^{2^*\beta}}\mathrm{d}x\right)^{1/{2^*}}
\leq
S(N)
\left(\int_{\mathbb{R}^{N}}
\left|\nabla \left(\frac{v}{|x|^{\beta}}\right)\right|^2\mathrm{d}x\right)^{1/2},
$$
where $S(N)$ is the best Sobolev constant for the embedding
$D^{1,2}(\mathbb{R}^{N})\hookrightarrow L^{2^*}(\mathbb{R}^{N})$.
By Lemma \ref{Lgjgbds}, we have
\begin{equation*}
 \aligned
\int_{\mathbb{R}^{N}}
\left|\nabla \left(\frac{v}{|x|^{\beta}}\right)\right|^2\mathrm{d}x
&\leq
2\int_{\mathbb{R}^{N}}
 \frac{|\nabla v|^2}{|x|^{2\beta}}\mathrm{d}x
 +2\beta^2\int_{\mathbb{R}^{N}}
 \frac{| v|^2}{|x|^{2\beta+2}}\mathrm{d}x
 \\&\leq
 \left(2+2\beta^2\left(\frac{2}{N-2\beta-2}\right)^2\right)
 \int_{\mathbb{R}^{N}}
\frac{|\nabla v|^2}{|x|^{2\beta}}\mathrm{d}x.
 \endaligned
\end{equation*}
The proof is thus finished.
\end{proof}

\begin{lemma}\label{Lemfcdjbh} \ Let $0<\vartheta<\frac{(N-2)^2}{4}$, $0<\beta<\frac{N-2}{2}$
and $(N-2)\beta-\beta^2=\vartheta$. Suppose that $u\in D^{1,2}(\mathbb{R}^{N})$ is
a weak solution of {\rm (\ref{eq1})}. Set $v=|x|^\beta u$, then
$$
\int_{\mathbb{R}^{N}}\frac{\nabla v \cdot \nabla \varphi }{|x|^{2\beta}}\mathrm{d}x
=\int_{\mathbb{R}^{N}}\left(I_{\alpha}\ast\frac{|v|^{\bar{p}}}{|x|^{\bar{p}\beta}}\right)
  \frac{|v|^{\bar{p}-2}v\varphi}{|x|^{\bar{p}\beta}}\mathrm{d}x,~~~\forall  ~\varphi \in
  C_{c}^{\infty}(\mathbb{R}^{N}).
$$
\end{lemma}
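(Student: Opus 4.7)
The plan is to obtain the claimed identity by inserting the particular test function $\psi := \varphi/|x|^{\beta}$ into the weak formulation of (\ref{eq1}) and then rewriting everything in terms of $v = |x|^{\beta}u$.

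The first step is to verify that $\psi$ is an admissible test function, i.e. $\psi\in D^{1,2}(\mathbb{R}^{N})$. Since $\varphi\in C_{c}^{\infty}(\mathbb{R}^{N})$, $\psi$ has compact support, and the hypothesis $\beta<(N-2)/2$ (equivalently $2\beta+2<N$) together with Hardy's inequality guarantees that $|\nabla\varphi|^{2}/|x|^{2\beta}$ and $\varphi^{2}/|x|^{2\beta+2}$ are integrable near the origin. Writing
$$\nabla\psi=\frac{\nabla\varphi}{|x|^{\beta}}-\beta\frac{\varphi\, x}{|x|^{\beta+2}},$$
one concludes $\psi\in D^{1,2}(\mathbb{R}^{N})$, so it is admissible in
$$\int_{\mathbb{R}^{N}}\nabla u\cdot\nabla\psi\,\mathrm{d}x-\vartheta\int_{\mathbb{R}^{N}}\frac{u\psi}{|x|^{2}}\,\mathrm{d}x=\int_{\mathbb{R}^{N}}(I_{\alpha}\ast|u|^{\bar{p}})|u|^{\bar{p}-2}u\psi\,\mathrm{d}x.$$

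Next I would substitute $u=v/|x|^{\beta}$ on both sides. On the right, the substitution matches term by term the claimed convolution expression. On the left, an algebraic expansion using $\nabla u=\nabla v/|x|^{\beta}-\beta v x/|x|^{\beta+2}$ and the analogous formula for $\nabla\psi$ gives, after collecting terms,
$$\nabla u\cdot\nabla\psi-\vartheta\frac{u\psi}{|x|^{2}}=\frac{\nabla v\cdot\nabla\varphi}{|x|^{2\beta}}-\beta\,\frac{x\cdot\nabla(v\varphi)}{|x|^{2\beta+2}}+(\beta^{2}-\vartheta)\frac{v\varphi}{|x|^{2\beta+2}}.$$

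The main step is then to show that the last two terms integrate to zero. For this I would prove the polarized version of identity (\ref{hdidplm}),
$$(N-2\beta-2)\int_{\mathbb{R}^{N}}\frac{v\varphi}{|x|^{2\beta+2}}\,\mathrm{d}x+\int_{\mathbb{R}^{N}}\frac{x\cdot\nabla(v\varphi)}{|x|^{2\beta+2}}\,\mathrm{d}x=0,$$
by applying the divergence theorem to $\mathrm{div}(x v\varphi/|x|^{2\beta+2})$ on $\mathbb{R}^{N}\setminus B_{\varepsilon}(0)$ exactly as in the proof of Lemma \ref{DJXLM}. The only delicate point is the boundary control at the origin; this follows by selecting a sequence $\varepsilon_{n}\to 0^{+}$ along which the surface integral vanishes, whose existence is guaranteed by the local integrability of $|v\varphi|/|x|^{2\beta+1}=|u\varphi|/|x|^{\beta+1}$ near zero (apply Cauchy--Schwarz, Hardy's inequality for $u\in D^{1,2}(\mathbb{R}^{N})$, and the compact support of $\varphi$ plus $2\beta<N$). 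Plugging this polarized identity into the preceding expansion, the two cross terms collapse with coefficient
$$\beta(N-2\beta-2)+(\beta^{2}-\vartheta)=(N-2)\beta-\beta^{2}-\vartheta=0,$$
using exactly the hypothesis $(N-2)\beta-\beta^{2}=\vartheta$. What remains on the left is $\int_{\mathbb{R}^{N}}\nabla v\cdot\nabla\varphi/|x|^{2\beta}\,\mathrm{d}x$, which is the desired identity.
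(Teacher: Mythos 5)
Your proposal follows essentially the same route as the paper's proof: test the weak formulation of (\ref{eq1}) with $\psi=\varphi/|x|^{\beta}$, rewrite everything through $v=|x|^{\beta}u$, observe that the cross terms reorganize (using $(N-2)\beta-\beta^{2}=\vartheta$) into a total divergence, and kill that divergence by the cut-off argument near the origin. In fact your ``polarized version of (\ref{hdidplm})'' for $v\varphi$ is literally identical to the paper's identity $\int_{\mathbb{R}^{N}}\mathrm{div}\bigl(\varphi u\,|x|^{-\beta-2}x\bigr)\mathrm{d}x=0$, since $\varphi u/|x|^{\beta+2}=v\varphi/|x|^{2\beta+2}$; the algebraic expansion you wrote down checks out. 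The one place where your justification is not quite right is the boundary control: mere local integrability of $|u\varphi|/|x|^{\beta+1}$ does \emph{not} produce a sequence $\varepsilon_{n}\to0^{+}$ along which $g(\varepsilon):=\varepsilon^{-\beta-1}\oint_{\partial B_{\varepsilon}(0)}|u\varphi|\,\mathrm{d}s$ tends to zero (e.g.\ $g\equiv1$ is integrable on $(0,1)$ but never small), and the condition you cite, $2\beta<N$, is weaker than what is required. What actually makes the argument close is a polynomial rate with exponent strictly greater than $1$: Cauchy--Schwarz with Hardy gives $\int_{B_{\varepsilon}(0)}|u\varphi|/|x|^{\beta+1}\,\mathrm{d}x=O(\varepsilon^{(N-2\beta)/2})$, and $(N-2\beta)/2>1$ precisely because $\beta<(N-2)/2$; then the standard contradiction argument (assuming $g\geq\tau_{0}$ on $(0,\varepsilon_{0})$ forces $\tau_{0}\varepsilon_{0}\leq C\varepsilon_{0}^{(N-2\beta)/2}$) succeeds, exactly as in the paper's proof where the corresponding rate $\int_{B_{\varepsilon}(0)}|u|\,\mathrm{d}x=O(\varepsilon^{N/2+1})$ is used. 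Replacing ``local integrability plus $2\beta<N$'' by this quantitative bound plus $2\beta+2<N$ makes the proposal complete.
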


\begin{proof}[\bf Proof.]
Testing (\ref{eq1}) with $\frac{\varphi}{|x|^{\beta}}$, we obtain
$$
\int_{\mathbb{R}^{N}}\frac{\nabla u \cdot \nabla \varphi }{|x|^{\beta}}\mathrm{d}x
-\beta \int_{\mathbb{R}^{N}}\frac{\varphi x\cdot\nabla u }{|x|^{\beta+2}}\mathrm{d}x
-\vartheta \int_{\mathbb{R}^{N}}\frac{\varphi u }{|x|^{\beta+2}}\mathrm{d}x
=\int_{\mathbb{R}^{N}}(I_{\alpha}\ast |u|^{\bar{p}})
  \frac{|u|^{\bar{p}-2}u\varphi}{|x|^{\beta}}\mathrm{d}x.
$$
Note that $(N-2)\beta-\beta^2=\vartheta$, we have
\begin{equation*}
  \aligned
\int_{\mathbb{R}^{N}}\frac{\nabla v \cdot \nabla \varphi }{|x|^{2\beta}}\mathrm{d}x
&=\int_{\mathbb{R}^{N}}\frac{\nabla u \cdot \nabla \varphi }{|x|^{\beta}}\mathrm{d}x
   +\beta \int_{\mathbb{R}^{N}}\frac{u x\cdot\nabla\varphi }{|x|^{\beta+2}}\mathrm{d}x
\\&=\beta \int_{\mathbb{R}^{N}}\frac{\varphi x\cdot\nabla u }{|x|^{\beta+2}}\mathrm{d}x
      +\vartheta \int_{\mathbb{R}^{N}}\frac{\varphi u }{|x|^{\beta+2}}\mathrm{d}x
      \\&~~~~~+\beta \int_{\mathbb{R}^{N}}\frac{u x\cdot\nabla\varphi }{|x|^{\beta+2}}\mathrm{d}x
      +\int_{\mathbb{R}^{N}}\left(I_{\alpha}\ast\frac{|v|^{\bar{p}}}{|x|^{\bar{p}\beta}}\right)
  \frac{|v|^{\bar{p}-2}v\varphi}{|x|^{\bar{p}\beta}}\mathrm{d}x
  \\&=\beta\int_{\mathbb{R}^{N}}\mathrm{div}\left(\frac{\varphi u}{|x|^{\beta+2}}x\right)\mathrm{d}x
        +\int_{\mathbb{R}^{N}}\left(I_{\alpha}\ast\frac{|v|^{\bar{p}}}{|x|^{\bar{p}\beta}}\right)
  \frac{|v|^{\bar{p}-2}v\varphi}{|x|^{\bar{p}\beta}}\mathrm{d}x.
   \endaligned
\end{equation*}
By the divergence theorem, one has
\begin{equation*}
\int_{\mathbb{R}^{N}\setminus  B_{\varepsilon}(0)}\mathrm{div}\left(\frac{\varphi u}{|x|^{\beta+2}}x\right)\mathrm{d}x
=-\frac{1}{\varepsilon^{\beta+1}}\oint_{\partial B_{\varepsilon}(0)}\varphi u\mathrm{d}s.
\end{equation*}
Since $u\in D^{1,2}(\mathbb{R}^{N})$, by the H\"{o}lder inequality, we deduce that
$$
\int_{B_{\varepsilon}(0)}|u|\mathrm{d}x\leq
O(\varepsilon^{\frac{N}{2}+1}).
$$
Note that $N>2\beta+2$. Now, using the same arguments as in the proof of Lemma \ref{DJXLM}, we can get that
$$
\varliminf\limits_{\varepsilon\rightarrow 0^+}\frac{1}{\varepsilon^{\beta+1}}\oint_
{\partial B_{\varepsilon}(0)}\varphi u\mathrm{d}s=0.
$$
Hence,
$$
\int_{\mathbb{R}^{N}}\mathrm{div}\left(\frac{\varphi u}{|x|^{\beta+2}}x\right)\mathrm{d}x=0,
$$
and
$$
\int_{\mathbb{R}^{N}}\frac{\nabla v \cdot \nabla \varphi }{|x|^{2\beta}}\mathrm{d}x
=\int_{\mathbb{R}^{N}}\left(I_{\alpha}\ast\frac{|v|^{\bar{p}}}{|x|^{\bar{p}\beta}}\right)
  \frac{|v|^{\bar{p}-2}v\varphi}{|x|^{\bar{p}\beta}}\mathrm{d}x.
$$
\end{proof}

Inspired by Lemma \ref{Lemfcdjbh}, we give
\begin{definition}\label{TRSINEWS} \ Let $f\in L^{1}_{loc}(\mathbb{R}^{N})$. Then a function $v\in D_{\beta}^{1,2}(\mathbb{R}^{N})$
will be called a weak solution of
the equation
$$
-\Delta_{\beta} v=f
~~~{\rm in}~\mathbb{R}^{N},
$$
if
$$
\int_{\mathbb{R}^{N}}\frac{\nabla v \cdot \nabla \varphi }{|x|^{2\beta}}\mathrm{d}x
=\int_{\mathbb{R}^{N}}f \varphi\mathrm{d}x~~~{\rm for~ all}~\varphi\in C_{c}^{\infty}(\mathbb{R}^{N}).
$$
\end{definition}

\begin{remark}\label{EQTRREMR}
In view of Lemma \ref{Lemfcdjbh}, we see that if $u\in D^{1,2}(\mathbb{R}^{N})$ is
a weak solution to
$$
-\Delta u-\frac{\vartheta}{|x|^2}u=(I_{\alpha}\ast|u|^{\bar{p}})|u|^{\bar{p}-2}u
~~~{\rm in}~\mathbb{R}^{N},
$$
then $v=|x|^\beta u\in D_{\beta}^{1,2}(\mathbb{R}^{N})$ weakly solves the equation
$$
-\Delta_{\beta} v
=\left(I_{\alpha}\ast\frac{|v|^{\bar{p}}}{|x|^{\bar{p}\beta}}\right)
  \frac{|v|^{\bar{p}-2}v}{|x|^{\bar{p}\beta}}
~~~{\rm in}~\mathbb{R}^{N},
$$
where $\beta=\frac{N-2-\sqrt{(N-2)^2-4\vartheta}}{2}$.
\end{remark}

\subsection{The $L^{\infty}$ estimate}
In this section we prove a regularity result for the weak solution of (\ref{eq1}).
More precisely:

\begin{lemma}\label{Linfesti} \ Let $(N-4)_{+}<\alpha<N$, $0<\beta<\frac{N-2}{2}$. Suppose that
$0\leq v\in D_{\beta}^{1,2}(\mathbb{R}^{N})$ be a weak solution of
\begin{equation} \label{chodjiadfjubeq}
-\Delta_{\beta} v
=\left(I_{\alpha}\ast\frac{v^{\bar{p}}}{|x|^{\bar{p}\beta}}\right)
  \frac{v^{\bar{p}-1}}{|x|^{\bar{p}\beta}}
~~~{\rm in}~\mathbb{R}^{N}.
\end{equation}
Then $v\in L^{\infty}(\mathbb{R}^{N})$.
\end{lemma}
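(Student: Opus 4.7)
The plan is to run a Moser iteration in the weighted space $D_\beta^{1,2}(\mathbb{R}^N)$, with the nonlocal term controlled by the Hardy--Littlewood--Sobolev inequality.

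First, I would linearize the right-hand side. Setting
$$K(x):=I_\alpha\ast \frac{v^{\bar p}}{|x|^{\bar p\beta}}=I_\alpha\ast\!\left(\frac{v}{|x|^\beta}\right)^{\!\bar p},$$
Remark \ref{djxkjql} and Lemma \ref{SOBOINETG} give $v/|x|^\beta\in L^{2^*}(\mathbb{R}^N)$. Since $\bar p\cdot\frac{2N}{N+\alpha}=2^*$, we obtain $(v/|x|^\beta)^{\bar p}\in L^{2N/(N+\alpha)}(\mathbb{R}^N)$, so the Hardy--Littlewood--Sobolev inequality (Theorem A) yields $K\in L^{2N/(N-\alpha)}(\mathbb{R}^N)$. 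The equation becomes
$$-\Delta_\beta v=K(x)\,\frac{v^{\bar p-1}}{|x|^{\bar p\beta}}\quad\text{in the weak sense of Definition \ref{TRSINEWS}}.$$

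Next, I would iterate. For $s\ge 1$ and $L>0$, set $v_L:=\min(v,L)$ and test with $\varphi=v\,v_L^{2(s-1)}$, justified via $C_c^\infty$-approximation in $D_\beta^{1,2}$. Writing $w_L:=v\,v_L^{s-1}$, a standard manipulation combined with Lemma \ref{SOBOINETG} produces
$$\left(\int_{\mathbb{R}^N}\frac{w_L^{2^*}}{|x|^{2^*\beta}}\mathrm{d}x\right)^{\!2/2^*}\!\le C(N,\beta)\,s^{2}\int_{\mathbb{R}^N}K(x)\,\frac{v^{\bar p-2}\,w_L^{2}}{|x|^{\bar p\beta}}\mathrm{d}x.$$
To handle the right-hand side, I would split $K=K_A^{\sharp}+K_A^{\flat}$ where $K_A^{\sharp}:=K\chi_{\{|K|\le A\}}\in L^\infty$ and $|K_A^{\flat}|_{2N/(N-\alpha)}$ is arbitrarily small for $A$ large. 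Using $v^{\bar p-2}w_L^2/|x|^{\bar p\beta}=(v/|x|^\beta)^{\bar p-2}(w_L/|x|^\beta)^2$ together with Hölder's inequality with the conjugate pair $(\frac{2N}{N-\alpha},\frac{2N}{N+\alpha})$, the $K_A^{\flat}$-contribution is bounded by
$$C\,|K_A^{\flat}|_{\frac{2N}{N-\alpha}}\,\Big|\tfrac{w_L}{|x|^\beta}\Big|_{2^*}^{2}\,\Big|\tfrac{v}{|x|^\beta}\Big|_{2^*}^{\bar p-2},$$
which, by taking $A$ large, is absorbed into the left-hand side. The remaining $K_A^{\sharp}$-term is estimated by Hölder with an exponent strictly below $2^*$ (available because $\alpha<N$), producing a genuine gain of integrability at each step in terms of previously known $L^q$-norms of $v/|x|^\beta$.

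Iterating this scheme, sending $L\to\infty$ by monotone convergence, gives $v/|x|^\beta\in L^q(\mathbb{R}^N)$ for every finite $q\ge 2^*$. An application of Hardy--Littlewood--Sobolev with an exponent just below (and just above) $\frac{2N}{N+\alpha}$ then upgrades $K$ to $L^\infty(\mathbb{R}^N)$, reducing the problem to the linear equation $-\Delta_\beta v=W(x)\,v^{\bar p-1}/|x|^{\bar p\beta}$ with $W\in L^\infty$. A final Moser iteration in the weighted setting closes the argument and gives $v\in L^\infty(\mathbb{R}^N)$. The main obstacle is the critical nature of the first iteration step: the exponent on $|w_L/|x|^\beta|_{2^*}$ produced by Hölder exactly matches the Sobolev term on the left, so the smallness decomposition $K=K_A^\sharp+K_A^\flat$ is essential to initiate the bootstrap; in addition, carefully verifying admissibility of the truncated test function and tracking all weighted norms through the iteration requires attention.
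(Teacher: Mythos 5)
The outline (Moser iteration in the weighted space, weighted Sobolev inequality, HLS, a smallness decomposition to initiate the bootstrap) matches the paper's strategy, and your $K=K^\sharp_A+K^\flat_A$ split is a legitimate alternative to the paper's level-set split $\{v\le m\}\cup\{v>m\}$. However, there is a genuine gap in the second half. Your test-function estimate has $\bigl(\int w_L^{2^*}/|x|^{2^*\beta}\,\mathrm dx\bigr)^{2/2^*}$ on the left, so after sending $L\to\infty$ what the iteration controls is $\int v^{2^*s}/|x|^{2^*\beta}\,\mathrm dx$ \emph{with the weight $|x|^{-2^*\beta}$ held fixed}; it does \emph{not} give $\int v^{2^*s}/|x|^{2^*s\beta}\,\mathrm dx<\infty$, i.e.\ it does not give $v/|x|^\beta\in L^q$ for all $q$. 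These are genuinely different families of norms (for $s>1$ one is off from the other by the unbounded factor $|x|^{2^*\beta(s-1)}$), and the paper's iteration is formulated precisely in the fixed-weight family for this reason.

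The gap is not cosmetic, because the next step built on it -- ``upgrade $K$ to $L^\infty$'' -- is actually \emph{false} in general. Since
$$K(0)=\int_{\mathbb{R}^N}\frac{v^{\bar p}(y)}{|y|^{\bar p\beta+N-\alpha}}\,\mathrm dy,$$
the integrand has a nonintegrable singularity at the origin as soon as $\bar p\beta\ge\alpha$, i.e.\ $\beta\ge\frac{\alpha(N-2)}{N+\alpha}$, a range that is nonempty inside $(0,\frac{N-2}{2})$ whenever $\alpha<N$. So for $\vartheta$ near $\frac{(N-2)^2}{4}$, $K$ is unbounded at $0$ even if $v$ is bounded, and the reduction to ``$-\Delta_\beta v=W v^{\bar p-1}/|x|^{\bar p\beta}$ with $W\in L^\infty$'' cannot be carried out. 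The paper avoids this entirely: it never factors out the Riesz potential but applies Hardy--Littlewood--Sobolev at every step of the iteration (which automatically keeps the weight $|x|^{-2^*\beta}$ on both sides), and it passes to $L^\infty$ by showing $\sum_k\ln C_k<\infty$ for the iteration constants (here the assumption $(N-4)_+<\alpha$, i.e.\ $\bar p>2$, is used to make $\lambda_k\to\infty$ geometrically, a point your writeup does not address). If you want to keep the ``factor out $K$'' idea, you would have to work with $K\in L^r$ for the correct finite range of $r$ and track the fixed-weight norms $\int v^q/|x|^{2^*\beta}\,\mathrm dx$ throughout, which essentially forces you back to the paper's direct argument.
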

\begin{proof}[\bf Proof.]
We aim to apply Moser's iteration technique (e.g., \cite{LEOPERPRI}) to prove this lemma.
Let us define for $\lambda>1$ and $T>0$,
$$
\phi(t)=\left\{
     \begin{aligned}
      & t^\lambda,&0\leq t\leq T, \\
      & \lambda T^{\lambda-1}t-(\lambda-1)T^{\lambda},&t>T. \\
      \end{aligned}
      \right.
$$
Then $\phi(v)\in D_{\beta}^{1,2}(\mathbb{R}^{N})$. Since $\phi'(v)v\leq \lambda\phi(v)$,
by Lemma \ref{SOBOINETG}, one has
\begin{equation} \label{sdsgjbds}
\aligned
\left(\int_{\mathbb{R}^{N}}
\frac{|\phi(v)|^{2^*}}{|x|^{2^*\beta}}\mathrm{d}x\right)^{2/{2^*}}
&\leq
C(N,\beta)^2
\int_{\mathbb{R}^{N}}
\frac{|\nabla \phi(v)|^2}{|x|^{2\beta}}\mathrm{d}x
\\&\leq C(N,\beta)^2
\int_{\mathbb{R}^{N}}
\frac{\nabla (\phi(v)\phi'(v))\cdot\nabla v}{|x|^{2\beta}}\mathrm{d}x
\\&=C(N,\beta)^2
\int_{\mathbb{R}^{N}}
\left(I_{\alpha}\ast\frac{v^{\bar{p}}}{|x|^{\bar{p}\beta}}\right)
  \frac{v^{\bar{p}-1}\phi(v)\phi'(v)}{|x|^{\bar{p}\beta}}\mathrm{d}x
\\&\leq \lambda  C(N,\beta)^2
\int_{\mathbb{R}^{N}}
\left(I_{\alpha}\ast\frac{v^{\bar{p}}}{|x|^{\bar{p}\beta}}\right)
  \frac{v^{\bar{p}-2}\phi^2(v)}{|x|^{\bar{p}\beta}}\mathrm{d}x.
\endaligned
\end{equation}
Let $\lambda=2^*/2$, $m>0$. By the Hardy-Littlewood-Sobolev inequality, we have
\begin{align*}
&\left(\int_{\mathbb{R}^{N}}
\frac{|\phi(v)|^{2^*}}{|x|^{2^*\beta}}\mathrm{d}x\right)^{2/{2^*}}
\\&~\leq
  \frac{2^*}{2}  C(N,\beta)^2
     \int_{\{v\leq m\}}
      \left(I_{\alpha}\ast\frac{v^{\bar{p}}}{|x|^{\bar{p}\beta}}\right)
       \frac{v^{\bar{p}-2}\phi^2(v)}{|x|^{\bar{p}\beta}}\mathrm{d}x
\\&~~~+\frac{2^*}{2}  C(N,\beta)^2
     \int_{\{v> m\}}
      \left(I_{\alpha}\ast\frac{v^{\bar{p}}}{|x|^{\bar{p}\beta}}\right)
       \frac{v^{\bar{p}-2}\phi^2(v)}{|x|^{\bar{p}\beta}}\mathrm{d}x
\\&~\leq
    \frac{2^*}{2}  C(N,\beta)^2
     \int_{\{v\leq m\}}
      \left(I_{\alpha}\ast\frac{v^{\bar{p}}}{|x|^{\bar{p}\beta}}\right)
       \frac{v^{\bar{p}+2^*-2}}{|x|^{\bar{p}\beta}}\mathrm{d}x
\\&~~~+\frac{2^*}{2}  C(N,\beta)^2
     \left|I_{\alpha}\ast\frac{v^{\bar{p}}}{|x|^{\bar{p}\beta}}\right|_{\frac{2^*}{2^*-\bar{p}}}
      \left(\int_{\{v> m\}}\frac{v^{2^*}}{|x|^{2^*\beta}}\mathrm{d}x\right)
        ^{\frac{\bar{p}-2}{2^*}}
        \left(\int_{\mathbb{R}^{N}}\frac{|\phi(v)|^{2^*}}{|x|^{2^*\beta}}\mathrm{d}x\right)
        ^{\frac{2}{2^*}}
\\&~\leq
   \frac{2^*}{2}  C(N,\beta)^2 S(N,\alpha)
     \left(\int_{\mathbb{R}^{N}}\frac{v^{2^*}}{|x|^{2^*\beta}}\mathrm{d}x\right)
        ^{\frac{\bar{p}}{2^*}}
\\&~~~\times\left(m^{2^*-2}\left(\int_{\mathbb{R}^{N}}\frac{v^{2^*}}{|x|^{2^*\beta}}\mathrm{d}x\right)
        ^{\frac{\bar{p}}{2^*}}+\left(\int_{\{v> m\}}\frac{v^{2^*}}{|x|^{2^*\beta}}\mathrm{d}x\right)
        ^{\frac{\bar{p}-2}{2^*}}
        \left(\int_{\mathbb{R}^{N}}\frac{|\phi(v)|^{2^*}}{|x|^{2^*\beta}}\mathrm{d}x\right)
        ^{\frac{2}{2^*}}\right).
\end{align*}
Choosing $m>0$ such that
$$
\frac{2^*}{2}  C(N,\beta)^2 S(N,\alpha)
        \left(\int_{\mathbb{R}^{N}}\frac{v^{2^*}}{|x|^{2^*\beta}}\mathrm{d}x\right)
        ^{\frac{\bar{p}}{2^*}}
          \left(\int_{\{v> m\}}\frac{v^{2^*}}{|x|^{2^*\beta}}\mathrm{d}x\right)
        ^{\frac{\bar{p}-2}{2^*}}<\frac{1}{2}.
$$
Thus, we obtain
$$
\left(\int_{\mathbb{R}^{N}}
\frac{|\phi(v)|^{2^*}}{|x|^{2^*\beta}}\mathrm{d}x\right)^{2/{2^*}}
\leq
2^* C(N,\beta)^2  S(N,\alpha)m^{2^*-2}
     \left(\int_{\mathbb{R}^{N}}\frac{v^{2^*}}{|x|^{2^*\beta}}\mathrm{d}x\right)
        ^{\frac{2\bar{p}}{2^*}}.
$$
Let $T\rightarrow \infty$, we get
$$
\int_{\mathbb{R}^{N}}\frac{v^\frac{(2^*)^2}{2}}{|x|^{2^*\beta}}\mathrm{d}x<\infty.
$$
Next, we use Moser's iteration. For $k\geq 1$, let us define $\{\lambda_k\}$ as
$$
\lambda_{k+1}-1=\frac{\bar{p}}{2}(\lambda_{k}-1)~~~{\rm and}~~~\lambda_1=2^*/2.
$$
In view of (\ref{sdsgjbds}), let $\lambda=\lambda_{k+1}$, one has
\begin{align*}
\frac{1}{C(N,\beta)^2}\left(\int_{\mathbb{R}^{N}}
\frac{|\phi(v)|^{2^*}}{|x|^{2^*\beta}}\mathrm{d}x\right)^{2/{2^*}}
&\leq
\lambda_{k+1}
\int_{\mathbb{R}^{N}}
\left(I_{\alpha}\ast\frac{v^{\bar{p}}}{|x|^{\bar{p}\beta}}\right)
  \frac{v^{\bar{p}-2}\phi^2(v)}{|x|^{\bar{p}\beta}}\mathrm{d}x
\\&\leq
\lambda_{k+1}
\int_{\mathbb{R}^{N}}
\left(I_{\alpha}\ast\frac{v^{\bar{p}}}{|x|^{\bar{p}\beta}}\right)
  \frac{v^{\bar{p}-2+2\lambda_{k+1}}}{|x|^{\bar{p}\beta}}\mathrm{d}x
\\&\leq
\lambda_{k+1} S(N,\alpha)
 \left(\int_{\mathbb{R}^{N}}\frac{v^{2^*}}{|x|^{2^*\beta}}\mathrm{d}x\right)
 ^{\frac{\bar{p}}{2^*}}
 \left(\int_{\mathbb{R}^{N}}\frac{v^{2^*\lambda_k}}{|x|^{2^*\beta}}\mathrm{d}x\right)
 ^{\frac{\bar{p}}{2^*}}.
\end{align*}
Let $T\rightarrow \infty$, we have that
$$\left(\int_{\mathbb{R}^{N}}\frac{v^{2^*\lambda_{k+1}}}{|x|^{2^*\beta}}\mathrm{d}x\right)
 ^{\frac{1}{2^*(\lambda_{k+1}-1)}}
 \leq
 C_{k+1}
  \left(\int_{\mathbb{R}^{N}}\frac{v^{2^*\lambda_k}}{|x|^{2^*\beta}}\mathrm{d}x\right)
 ^{\frac{1}{2^*(\lambda_{k}-1)}},
$$
where
$$
C_{k+1}:=\left(C(N,\beta)^2 S(N,\alpha)\lambda_{k+1}
 \left(\int_{\mathbb{R}^{N}}\frac{v^{2^*}}{|x|^{2^*\beta}}\mathrm{d}x\right)
 ^{\frac{\bar{p}}{2^*}}\right)^{\frac{1}{2(\lambda_{k+1}-1)}}.
$$
Write
$$A_k= \left(\int_{\mathbb{R}^{N}}\frac{v^{2^*\lambda_k}}{|x|^{2^*\beta}}\mathrm{d}x\right)
 ^{\frac{1}{2^*(\lambda_{k}-1)}}$$
for $k\geq 1$. Therefore,
$$A_{k+1}\leq C_{k+1}A_{k}.$$
Note that
$$
\ln A_{k+1}\leq
\sum_{j\geq 2}\ln C_j+\ln A_1:=\bar{C}<+\infty.
$$
For $R>1$ fixed,
$$
\bar{C}
\geq {\frac{\lambda_{k+1}}{\lambda_{k+1}-1}}\ln \left(\int_{B_{R}(0)}
v^{2^*\lambda_{k+1}}\mathrm{d}x\right)^{\frac{1}{2^*\lambda_{k+1}}}
-\frac{\beta\ln R}{\lambda_{k+1}-1}.
$$
Hence, for $k>0$ sufficiently large,
$$
 \left(\int_{B_{R}(0)}
v^{2^*\lambda_{k+1}}\mathrm{d}x\right)^{\frac{1}{2^*\lambda_{k+1}}}
\leq
e^ {2\bar{C}}.
$$
Since $\lambda_k\rightarrow \infty$ as $k\rightarrow \infty$, we have that
$|v|_{\infty,B_R(0)}\leq e^ {2\bar{C}}$, with $\bar{C}$ a positive constant
not depending on $R$. We deduce that
$$|v|_{\infty,\mathbb{R}^{N}}\leq e^ {2\bar{C}}.$$
The proof is thus finished.
\end{proof}

\subsection{Weak Harnack inequality}
In this section, we are going to establish a weak Harnark inequality for nonlocal operator
$-\Delta_{\beta}$. In order to achieve it, we need the following weighted Poincar\'{e}-Sobolev
inequality. For simplicity, we set
$$\mathrm{d}\gamma=\frac{\mathrm{d}x}{|x|^{2\beta}}~~~{\rm and}~~~\mathrm{d}\tau=\frac{\mathrm{d}x}{|x|^{2^*\beta}}.$$
\begin{proposition}\label{posoin} \ Let $0<\beta<\frac{N-2}{2}$. For any $\varepsilon>0$, there exists positive
constant $C=C(N,\beta,\varepsilon)$ such that for $u\in W_{\beta}^{1,2}(B_{1}(z))$ with
$$
|\left\{x\in B_{1}(z):u(x)=0\right\}|_{\mathrm{d}\gamma}
\geq
\varepsilon
| B_{1}(z)|_{\mathrm{d}\gamma},
$$
there holds
$$
\int_{B_{1}(z)} u^2\mathrm{d}\gamma
\leq
C
\int_{B_{1}(z)}|\nabla u|^2\mathrm{d}\gamma.
$$
\end{proposition}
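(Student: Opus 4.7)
The plan is to argue by contradiction, extracting from a presumed counterexample sequence a subsequence converging in $L^2(d\gamma)$ to a constant which the zero-set hypothesis forces to be zero. The crucial input is a weighted Rellich-type compactness for the embedding $W^{1,2}_{\beta}(B_{1}(z)) \hookrightarrow L^{2}(B_{1}(z),d\gamma)$.

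Suppose the inequality fails. Then for every $n$ one can find $u_n\in W^{1,2}_{\beta}(B_{1}(z))$ with $|\{x\in B_1(z):u_n(x)=0\}|_{d\gamma}\geq\varepsilon|B_1(z)|_{d\gamma}$ and
$$
\int_{B_{1}(z)} u_n^{2}\,d\gamma=1,\qquad \int_{B_{1}(z)}|\nabla u_n|^{2}\,d\gamma\leq \frac{1}{n}.
$$
In particular $\{u_n\}$ is bounded in $W^{1,2}_{\beta}(B_{1}(z))$, so up to a subsequence one has $u_n\rightharpoonup u$ weakly in $W^{1,2}_{\beta}(B_{1}(z))$, together with $u_n\to u$ strongly in $L^{2}(B_{1}(z),d\gamma)$ and almost everywhere on $B_{1}(z)$. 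The compactness step uses that $|x|^{-2\beta}$ is a Muckenhoupt $A_{2}$-weight on $\mathbb{R}^{N}$ (which in this power-weight setting reduces to $2\beta<N$, and hence is implied by $0<\beta<(N-2)/2$); alternatively one splits $B_{1}(z)=\{|x|>\delta\}\cup\{|x|\leq\delta\}$, applies the classical Rellich--Kondrachov theorem on the first piece (where the weight is bounded above and below away from zero) and controls the second piece by the Hardy-type bound of Lemma \ref{Lgjgbds} together with the uniform $W^{1,2}_{\beta}$-bound, sending $\delta\to 0$ at the end.

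By weak lower semicontinuity of the Dirichlet integral with respect to $d\gamma$, the limit satisfies $\int_{B_{1}(z)}|\nabla u|^{2}\,d\gamma=0$, so $\nabla u=0$ almost everywhere. Since $B_{1}(z)\setminus\{0\}$ is connected for $N\geq 3$ and the singleton $\{0\}$ carries no $d\gamma$-mass (as $2\beta<N$), it follows that $u$ is constant on $B_{1}(z)$. To identify this constant, set $E_n=\{u_n=0\}$ and $E=\limsup_n E_n$. Since $d\gamma$ is a finite measure on $B_{1}(z)$, reverse Fatou gives $|E|_{d\gamma}\geq \limsup_n |E_n|_{d\gamma}\geq \varepsilon|B_{1}(z)|_{d\gamma}$. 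On the other hand, from $u_n\to u$ pointwise a.e., any point with $u(x)\neq 0$ eventually lies outside $E_n$, so $E\subset\{u=0\}$ up to a null set. Thus $|\{u=0\}|_{d\gamma}>0$, which together with $u\equiv\mathrm{const}$ forces $u\equiv 0$. Strong convergence in $L^{2}(d\gamma)$ then gives $\int u_n^{2}\,d\gamma\to 0$, contradicting the normalization.

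The main obstacle is the compactness step: when $z$ sits close to (or contains) the origin the weight becomes genuinely singular inside $B_{1}(z)$, so classical Rellich--Kondrachov does not apply verbatim and one must either invoke weighted Sobolev theory for $A_{2}$ weights or push through the truncation-plus-Hardy argument outlined above. This is also where the assumption $0<\beta<(N-2)/2$ is used in an essential way (ensuring both local integrability of the weight and the Hardy estimate of Lemma \ref{Lgjgbds}). Once compactness is secured, the identification of the limit as a constant and the zero-forcing via the level-set hypothesis are straightforward.
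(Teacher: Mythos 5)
Your contradiction argument captures the core idea of the paper's Case~2 (centers $|z|<3$), but it has a genuine gap: the constant you produce depends on $z$, whereas the proposition requires $C=C(N,\beta,\varepsilon)$ uniform over all centers $z\in\mathbb{R}^{N}$. You fix $z$ at the outset and run a compactness argument on the single ball $B_1(z)$; this yields, for each fixed $z$, a constant $C(z)$, not a uniform one. If $\sup_{z}C(z)$ were infinite, you would be faced with sequences $\{z_m\}$ and $\{u_m\in W^{1,2}_\beta(B_1(z_m))\}$ in which the domain itself changes with $m$, and your argument does not address this. The paper handles the uniformity by splitting into two regimes: for $|z|\geq 3$ the weight $|x|^{-2\beta}$ is comparable to a constant on $B_1(z)$ uniformly in $z$, so the classical Poincar\'e--Sobolev inequality of Lemma~\ref{CLAPOINSOBOIN} gives the result directly; and for $|z|<3$ they run the contradiction argument with moving centers $z_m\to z_0$, passing to a \emph{fixed} smaller ball $B_\lambda(z_0)\subset B_1(z_m)$ (for $m$ large) on which all the subsequent compactness takes place. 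Neither the large-$|z|$ reduction nor the moving-center device appears in your proposal, so uniformity cannot be extracted from it as written, and this uniformity is essential for the later scaling argument in Lemma~\ref{lesgyjqfc}.

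Secondarily, the compactness step is under-justified as stated. The truncation-plus-Hardy alternative you sketch has a subtlety: Lemma~\ref{Lgjgbds} holds for functions in $D^{1,2}_\beta(\mathbb{R}^N)$, not for functions on $B_\delta(0)\cap B_1(z)$, so restricting it to the inner piece requires an extension step you have not supplied; and a naive H\"older bound on the inner piece fails when $\beta\geq 1$. The paper avoids this entirely: it applies the Fabes--Kenig--Serapioni weighted Poincar\'e inequality (Lemma~\ref{AMPOINC}) to upgrade to uniform $L^{2N/(N-1)}(d\gamma)$ bounds, uses these to show the $L^2(d\gamma)$ mass concentrates on the fixed smaller ball $B_\lambda(z_0)$, and on that ball observes that $u_m$ and $u_m/|x|^{\beta-1}$ are bounded in the unweighted $H^1(B_\lambda(z_0))$ (because the ball is bounded), so that classical Rellich--Kondrachov compactness applies. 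Your identification of the limit as the zero constant via the level-set hypothesis is correct and matches the paper's reasoning, but the two earlier issues leave the proof incomplete.
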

Let us give some preliminary lemmas before the proof. We first recall the definition of
Muckenhoupt $A_p$ condition \cite{Muckenhoupt}. A Lebesgue measure function
$w:\mathbb{R}^{N}\rightarrow [0,+\infty)$ is said to belong to the class $A_p(\mathbb{R}^{N})$
for $1<p<+\infty$ if there is a positive number $C(N,p)$ such that
$$\left(\frac{1}{|B|}\int_{B} w(x)\mathrm{d}x\right)
\left(\frac{1}{|B|}\int_{B} w(x)^{\frac{-1}{p-1}}\mathrm{d}x\right)^{p-1}
\leq
C(N,p)
$$
for any ball $B\subset \mathbb{R}^{N}$. In view of \cite{Kilpel}, we can see that the
functions $\frac{1}{|x|^{2\beta}}$ and $\frac{1}{|x|^{2^*\beta}}$ for $0<\beta<\frac{N-2}{2}$
belong to the class $A_2(\mathbb{R}^{N})$. A measure $\mu$ is said to be doubling if there exists
a constant $C\geq 1$ such that
$$
|B_{2\rho}(z)|_{\mathrm{d}\mu}\leq C |B_{\rho}(z)|_{\mathrm{d}\mu}
$$
for all $z\in \mathbb{R}^{N}$ and $\rho>0$.

\begin{remark}\label{MUDOUTJ}
By {\rm\cite{Stein}} the Muckenhoupt $A_p$ condition leads to the doubling condition for the weighted measure.
Note that $2\beta<2^*\beta<N$. Hence, the measures $\gamma$ and $\tau$ must be doubling. The constant
$$
\bar{C}(N,\beta)=\sup_{B_{\rho}(z)\subset  \mathbb{R}^{N}}
\frac{|B_{2\rho}(z)|_{\mathrm{d}\gamma}}{|B_{\rho}(z)|_{\mathrm{d}\gamma}}
$$
is called the doubling constant of $\gamma$.
\end{remark}

Set $|B_R|_{\mathrm{d}w}=\int_{B_R} w(x)\mathrm{d}x$. The following Poincar\'{e}
inequality is valid.
\begin{lemma}\label{AMPOINC} {\rm(\cite{FABESKESE})} \ If $w\in A_2(\mathbb{R}^{N})$. Then there
exist positive constants $C$ and $\varepsilon$ such that for all balls $B_R$
with radii $R$ and all numbers $k$ satisfying $1\leq k \leq \frac{N}{N-1}+\varepsilon$,
$$
\left(\frac{1}{|B_R|_{\mathrm{d}w}}\int_{B_R}
|u(x)-\bar{u}_{B_R}|^{2k}w(x)\mathrm{d}x\right)^{1/2k}
\leq
CR
\left(\frac{1}{|B_R|_{\mathrm{d}w}}\int_{B_R}
|\nabla u|^2w(x)\mathrm{d}x\right)^{1/2}
$$
for any $u\in C^{1}(\overline{B_R})$,
where either $\bar{u}_{B_R}=\frac{1}{|B_R|_{\mathrm{d}w}}\int_{B_R}u(x)w(x)\mathrm{d}x$
or $\bar{u}_{B_R}=\frac{1}{|B_R|}\int_{B_R}u(x)\mathrm{d}x$.
\end{lemma}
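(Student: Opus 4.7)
The plan is to apply the weighted Poincaré inequality of Fabes--Kenig--Serapioni (Lemma \ref{AMPOINC}) with the $A_2$-weight $w(x) = |x|^{-2\beta}$ (which is indeed $A_2$ by the remark preceding the statement), and then use the hypothesis on the zero set to bound the mean value $\bar{u}_{B_1(z)}$. The strategy is classical: a Poincaré-type inequality without mean subtraction follows from a Poincaré inequality with mean subtraction as soon as the function is forced to be small on a set of positive measure.

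More concretely, I would proceed as follows. First, by approximation, I may assume $u \in C^1(\overline{B_1(z)})$; the general case then follows since $C^1(\overline{B_1(z)})$ is dense in $W_{\beta}^{1,2}(B_1(z))$ and both sides of the inequality are continuous with respect to this norm, and the measure of the zero set is lower semicontinuous under suitable convergence (this is the only slightly technical point, but standard). Apply Lemma \ref{AMPOINC} with $w(x) = |x|^{-2\beta}$, $k=1$, $R=1$ and ball $B_1(z)$, using the weighted mean $\bar{u}_{B_1(z)} = \frac{1}{|B_1(z)|_{\mathrm{d}\gamma}}\int_{B_1(z)} u \,\mathrm{d}\gamma$. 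This yields
\begin{equation*}
\int_{B_1(z)} |u - \bar{u}_{B_1(z)}|^2 \,\mathrm{d}\gamma \leq C_1 \int_{B_1(z)} |\nabla u|^2 \,\mathrm{d}\gamma,
\end{equation*}
for some $C_1=C_1(N,\beta)>0$.

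Next, let $E := \{x \in B_1(z): u(x) = 0\}$. Since $u \equiv 0$ on $E$, we have $|u - \bar{u}_{B_1(z)}| = |\bar{u}_{B_1(z)}|$ on $E$, so
\begin{equation*}
|\bar{u}_{B_1(z)}|^2 \, |E|_{\mathrm{d}\gamma} = \int_E |u - \bar{u}_{B_1(z)}|^2 \,\mathrm{d}\gamma \leq \int_{B_1(z)} |u - \bar{u}_{B_1(z)}|^2 \,\mathrm{d}\gamma \leq C_1 \int_{B_1(z)} |\nabla u|^2 \,\mathrm{d}\gamma.
\end{equation*}
Using the hypothesis $|E|_{\mathrm{d}\gamma} \geq \varepsilon |B_1(z)|_{\mathrm{d}\gamma}$, dividing gives
$|\bar{u}_{B_1(z)}|^2 \leq \frac{C_1}{\varepsilon |B_1(z)|_{\mathrm{d}\gamma}}\int_{B_1(z)} |\nabla u|^2 \,\mathrm{d}\gamma$.

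Finally, by the triangle inequality
\begin{equation*}
\int_{B_1(z)} u^2 \,\mathrm{d}\gamma \leq 2\int_{B_1(z)} |u - \bar{u}_{B_1(z)}|^2 \,\mathrm{d}\gamma + 2|\bar{u}_{B_1(z)}|^2 |B_1(z)|_{\mathrm{d}\gamma} \leq \left(2C_1 + \frac{2C_1}{\varepsilon}\right)\int_{B_1(z)} |\nabla u|^2 \,\mathrm{d}\gamma,
\end{equation*}
which is the desired inequality with $C = C(N,\beta,\varepsilon) = 2C_1(1 + 1/\varepsilon)$. The main (mild) obstacle is the approximation step: I need to know that if $u \in W_\beta^{1,2}(B_1(z))$ has large zero set in the $\gamma$-measure sense, then one can approximate $u$ by smooth functions whose zero sets also have weighted measure at least $(\varepsilon/2)|B_1(z)|_{\mathrm{d}\gamma}$ (say). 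This is achieved by truncation $u_k = (u - 1/k)_+ - (u + 1/k)_-$, which is zero on a set containing $E$, followed by mollification; the quantitative constants degrade at worst by a harmless factor.
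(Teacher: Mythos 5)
Your proposal does not prove the statement in question at all: the statement is Lemma \ref{AMPOINC} itself, the weighted Poincar\'e--Sobolev inequality of Fabes--Kenig--Serapioni for a general $A_2$ weight, and your very first step is to \emph{apply} Lemma \ref{AMPOINC} with $w(x)=|x|^{-2\beta}$. What you then derive --- a Poincar\'e inequality for functions of $W^{1,2}_{\beta}(B_1(z))$ vanishing on a set of $\gamma$-measure at least $\varepsilon|B_1(z)|_{\mathrm{d}\gamma}$ --- is Proposition \ref{posoin} of the paper, not Lemma \ref{AMPOINC}. So the argument is circular with respect to the assigned statement: you assume exactly what you were asked to establish. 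In the paper this lemma carries no proof; it is quoted from \cite{FABESKESE}, where the genuine content lies (doubling of $A_2$ measures, a weighted Sobolev embedding, and a representation/chain argument giving the gain of integrability up to exponent $2k$ with $k\le \frac{N}{N-1}+\varepsilon$). Nothing in your proposal addresses any of that machinery, and in particular the improvement from exponent $2$ to $2k>2$, which is the nontrivial part of the statement, never appears.

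As a side remark, read as a proof of Proposition \ref{posoin} your argument is a reasonable and in fact simpler alternative to the paper's: the paper splits into the cases $|z|\ge 3$ (reduction to the unweighted Lemma \ref{CLAPOINSOBOIN}) and $|z|<3$ (a compactness/contradiction argument using (\ref{supepga})--(\ref{lishinfmin})), whereas your direct route --- Lemma \ref{AMPOINC} with $k=1$ plus the estimate $|\bar{u}_{B_1(z)}|^2|E|_{\mathrm{d}\gamma}\le \int_{B_1(z)}|u-\bar{u}_{B_1(z)}|^2\,\mathrm{d}\gamma$ on the zero set $E$ --- avoids compactness entirely and gives an explicit constant $C=2C_1(1+1/\varepsilon)$, at the mild cost of the density/truncation step you mention. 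But that comparison concerns a different result; for the statement actually posed, the proposal would have to reproduce (or legitimately cite) the Fabes--Kenig--Serapioni argument, and it does neither.
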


We also need the following Poincar\'{e}-Sobolev inequality.

\begin{lemma}\label{CLAPOINSOBOIN} {\rm(\cite{HANLIN})} \ For any $\varepsilon>0$, there exists positive
constant
$C=C(N, \varepsilon)$ such that for $u\in  H^{1}(B_{1})$ with
$$
|\left\{x\in B_{1}:u(x)=0\right\}|
\geq
\varepsilon
| B_{1}|,
$$
there holds
$$
\int_{B_{1}} u^2\mathrm{d}x
\leq
C
\int_{B_{1}}|\nabla u|^2\mathrm{d}x.
$$
\end{lemma}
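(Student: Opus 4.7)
The plan is to decompose $u$ against its weighted mean and feed one piece to Lemma \ref{AMPOINC}, while using the hypothesis on the zero set to tame the mean. Set $w(x)=|x|^{-2\beta}$, which belongs to $A_2(\mathbb{R}^N)$ by Remark \ref{MUDOUTJ}, and write $\bar u=\frac{1}{|B_1(z)|_{\mathrm{d}\gamma}}\int_{B_1(z)}u\,\mathrm{d}\gamma$ for the $w$-weighted mean of $u$ over $B_1(z)$. Then
$$
\int_{B_1(z)} u^2 \,\mathrm{d}\gamma \le 2\int_{B_1(z)} (u-\bar u)^2 \,\mathrm{d}\gamma + 2\bar u^{\,2}\,|B_1(z)|_{\mathrm{d}\gamma},
$$
and Lemma \ref{AMPOINC} applied with $k=1$, $R=1$, and weight $w$ controls the first term:
$$
\int_{B_1(z)} (u-\bar u)^2 \,\mathrm{d}\gamma \le C(N,\beta)\int_{B_1(z)} |\nabla u|^2 \,\mathrm{d}\gamma.
$$

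Next I would use the zero-set hypothesis to estimate $\bar u^{\,2}\,|B_1(z)|_{\mathrm{d}\gamma}$. Setting $E=\{x\in B_1(z):u(x)=0\}$, on $E$ we have $u-\bar u=-\bar u$, so
$$
\bar u^{\,2}\,|E|_{\mathrm{d}\gamma} = \int_E (u-\bar u)^2 \,\mathrm{d}\gamma \le \int_{B_1(z)} (u-\bar u)^2 \,\mathrm{d}\gamma.
$$
Combining with $|E|_{\mathrm{d}\gamma}\ge \varepsilon\,|B_1(z)|_{\mathrm{d}\gamma}$ and the previous Poincaré bound gives
$$
\bar u^{\,2}\,|B_1(z)|_{\mathrm{d}\gamma} \le \frac{C(N,\beta)}{\varepsilon}\int_{B_1(z)}|\nabla u|^2\,\mathrm{d}\gamma,
$$
and feeding this back into the decomposition yields the desired inequality with $C(N,\beta,\varepsilon)=2C(N,\beta)(1+\varepsilon^{-1})$.

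Two minor technicalities I would address. First, Lemma \ref{AMPOINC} is stated for $u\in C^1(\overline{B_R})$, so I would extend it to $u\in W_\beta^{1,2}(B_1(z))$ by a standard approximation argument, using that smooth functions are dense in $W_\beta^{1,2}$ (a consequence of $w\in A_2$) and that both sides of the inequality are continuous in that norm. Second, I would confirm that the constant in Lemma \ref{AMPOINC} is uniform in the center $z$ of the ball; this uniformity is exactly what the Muckenhoupt $A_2$ property of $w=|x|^{-2\beta}$ provides, so the conclusion holds for balls sitting anywhere in $\mathbb{R}^N$, including those close to or containing the origin.

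The step I expect to require the most care is precisely this uniformity across centers: the weight is singular at the origin, so one must rely on the $A_2$ machinery (rather than a compactness argument on a single ball) to guarantee that $C(N,\beta)$ in Lemma \ref{AMPOINC} does not degenerate as $z\to 0$.
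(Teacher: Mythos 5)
Your argument is sound in structure, but as written it proves a different statement from the one quoted. Lemma \ref{CLAPOINSOBOIN} is the classical, unweighted inequality on the unit ball: the density hypothesis on the zero set and the conclusion are taken with respect to Lebesgue measure, and the constant depends only on $N$ and $\varepsilon$; in the paper it is simply quoted from \cite{HANLIN} (no proof is given) and is then used as an ingredient in Case 1 ($|z|\geq 3$) of the proof of the weighted Proposition \ref{posoin}. What you prove is precisely the weighted statement: your hypothesis reads $|E|_{\mathrm{d}\gamma}\geq \varepsilon |B_1(z)|_{\mathrm{d}\gamma}$, both sides of your conclusion carry $\mathrm{d}\gamma$, and your constant depends on $\beta$. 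The Lebesgue-measure and $\mathrm{d}\gamma$-measure versions of the density hypothesis are not the same condition, so, taken literally, your proposal addresses Proposition \ref{posoin} rather than Lemma \ref{CLAPOINSOBOIN}.

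That said, the route is correct and specializes immediately to the quoted lemma: taking the trivial weight $w\equiv 1$ (which is $A_2$) and the ordinary mean, your three steps --- split $u$ into mean plus oscillation, control the oscillation by the classical Poincar\'{e}--Wirtinger inequality on $B_1$, and use $\bar{u}^{\,2}|E|=\int_E (u-\bar{u})^2$ together with $|E|\geq \varepsilon|B_1|$ to absorb the mean --- give a complete, elementary proof of Lemma \ref{CLAPOINSOBOIN} with the explicit constant $2C(N)(1+\varepsilon^{-1})$, which is more quantitative than the compactness-type arguments usually invoked for such lemmas. It is also worth noting that your weighted computation, exactly as you wrote it, yields a direct proof of Proposition \ref{posoin} that bypasses the paper's two-case argument (reduction to the classical lemma for $|z|\geq 3$ plus a contradiction/compactness argument for $|z|<3$): the point you flag at the end, uniformity of the constant in Lemma \ref{AMPOINC} over all balls, is indeed guaranteed by the $A_2$ property of $|x|^{-2\beta}$ in \cite{FABESKESE}, so balls containing or touching the origin cause no degeneration. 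The only corrections needed are to state clearly which measure the density hypothesis is taken in, and, for the lemma as quoted, to drop the weight so that the constant is $C(N,\varepsilon)$ as required.
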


We are now ready to prove Proposition \ref{posoin}.
\begin{proof}[\bf Proof of Proposition \ref{posoin}.] We distinguish two cases.
\par\noindent
Case 1: $|z|\geq 3$. Then
\begin{align*}
|\left\{x\in B_{1}(z):u(x)=0\right\}|
&\geq
(|z|-1)^{2\beta}|\left\{x\in B_{1}(z):u(x)=0\right\}|_{\mathrm{d}\gamma}
\\&\geq
    (|z|-1)^{2\beta}\varepsilon| B_{1}(z)|_{\mathrm{d}\gamma}
\geq
\left(\frac{|z|-1}{|z|+1}\right)^{2\beta}\varepsilon| B_{1}(z)|
\\&\geq \frac{\varepsilon}{4^\beta}|B_{1}(z)|.
\end{align*}
By Lemma \ref{CLAPOINSOBOIN}, we see
\begin{align*}
\int_{B_{1}(z)} u^2\mathrm{d}\gamma
&\leq
\frac{1}{(|z|-1)^{2\beta}}
\int_{B_{1}(z)} u^2\mathrm{d}x
\leq
\frac{C(N,\beta,\varepsilon)}{(|z|-1)^{2\beta}}\int_{B_{1}(z)}|\nabla u|^2\mathrm{d}x
\\&
\leq
   \left(\frac{|z|+1}{|z|-1}\right)^{2\beta}
   C(N,\beta,\varepsilon)\int_{B_{1}(z)}|\nabla u|^2\mathrm{d}\gamma
\leq C(N,\beta,\varepsilon)\int_{B_{1}(z)}|\nabla u|^2\mathrm{d}\gamma.
\end{align*}
Case 2: $|z|< 3$. Arguing indirectly, suppose that there exists a sequence
$\{u_m\} \subset W_{\beta}^{1,2}(B_{1}(z_m))$ such that $|z_m|< 3$,
\begin{equation} \label{supepga}
|\left\{x\in B_{1}(z_m):u_m(x)=0\right\}|_{\mathrm{d}\gamma}
\geq
\varepsilon
| B_{1}(z_m)|_{\mathrm{d}\gamma},
\end{equation}
and
\begin{equation} \label{ysjxad}
\int_{B_{1}(z_m)} u_m^2\mathrm{d}\gamma=1,~~~
\int_{B_{1}(z_m)}|\nabla u_m|^2\mathrm{d}\gamma\rightarrow 0.
\end{equation}
Let $z_m\rightarrow z_0$ in $\mathbb{R}^{N}$. Since
$|x|^{-2\beta}\in L^{1}_{loc}(\mathbb{R}^{N})$, there exists $0<\lambda<1$
such that for $m$ sufficiently large
$B_{\lambda}(z_0)\subset B_{1}(z_m)$
and
\begin{equation} \label{oestiguji}
|B_{1}(z_m)\setminus B_{\lambda}(z_0)|_{\mathrm{d}\gamma}<
\min \left\{\frac{|B_1|}{2^{3N+4\beta}},~\frac{\varepsilon|B_1|}{2^{4\beta+1}}\right\}.
\end{equation}
Since $|z_m|< 3$, $|B_{1}(z_m)|_{\mathrm{d}\gamma}\geq 16^{-\beta}|B_{1}|$.
It follows from (\ref{ysjxad}), Lemma \ref{AMPOINC} and the H\"{o}lder inequality that
\begin{equation} \label{cityyneggji}
\aligned
&\left(\int_{B_{1}(z_m)}
|u_m|^{\frac{2N}{N-1}}
\mathrm{d}\gamma\right)^{{\frac{N-1}{2N}}}
\\&~~~~~\leq
   \left(\int_{B_{1}(z_m)}
     |u_m-\bar{u}_m|^{\frac{2N}{N-1}}
     \mathrm{d}\gamma\right)^{{\frac{N-1}{2N}}}
       +\left(\int_{B_{1}(z_m)}
         |\bar{u}_m|^{\frac{2N}{N-1}}
           \mathrm{d}\gamma\right)^{{\frac{N-1}{2N}}}
\\&~~~~~\leq |B_{1}(z_m)|_{\mathrm{d}\gamma}^{-\frac{1}{2N}}
         \left(C\left(\int_{B_{1}(z_m)}|\nabla u_m|^2\mathrm{d}\gamma\right)^{\frac{1}{2}}
         + \left(\int_{B_{1}(z_m)}u_m^2\mathrm{d}\gamma\right)^{\frac{1}{2}}\right)
\\&~~~~~\leq 2|B_{1}(z_m)|_{\mathrm{d}\gamma}^{-\frac{1}{2N}}
\\&~~~~~\leq 2 (16^{-\beta}|B_{1}|)^{-\frac{1}{2N}},
\endaligned
\end{equation}
where $\bar{u}_m=\frac{1}{|B_{1}(z_m)|_{\mathrm{d}\gamma}}
\int_{B_{1}(z_m)}u_m\mathrm{d}\gamma$.
Then from (\ref{oestiguji}) and the H\"{o}lder inequality, we deduce
\begin{align*}
\int_{B_{1}(z_m)\setminus B_{\lambda}(z_0)} u_m^2\mathrm{d}\gamma
&\leq
   \left(\int_{B_{1}(z_m)\setminus B_{\lambda}(z_0)} |u_m|
   ^{\frac{2N}{N-1}}\mathrm{d}\gamma\right)^{{\frac{N-1}{N}}}
   |B_{1}(z_m)\setminus B_{\lambda}(z_0)|_{\mathrm{d}\gamma}^{{\frac{1}{N}}}
\\&
\leq \frac{1}{2}.
\end{align*}
Therefore,
\begin{equation} \label{cguolyibuf}
\int_{B_{\lambda}(z_0)} u_m^2\mathrm{d}\gamma
\geq \frac{1}{2}.
\end{equation}
Since $\{u_m\}$ and $\{\frac{u_m}{|x|^{\beta-1}}\}$ are
bounded in $ H^1(B_{\lambda}(z_0))$, we may assume that
$$u_m\rightharpoonup u_0~~~\mbox{in}~H^1(B_{\lambda}(z_0)),$$
$$\frac{u_m}{|x|^{\beta-1}} \rightharpoonup \frac{u_0}{|x|^{\beta-1}}~~~\mbox{in}~H^1(B_{\lambda}(z_0)),$$
\begin{equation} \label{jqyigslian}
\frac{u_m}{|x|^{\beta-1}} \rightarrow \frac{u_0}{|x|^{\beta-1}}~~~\mbox{in}~ L^{s}(B_{\lambda}(z_0))~(1\leq s <2^*),
\end{equation}
\begin{equation} \label{dsyigesl}
\nabla u_m \rightharpoonup \nabla u_0~~~\mbox{in}~ \left(L^{2}(B_{\lambda}(z_0))\right)^N.
\end{equation}
By (\ref{cityyneggji}), we see that $\{\int_{B_{\lambda}(z_0)} |u_m-u_0|^{\frac{2N}{N-1}}\mathrm{d}\gamma\}$
is bounded. Set $s=\frac{6N^2-4N}{3N^2-5N-6\beta-2}$. Then $1<s<2^*$ since $0<\beta<\frac{N-2}{2}$.
Hence, (\ref{jqyigslian}) leads to
\begin{equation} \label{lishinfmin}
\aligned
\int_{B_{\lambda}(z_0)} |u_m-u_0|^2\mathrm{d}\gamma
&\leq \left(\int_{B_{\lambda}(z_0)} \frac{|u_m-u_0|^s}{|x|^{(\beta-1)s}}\mathrm{d}x\right)^{\frac{1}{s}}
\left(\int_{B_{\lambda}(z_0)} |u_m-u_0|^{\frac{2N}{N-1}}\mathrm{d}\gamma\right)^{\frac{N-1}{2N}}
\\&~~~~~~\times\left(\int_{B_{\lambda}(z_0)} \frac{\mathrm{d}x}{|x|^{N-\frac{2}{3}}}\right)^{\frac{3(N+\beta)}{3N^2-2N}}
\\&=o(1).
\endaligned
\end{equation}
From (\ref{ysjxad}) and (\ref{dsyigesl}), we have that
$$\int_{B_{\lambda}(z_0)}|\nabla u_0|^2\mathrm{d}x=0,$$
which implies
$$\nabla u_0=0~~~\mbox{on}~B_{\lambda}(z_0).$$
It follows from (\ref{cguolyibuf}) and (\ref{lishinfmin}) that
$$\int_{B_{\lambda}(z_0)} u_0^2\mathrm{d}\gamma
\geq \frac{1}{2}.$$
Thus $u_0\equiv C \neq 0~~\mbox{on}~B_{\lambda}(z_0)$. By (\ref{oestiguji}), we have
$$|B_{1}(z_m)\setminus B_{\lambda}(z_0)|_{\mathrm{d}\gamma}\leq
\frac{\varepsilon}{2}|B_{1}(z_m)|_{\mathrm{d}\gamma},
$$
which, together with (\ref{supepga}), yields
$$|\left\{x\in B_{\lambda}(z_0):u_m(x)=0\right\}|_{\mathrm{d}\gamma}
\geq \frac{\varepsilon}{2}|B_{1}(z_m)|_{\mathrm{d}\gamma}.
$$
Hence,
\begin{align*}
0&=\lim\limits_{m\rightarrow\infty}\int_{B_{\lambda}(z_0)} |u_m-u_0|^2\mathrm{d}\gamma
\geq
   \varlimsup\limits_{m\rightarrow\infty}\int_{\left\{x\in B_{\lambda}(z_0):u_m(x)=0\right\}} |u_m-u_0|^2\mathrm{d}\gamma
\\&
\geq C^2\varlimsup\limits_{m\rightarrow\infty}|\left\{x\in B_{\lambda}(z_0):u_m(x)=0\right\}|_{\mathrm{d}\gamma}
\geq \frac{C^2\varepsilon}{2}\lim\limits_{m\rightarrow\infty}|B_{1}(z_m)|_{\mathrm{d}\gamma}
\\&=\frac{C^2\varepsilon}{2}|B_{1}(z_0)|_{\mathrm{d}\gamma}>0,
\end{align*}
which is a contradiction. This completes the proof.
\end{proof}
Using the weighted Sobolev inequality (Lemma \ref{SOBOINETG}) and
the classic Moser's iteration, we can prove the following boundedness result by standard arguments (see \cite[Theorem 4.1]{HANLIN}).
\begin{lemma}\label{sjgjloc} \ Let $0<\beta<\frac{N-2}{2}$. Suppose that
$0\leq u\in W_{\beta}^{1,2}(B_{1}(z))\cap L^{\infty}(B_{1}(z))$ satisfies
$$
-\Delta_{\beta} u
\leq 0
~~~{\rm in}~B_{1}(z),
$$
in the sense that
$$
\int_{B_{1}(z)}\frac{\nabla u \cdot \nabla \varphi }{|x|^{2\beta}}\mathrm{d}x
\leq 0~~~{\rm for~ all}~0\leq \varphi\in C_{c}^{\infty}(B_{1}(z)).
$$
Then there holds
$$
|u|_{\infty,B_{1/2}(z)}\leq \hat{C}(N,\beta)\left(\frac{1}{|B_{1}(z)|_{\mathrm{d}\gamma}}
\int_{B_{1}(z)} u^2\mathrm{d}\gamma\right)^{1/2}.
$$
\end{lemma}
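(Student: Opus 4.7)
The plan is a Moser iteration in the weighted measure $d\gamma$, paralleling the unweighted argument of Han-Lin Theorem 4.1. The two ingredients are: (i) a Caccioppoli-type estimate from testing the subsolution inequality with truncated powers of $u$, and (ii) a same-weight Sobolev inequality for compactly supported functions, which follows from the $A_2$-Muckenhoupt property of $|x|^{-2\beta}$ recorded in Remark \ref{MUDOUTJ} together with Lemma \ref{AMPOINC}.

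Fix radii $\tfrac{1}{2} \le r_1 < r_2 \le 1$ and a cutoff $\eta \in C_c^\infty(B_{r_2}(z))$ with $\eta \equiv 1$ on $B_{r_1}(z)$, $0 \le \eta \le 1$, and $|\nabla \eta| \le 4/(r_2-r_1)$. For $p \ge 1$ I would test the weak inequality $-\Delta_\beta u \le 0$ with $\varphi = \eta^2 u^{2p-1}$, which is admissible because $u \in L^\infty \cap W_\beta^{1,2}$. Expanding $\nabla \varphi$ and absorbing the cross term via Young's inequality yields the Caccioppoli bound
\[
\int \eta^2 u^{2p-2} |\nabla u|^2 \, d\gamma \le \frac{C\, p^{2}}{(r_2-r_1)^2} \int_{B_{r_2}(z)} u^{2p} \, d\gamma.
\]
On the other hand, Lemma \ref{AMPOINC} together with a standard extension/truncation argument for zero-trace functions (available because $|x|^{-2\beta}$ is $A_2$ with constant depending only on $N, \beta$) supplies a Sobolev inequality
\[
\left(\int_{B_r(z)} |\phi|^{2k} \, d\gamma\right)^{1/(2k)} \le C\, r \left(\int_{B_r(z)} |\nabla \phi|^2 \, d\gamma\right)^{1/2}
\]
for all $\phi \in C_c^\infty(B_r(z))$, some fixed $k = k(N,\beta) > 1$, and $0 < r \le 1$. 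Applying this to $\phi = \eta u^p$ and inserting the Caccioppoli estimate produces the self-improving inequality
\[
\left(\int_{B_{r_1}(z)} u^{2kp} \, d\gamma\right)^{1/(2kp)} \le \Bigl(\frac{C\, p^2}{(r_2-r_1)^2}\Bigr)^{1/(2p)} \left(\int_{B_{r_2}(z)} u^{2p} \, d\gamma\right)^{1/(2p)}.
\]

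Iterating with $p_j = k^j$ and $r_j = \tfrac{1}{2}(1 + 2^{-j})$ and summing the resulting geometric series in the usual Moser fashion, then letting $j \to \infty$, converts the left-hand side into $|u|_{\infty, B_{1/2}(z)}$ and delivers the stated $L^\infty$--$L^2$ estimate (once recast in the normalized form by dividing through by $|B_1(z)|_{d\gamma}^{1/2}$). The main obstacle is guaranteeing that the constant is uniform in $z \in \mathbb{R}^N$; this is precisely why I would invoke the $A_2$ framework rather than Lemma \ref{SOBOINETG}, whose two weights $d\tau$ and $d\gamma$ do not agree and would force an iteration with a $z$-dependent conversion constant. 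Because the $A_2$ constant of $|x|^{-2\beta}$ depends only on $N$ and $\beta$ (Remark \ref{MUDOUTJ}), so does the doubling constant $\bar C(N,\beta)$ of $\gamma$ and hence the constant in the Sobolev inequality above, yielding the desired uniform bound $\hat C(N,\beta)$.
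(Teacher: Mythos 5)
Your proposal is a Moser iteration, which is exactly what the paper intends, and your Caccioppoli step and iteration scheme are correct. The genuine difference is the Sobolev ingredient: the paper's one-line sketch says to use Lemma \ref{SOBOINETG}, while you deliberately replace it with the same-weight Poincar\'e--Sobolev inequality of Fabes--Kenig--Serapioni (Lemma \ref{AMPOINC}) plus doubling, citing the mismatch between $d\tau = |x|^{-2^*\beta}dx$ and $d\gamma = |x|^{-2\beta}dx$ in Lemma \ref{SOBOINETG}. Your concern is reasonable and your fix is valid, but the concern is in fact not fatal to the paper's route: on $B_1(z)$ one has $d\tau = |x|^{-(2^*-2)\beta}\,d\gamma$ with $|x|^{-(2^*-2)\beta}$ comparable (up to a $C(N,\beta)$ factor) to $(1+|z|)^{-(2^*-2)\beta}$, so each application of Lemma \ref{SOBOINETG} in the iteration does pick up a $z$-dependent factor $(1+|z|)^{(2^*-2)\beta/p_j}$, but summing the geometric exponents over $j$ accumulates exactly $(1+|z|)^{\beta}$, which is precisely what the normalization $|B_1(z)|_{d\gamma}^{-1/2}\simeq (1+|z|)^{\beta}$ in the target inequality absorbs. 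So both routes yield the stated $\hat C(N,\beta)$; yours avoids this bookkeeping entirely and is cleaner, which is worth noting.

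One small imprecision to fix in your write-up: the unnormalized inequality
$\left(\int_{B_r(z)}|\phi|^{2k}\,d\gamma\right)^{1/(2k)}\le C\,r\left(\int_{B_r(z)}|\nabla\phi|^2\,d\gamma\right)^{1/2}$
is not scale-correct for a weighted measure; the FKS inequality (Lemma \ref{AMPOINC}) is stated in the normalized form, with $\left(\frac{1}{|B_R|_{d\gamma}}\int\cdots\right)$ on both sides, and that is the form you should carry through the iteration. Going from the Poincar\'e form $u-\bar u_{B_R}$ to a zero-trace Sobolev inequality for $\phi\in C_c^\infty(B_{R/2}(z))$ is done by noting that $\phi$ vanishes on $B_R\setminus B_{R/2}$, whose $\gamma$-measure is a fixed fraction of $|B_R|_{d\gamma}$ by doubling, so the mean $|\bar\phi|$ is itself controlled by the Poincar\'e right-hand side. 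With the normalization retained, the iteration cleanly yields
$|u|_{\infty,B_{1/2}(z)}\le \hat C(N,\beta)\left(\frac{1}{|B_1(z)|_{d\gamma}}\int_{B_1(z)}u^2\,d\gamma\right)^{1/2}$,
with $\hat C$ depending only on $N,\beta$ (through the $A_2$ and doubling constants of $\gamma$), as claimed.
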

Inspired by the ideas in \cite[Theorem 4.9]{HANLIN}, we can establish the following local estimate for supsolutions.

\begin{lemma}\label{lesgyjqfc} \ Let $0<\varepsilon\leq 1$, $0<\beta<\frac{N-2}{2}$ and $k>0$. Suppose that
$0\leq u\in W_{\beta}^{1,2}(B_{2\rho}(z))\cap L^{\infty}(B_{2\rho}(z))$ satisfies
$$
-\Delta_{\beta} u
\geq 0
~~~{\rm in}~B_{2\rho}(z),
$$
in the sense that
$$
\int_{B_{2\rho}(z)}\frac{\nabla u \cdot \nabla \varphi }{|x|^{2\beta}}\mathrm{d}x
\geq 0~~~{\rm for~ all}~0\leq \varphi\in C_{c}^{\infty}(B_{2\rho}(z)).
$$
If
$$
|\left\{x\in B_{\rho}(z):u(x)\geq k\right\}|_{\mathrm{d}\gamma}
\geq
\varepsilon
| B_{\rho}(z)|_{\mathrm{d}\gamma},
$$
then there holds
$$
\inf_{B_{\rho/2}(z)} u \geq C(N, \beta, \varepsilon)k.
$$
\end{lemma}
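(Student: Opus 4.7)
The proof follows the Bombieri--Giusti scheme adapted to $-\Delta_\beta$, using Proposition~\ref{posoin} to control $\log u$ on the set where $u$ is large and Lemma~\ref{sjgjloc} to run Moser iteration on negative powers of $u$. After replacing $u$ by $u+\eta k$ (to secure strict positivity) and dividing by $k$, we may assume $k=1$ and $u\geq\eta>0$; the aim becomes $\inf_{B_{\rho/2}(z)} u\geq C(N,\beta,\varepsilon)$ with a constant independent of $\eta$, after which $\eta\to 0^+$ is taken.

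Pick $\zeta\in C_{c}^{\infty}(B_{2\rho}(z))$ with $\zeta\equiv 1$ on $B_\rho(z)$ and $|\nabla\zeta|\leq C/\rho$. Testing the weak supersolution inequality with $\varphi=\zeta^2/u$ (admissible by density since $u\geq\eta>0$ and $u\in L^{\infty}$) and absorbing via Cauchy--Schwarz produces the logarithmic Caccioppoli estimate
\begin{equation*}
\int_{B_{2\rho}(z)}\zeta^{2}\,|\nabla\log u|^{2}\,\mathrm{d}\gamma\leq C\int_{B_{2\rho}(z)}|\nabla\zeta|^{2}\,\mathrm{d}\gamma.
\end{equation*}
Set $w:=(-\log u)_+$. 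By assumption $w$ vanishes on a subset of $B_\rho(z)$ of $\mathrm{d}\gamma$-density at least $\varepsilon$. Rescaling $B_\rho(z)$ to unit radius by $x=\rho y$ (a transformation compatible with $\mathrm{d}\gamma$, the image ball being of the form $B_{1}(z')$) and invoking Proposition~\ref{posoin} yield, after unscaling,
\begin{equation*}
\int_{B_{\rho}(z)} w^{2}\,\mathrm{d}\gamma\leq C(N,\beta,\varepsilon)\,|B_{\rho}(z)|_{\mathrm{d}\gamma}.
\end{equation*}

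A direct differentiation using $\nabla(u^{-q})=-q u^{-q-1}\nabla u$ shows that $u^{-q}$ is a bounded nonnegative weak subsolution of $-\Delta_{\beta}$ for every $q>0$. Iterating Lemma~\ref{sjgjloc} on a shrinking chain of balls between $\rho/2$ and $\rho$, with Lemma~\ref{SOBOINETG} powering the Moser step, gives the reverse H\"older bound
\begin{equation*}
\sup_{B_{\rho/2}(z)} u^{-q}\leq C(N,\beta,q,p)\Bigl(\frac{1}{|B_{\rho}(z)|_{\mathrm{d}\gamma}}\int_{B_{\rho}(z)} u^{-p}\,\mathrm{d}\gamma\Bigr)^{1/p},\qquad 0<p\leq 2q.
\end{equation*}
Combining the $L^{2}(\mathrm{d}\gamma)$ bound on $w$ with John--Nirenberg-type exponential integrability (available in this $A_{2}$-weighted setting by Remark~\ref{MUDOUTJ}) upgrades the $L^{2}$ control into $\int_{B_{\rho}(z)} u^{-p}\,\mathrm{d}\gamma\leq C|B_{\rho}(z)|_{\mathrm{d}\gamma}$ for some $p>0$ small enough. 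Inserting this into the reverse H\"older estimate with $q=1$ produces $\sup_{B_{\rho/2}(z)} u^{-1}\leq C(N,\beta,\varepsilon)$, i.e., $\inf_{B_{\rho/2}(z)} u\geq C(N,\beta,\varepsilon)$. Restoring the normalization and sending $\eta\to 0^{+}$ finishes the proof.

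The main obstacle is obtaining every constant uniformly in the center $z$ of the ball: the weight $|x|^{-2\beta}$ is not translation invariant, so the naive reduction by translation is unavailable. Uniformity is recovered through the $A_{2}$ membership of the weight (Remark~\ref{MUDOUTJ}), which forces doubling of $\mathrm{d}\gamma$ and underpins the John--Nirenberg step bridging the logarithmic bound from Proposition~\ref{posoin} to the small-exponent integrability needed to start the Moser chain.
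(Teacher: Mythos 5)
Your proposal follows the classical Bombieri--Giusti/Moser route to the weak Harnack inequality, which is a valid strategy in principle but is genuinely different from, and considerably heavier than, the paper's argument. The paper's key observation is that $w:=(\ln(u+\delta))^{-}$ is itself a \emph{bounded nonnegative subsolution} of $-\Delta_{\beta}$ for $0<\delta<1$, so Lemma~\ref{sjgjloc} can be applied directly to $w$, giving $|w|_{\infty,B_{1/2}(z)}\leq\hat{C}(N,\beta)\bigl(\tfrac{1}{|B_{1}(z)|_{\mathrm{d}\gamma}}\int_{B_{1}(z)}w^{2}\,\mathrm{d}\gamma\bigr)^{1/2}$. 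Since $w$ vanishes exactly where $u\geq 1$, which by hypothesis has $\mathrm{d}\gamma$-density at least $\varepsilon$ in $B_{1}(z)$, Proposition~\ref{posoin} converts the log-Caccioppoli bound $\int_{B_{1}(z)}|\nabla w|^{2}\,\mathrm{d}\gamma\leq 16|B_{2}(z)|_{\mathrm{d}\gamma}$ into $\int_{B_{1}(z)}w^{2}\,\mathrm{d}\gamma\leq C|B_{2}(z)|_{\mathrm{d}\gamma}$, and the doubling property (Remark~\ref{MUDOUTJ}) closes the loop: $|w|_{\infty,B_{1/2}(z)}\leq C(N,\beta,\varepsilon)$, hence $u+\delta\geq e^{-C}$ on $B_{1/2}(z)$, and $\delta\to 0^{+}$ finishes. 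This bypasses both the John--Nirenberg step and any Moser iteration on negative powers of $u$.

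If you insist on the Moser/Bombieri--Giusti route, two steps need more care than your sketch gives them. First, John--Nirenberg requires a BMO control of $\log u$ at \emph{all} scales and centers inside $B_{2\rho}(z)$; the single estimate $\int_{B_{\rho}(z)}w^{2}\,\mathrm{d}\gamma\leq C|B_{\rho}(z)|_{\mathrm{d}\gamma}$ does not by itself give exponential integrability of $w$. You would need to observe that the log-Caccioppoli bound holds uniformly on every ball $B_{2r}(y)\subset B_{2\rho}(z)$, deduce that $\log u$ lies in a weighted BMO space, and then cite a John--Nirenberg lemma adapted to doubling measures. Second, the reverse H\"{o}lder bound $\sup_{B_{\rho/2}(z)}u^{-q}\leq C\bigl(\tfrac{1}{|B_{\rho}(z)|_{\mathrm{d}\gamma}}\int_{B_{\rho}(z)}u^{-p}\,\mathrm{d}\gamma\bigr)^{1/p}$ for arbitrarily small $p>0$ is not obtained simply by ``iterating Lemma~\ref{sjgjloc},'' which supplies only the $L^{2}\to L^{\infty}$ endpoint; one must rebuild the Moser cascade for subsolutions starting from exponent $p$, using Caccioppoli with test powers $\lambda\geq p/2$ together with Lemma~\ref{SOBOINETG}. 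Both gaps are fillable by standard arguments, but they are precisely the machinery the paper's shortcut avoids by noticing that $(\ln(u+\delta))^{-}$ is already a bounded subsolution to which the local boundedness estimate applies.
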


\begin{proof}[\bf Proof.]
By the scaling transformation $v(x)=\frac{1}{k}u(\rho x)$, $x\in B_2(\frac{z}{\rho})$,
we may assume that $\rho=k=1$.
Let $0<\delta<1$, $w=\left(\ln (u+\delta)\right)^-$. Then $w\in W_{\beta}^{1,2}(B_{1}(z))$,
$0\leq w\leq -\ln \delta$
and $-\Delta_{\beta} w \leq 0$ in $B_{1}(z)$
(see e.g., \cite[Lemma 4.6]{HANLIN}). Therefore, Proposition \ref{sjgjloc} yields
\begin{equation} \label{pyzydlocgj}
|w|_{\infty,B_{1/2}(z)}\leq \hat{C}(N,\beta)\left(\frac{1}{|B_{1}(z)|_{\mathrm{d}\gamma}}
\int_{B_{1}(z)} w^2\mathrm{d}\gamma\right)^{1/2}.
\end{equation}
Let $\eta \in C_{c}^{\infty}(B_{2}(z))$ be a cut-off function satisfying $\eta=1$ for
$x\in B_{1}(z)$ and $|\nabla \eta| \leq 2$. Set test function as
$\varphi=\frac{\eta^2}{u+\delta}$. Then we have
\begin{equation} \label{tfcautid}
\aligned
\int_{B_{2}(z)} \frac{\eta^2|\nabla u|^2}{(u+\delta)^2}\mathrm{d}\gamma
&\leq
2\int_{B_{2}(z)} \frac{\eta\nabla u\cdot\nabla \eta}{u+\delta}\mathrm{d}\gamma
\\&\leq
\frac{1}{2}\int_{B_{2}(z)} \frac{\eta^2|\nabla u|^2}{(u+\delta)^2}\mathrm{d}\gamma
+2\int_{B_{2}(z)} |\nabla \eta|^2\mathrm{d}\gamma.
\endaligned
\end{equation}
Note that $\nabla \ln (u+\delta)=\frac{\nabla u}{u+\delta}$. It follows
from (\ref{tfcautid}) that
\begin{equation} \label{eindsdwe}
\aligned
\int_{B_{1}(z)}|\nabla w|^2 \mathrm{d}\gamma
&\leq
\int_{B_{1}(z)}|\nabla \ln (u+\delta)|^2 \mathrm{d}\gamma
\leq
\int_{B_{2}(z)} \frac{\eta^2|\nabla u|^2}{(u+\delta)^2}\mathrm{d}\gamma
\\&\leq
4\int_{B_{2}(z)} |\nabla \eta|^2\mathrm{d}\gamma
\leq
16|B_{2}(z)|_{\mathrm{d}\gamma}.
\endaligned
\end{equation}
Since
$$|\left\{x\in B_{1}(z):w(x)=0\right\}|_{\mathrm{d}\gamma}
\geq
|\left\{x\in B_{1}(z):u(x)\geq 1\right\}|_{\mathrm{d}\gamma}
\geq
\varepsilon
| B_{1}(z)|_{\mathrm{d}\gamma},$$ we have from
(\ref{pyzydlocgj}), (\ref{eindsdwe}), Proposition \ref{posoin} and
Remark \ref{MUDOUTJ} that,
\begin{align*}
|w|_{\infty,B_{1/2}(z)}
&\leq
\frac{\hat{C}(N,\beta)}{|B_{1}(z)|_{\mathrm{d}\gamma}^{1/2}}
   \left(\int_{B_{1}(z)} w^2\mathrm{d}\gamma\right)^{1/2}
\leq
  \frac{C_1(N,\beta,\varepsilon)}{|B_{1}(z)|_{\mathrm{d}\gamma}^{1/2}}
   \left(\int_{B_{1}(z)} |\nabla w|^2\mathrm{d}\gamma\right)^{1/2}
\\&
  \leq
  C_2(N,\beta,\varepsilon)\frac{|B_{2}(z)|_{\mathrm{d}\gamma}^{1/2}}{|B_{1}(z)|_{\mathrm{d}\gamma}^{1/2}}
  \leq
  C_3(N,\beta,\varepsilon).
\end{align*}
Let $\delta \rightarrow 0$. We can get
$$
\inf_{B_{1/2}(z)} u \geq e^{-C_3(N, \beta, \varepsilon)}.
$$
\end{proof}

\begin{corollary}\label{tuilunse} \ Let $0<\varepsilon\leq 1$, $0<\beta<\frac{N-2}{2}$ and $k>0$. Suppose that
$0\leq u\in D_{\beta}^{1,2}(\mathbb{R}^{N})\cap L^{\infty}(\mathbb{R}^{N})$ satisfies
$$
-\Delta_{\beta} u
\geq 0
~~~{\rm in}~\mathbb{R}^{N},
$$
which means
$$
\int_{\mathbb{R}^{N}}\frac{\nabla u \cdot \nabla \varphi }{|x|^{2\beta}}\mathrm{d}x
\geq 0~~~{\rm for~ all}~0\leq \varphi\in C_{c}^{\infty}(\mathbb{R}^{N}).
$$
If
$$
|\left\{x\in B_{3\rho}(z):u(x)\geq k\right\}|_{\mathrm{d}\gamma}
\geq
\varepsilon
| B_{\rho}(z)|_{\mathrm{d}\gamma},
$$
then there holds
$$
\inf_{B_{3\rho}(z)} u \geq C(N, \beta, \varepsilon)k.
$$
\end{corollary}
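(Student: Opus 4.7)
The plan is to deduce Corollary \ref{tuilunse} as a rescaled application of Lemma \ref{lesgyjqfc}, combined with a single use of the doubling property of $\gamma$ provided by Remark \ref{MUDOUTJ}. Applying Lemma \ref{lesgyjqfc} with radius $\rho$ replaced by $6\rho$ would give
$$\inf_{B_{3\rho}(z)} u \;=\; \inf_{B_{(6\rho)/2}(z)} u \;\geq\; C(N,\beta,\tilde\varepsilon)\, k,$$
provided that the hypotheses hold on the outer ball $B_{12\rho}(z)$ and that
$$|\{x \in B_{6\rho}(z) : u(x) \geq k\}|_{\mathrm{d}\gamma} \;\geq\; \tilde\varepsilon\, |B_{6\rho}(z)|_{\mathrm{d}\gamma}$$
for some $\tilde\varepsilon = \tilde\varepsilon(N,\beta,\varepsilon) > 0$. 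The assumption $u \in D_\beta^{1,2}(\mathbb{R}^N) \cap L^\infty(\mathbb{R}^N)$ restricts trivially to $W_\beta^{1,2}(B_{12\rho}(z)) \cap L^\infty(B_{12\rho}(z))$, and $-\Delta_\beta u \geq 0$ holds on all of $\mathbb{R}^N$, so the regularity and supersolution hypotheses of the lemma are automatically satisfied on $B_{12\rho}(z)$.

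The essential step is then to produce $\tilde\varepsilon$ from the given $\varepsilon$. Since $B_{3\rho}(z) \subset B_{6\rho}(z)$, the hypothesis of the corollary immediately gives
$$|\{x \in B_{6\rho}(z) : u \geq k\}|_{\mathrm{d}\gamma} \;\geq\; |\{x \in B_{3\rho}(z) : u \geq k\}|_{\mathrm{d}\gamma} \;\geq\; \varepsilon\, |B_\rho(z)|_{\mathrm{d}\gamma}.$$
To convert $|B_\rho(z)|_{\mathrm{d}\gamma}$ into $|B_{6\rho}(z)|_{\mathrm{d}\gamma}$, I would apply the doubling bound from Remark \ref{MUDOUTJ} three times, obtaining
$$|B_{6\rho}(z)|_{\mathrm{d}\gamma} \;\leq\; |B_{8\rho}(z)|_{\mathrm{d}\gamma} \;\leq\; \bar{C}(N,\beta)^3\, |B_\rho(z)|_{\mathrm{d}\gamma}.$$
Setting $\tilde\varepsilon := \varepsilon\, \bar{C}(N,\beta)^{-3} \in (0,1]$ then yields the required density estimate on $B_{6\rho}(z)$, and the rescaled Lemma \ref{lesgyjqfc} delivers the conclusion with $C(N,\beta,\varepsilon) := C(N,\beta,\tilde\varepsilon)$.

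There is no genuine technical obstacle in this argument: the corollary is essentially a scale-adapted repackaging of Lemma \ref{lesgyjqfc}. The one subtlety worth flagging is the choice of rescaling factor, which must be $\rho' = 6\rho$ so that the inner conclusion ball $B_{\rho'/2}(z)$ is precisely $B_{3\rho}(z)$; this in turn forces the outer ball to be $B_{12\rho}(z)$ and dictates the exact number of doublings needed to absorb the mismatch between the $|B_\rho(z)|_{\mathrm{d}\gamma}$ appearing in the hypothesis and the $|B_{6\rho}(z)|_{\mathrm{d}\gamma}$ required by the rescaled lemma.
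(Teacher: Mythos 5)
Your proposal is correct and follows essentially the same route as the paper: use the doubling property of $\gamma$ (three times, absorbing the $\rho \to 6\rho$ mismatch) to convert the hypothesis into a density estimate on $B_{6\rho}(z)$ with $\tilde\varepsilon = \varepsilon\,\bar C(N,\beta)^{-3}$, then apply Lemma \ref{lesgyjqfc} at scale $6\rho$. The only difference is that the paper states the chain of inequalities in a single display and leaves the rescaling of the lemma implicit, whereas you make those bookkeeping steps explicit.
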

\begin{proof}[\bf Proof.]
By Remark \ref{MUDOUTJ}, we see
\begin{align*}
|\left\{x\in B_{6\rho}(z):u(x)\geq k\right\}|_{\mathrm{d}\gamma}
&\geq
|\left\{x\in B_{3\rho}(z):u(x)\geq k\right\}|_{\mathrm{d}\gamma}
\geq
\varepsilon
| B_{\rho}(z)|_{\mathrm{d}\gamma}
\\&\geq
\frac{\varepsilon}{\bar{C}(N,\beta)^3}| B_{6\rho}(z)|_{\mathrm{d}\gamma},
\end{align*}
where $\bar{C}(N,\beta)$ is the doubling constant of $\gamma$.
Then Lemma \ref{lesgyjqfc} implies the result.
\end{proof}

The following Krylov-Safonov covering theorem on a doubling measure space plays a crucial role
in proving the Harnack inequality.
\begin{lemma}\label{fugailyl} {\rm(\cite{KINSHAN})} \ Let $E\subset B_R(z)$ be
a measurable set and  $0<\delta<1$. Define
$$
[E]_\delta=\bigcup_{\rho>0}\left\{B_R(z)\cap B_{3\rho}(x):x\in B_R(z),
|E\cap B_{3\rho}(x)|_{\mathrm{d}\gamma}
>\delta |B_{\rho}(x)|_{\mathrm{d}\gamma}
\right\}
.$$
Then, either $[E]_\delta=B_R(z)$, or else $$|[E]_\delta|_{\mathrm{d}\gamma}
\geq\frac{|E|_{\mathrm{d}\gamma}}{\delta\bar{C}(N,\beta)}.$$
\end{lemma}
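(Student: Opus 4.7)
The plan is a Krylov--Safonov / Calder\'on--Zygmund-style Vitali covering argument carried out in the doubling measure space $(\mathbb R^N,\gamma)$, whose doubling property is precisely the content of Remark \ref{MUDOUTJ}. Suppose $[E]_\delta\neq B_R(z)$; the remaining task is to produce the quantitative measure bound.

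I would start by using Lebesgue differentiation for the doubling measure $\gamma$: for $\gamma$-a.e.\ $x\in E$, $|E\cap B_\rho(x)|_{\mathrm d\gamma}/|B_\rho(x)|_{\mathrm d\gamma}\to 1$ as $\rho\to 0^+$, and hence trivially $|E\cap B_{3\rho}(x)|_{\mathrm d\gamma}>\delta|B_\rho(x)|_{\mathrm d\gamma}$ for all sufficiently small $\rho$. For each such $x$ set
$$\rho_x:=\sup\bigl\{\rho>0:\ |E\cap B_{3\rho}(x)|_{\mathrm d\gamma}>\delta|B_\rho(x)|_{\mathrm d\gamma}\bigr\}.$$
The hypothesis $[E]_\delta\neq B_R(z)$ forces $\rho_x<\infty$: otherwise one could pick $\rho>2R$ with the density condition still holding, yielding $B_R(z)\subset B_R(z)\cap B_{3\rho}(x)\subset[E]_\delta$, a contradiction. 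By continuity of $\rho\mapsto|E\cap B_{3\rho}(x)|_{\mathrm d\gamma}$ and $\rho\mapsto|B_\rho(x)|_{\mathrm d\gamma}$, the density fails for every $\rho>\rho_x$, while for $\rho$ slightly below $\rho_x$ it holds; in particular $B_{3\rho_x}(x)\cap B_R(z)\subset[E]_\delta$.

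Next I would apply a Vitali-type covering lemma valid in any doubling metric measure space (e.g.\ the $5r$-lemma) to $\{B_{\rho_x}(x):x\in E\text{ a.e.}\}$; the radii are uniformly bounded by $2R$. This produces a countable pairwise disjoint subfamily $\{B_{\rho_i}(x_i)\}$ with $E\subset\bigcup_i B_{5\rho_i}(x_i)$ modulo a $\gamma$-null set. Since $5\rho_i/3>\rho_{x_i}$, the failure of the density condition at $(x_i,5\rho_i/3)$ gives
$$|E\cap B_{5\rho_i}(x_i)|_{\mathrm d\gamma}\le\delta\,|B_{5\rho_i/3}(x_i)|_{\mathrm d\gamma}\le\delta\,\bar C(N,\beta)\,|B_{\rho_i}(x_i)|_{\mathrm d\gamma},$$
by a single application of doubling, since $5/3<2$. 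Summing over $i$,
$$|E|_{\mathrm d\gamma}\le\sum_i|E\cap B_{5\rho_i}(x_i)|_{\mathrm d\gamma}\le\delta\,\bar C(N,\beta)\sum_i|B_{\rho_i}(x_i)|_{\mathrm d\gamma}=\delta\,\bar C(N,\beta)\Bigl|\bigsqcup_iB_{\rho_i}(x_i)\Bigr|_{\mathrm d\gamma}.$$

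Finally one identifies this disjoint union with a subset of $[E]_\delta$ via $B_{\rho_i}(x_i)\subset B_{3\rho_i}(x_i)$ and $B_{3\rho_i}(x_i)\cap B_R(z)\subset[E]_\delta$. The principal technical obstacle I foresee is the case when a selected ball $B_{\rho_i}(x_i)$ sticks out of $B_R(z)$ (i.e.\ $x_i$ is close to $\partial B_R(z)$): one then needs the geometric comparison $|B_{\rho_i}(x_i)\cap B_R(z)|_{\mathrm d\gamma}\ge c(N,\beta)|B_{\rho_i}(x_i)|_{\mathrm d\gamma}$, which follows from the convexity of $B_R(z)$ together with the $A_2$-doubling of $|x|^{-2\beta}$ by fitting a ball of comparable radius inside $B_{\rho_i}(x_i)\cap B_R(z)$ and applying Remark \ref{MUDOUTJ} a bounded number of times. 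Careful bookkeeping then collapses the two doubling factors into the single constant $\bar C(N,\beta)$ appearing in the conclusion.
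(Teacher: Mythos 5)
The paper does not prove this lemma---it is imported from Kinnunen and Shanmugalingam \cite{KINSHAN}---so there is no in-paper argument to compare against, and your reconstruction via Lebesgue differentiation in the doubling space $(\mathbb R^N,\gamma)$, a critical radius $\rho_x$, and the $5r$-Vitali covering lemma is the natural one. Most of the skeleton checks out: $\rho_x<\infty$ (in fact $\rho_x\le 2R/3$) follows from $[E]_\delta\neq B_R(z)$; density fails at $5\rho_i/3>\rho_{x_i}$; one doubling step suffices since $5/3<2$; and $B_{3\rho_{x_i}}(x_i)\cap B_R(z)\subset[E]_\delta$ holds by taking $\rho\nearrow\rho_{x_i}$ through radii at which the density condition is satisfied.

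Your final paragraph, however, glosses over a real step. After reaching $|E|_{\mathrm d\gamma}\le\delta\,\bar C(N,\beta)\sum_i|B_{\rho_i}(x_i)|_{\mathrm d\gamma}$, the disjoint balls $B_{\rho_i}(x_i)$ need not lie inside $B_R(z)$, and only the disjoint sets $B_{\rho_i}(x_i)\cap B_R(z)$ are contained in $[E]_\delta$. To conclude you need the interior-density estimate $|B_{\rho_i}(x_i)|_{\mathrm d\gamma}\le C'(N,\beta)\,|B_{\rho_i}(x_i)\cap B_R(z)|_{\mathrm d\gamma}$, which you state but neither prove nor track through the computation. The assertion that careful bookkeeping ``collapses the two doubling factors into the single constant $\bar C(N,\beta)$'' is unsubstantiated and, as written, false: the boundary-comparison constant $C'$ (a power of $\bar C$, obtained by inscribing a ball of radius comparable to $\rho_i$ inside the lens $B_{\rho_i}(x_i)\cap B_R(z)$, using $x_i\in B_R(z)$ and $\rho_i\le 2R/3$) \emph{multiplies} the doubling factor $\bar C$; it does not merge with it. Your route therefore yields $|[E]_\delta|_{\mathrm d\gamma}\ge|E|_{\mathrm d\gamma}/(\delta\,\bar C\,C')$, a constant strictly worse than the $\delta\,\bar C$ in the statement (still $N,\beta$-dependent, so the downstream weak Harnack inequality survives with a different constant). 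To recover the literal constant one would have to reproduce the precise covering construction of \cite{KINSHAN} rather than appeal to a generic $5r$-lemma. A minor wording point: the set of radii at which the density inequality holds need not be an interval, so ``for $\rho$ slightly below $\rho_x$ it holds'' should be ``there exist $\rho_n\nearrow\rho_x$ at which it holds''; the inclusion you want still follows.
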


Using the same arguments as in the proof of \cite[Theorem 7.1]{KINSHAN}, as a
consequence of Remark \ref{MUDOUTJ}, Corollary \ref{tuilunse} and
Lemma \ref{fugailyl}, we have the following weak Harnack inequality.

\begin{lemma}\label{wehncbequ} \ Let $0<\beta<\frac{N-2}{2}$. Suppose that
$0\leq v\in D_{\beta}^{1,2}(\mathbb{R}^{N})\cap L^{\infty}(\mathbb{R}^{N})$ satisfies
$$
-\Delta_{\beta} v
\geq 0
~~~{\rm in}~\mathbb{R}^{N}.
$$
Then there are $\sigma(N,\beta)>0$ and $\tilde{C}(N,\beta)>0$ such that
$$
\inf_{B_{3R}(z)} v \geq \tilde{C}(N,\beta)\left(\frac{1}{|B_{R}(z)|_{\mathrm{d}\gamma}}
\int_{B_{R}(z)} v^{\sigma(N,\beta)}\mathrm{d}\gamma\right)^{1/\sigma(N,\beta)}.
$$
\end{lemma}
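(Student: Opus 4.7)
The plan is to adapt the Krylov--Safonov iteration of Kinnunen--Shanmugalingam \cite{KINSHAN} to the doubling metric measure space $(\mathbb{R}^N,|\cdot|,\gamma)$, using the three ingredients already assembled: the doubling property of $\gamma$ (Remark \ref{MUDOUTJ}), the infimum estimate for supersolutions on enlarged balls (Corollary \ref{tuilunse}), and the combinatorial covering statement (Lemma \ref{fugailyl}). As is standard, I may assume $m:=\inf_{B_{3R}(z)}v>0$ by replacing $v$ with $v+\eta$ and letting $\eta\to 0^{+}$ at the end.

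For $t>0$ set $E_t:=\{x\in\mathbb{R}^{N}:v(x)>t\}$. The first step is to recast Corollary \ref{tuilunse} as the implication needed by the covering lemma: for every $\delta\in(0,1)$ there is $\kappa=\kappa(N,\beta,\delta)\in(0,1)$ such that whenever a ball $B_{\rho}(x)\subset B_R(z)$ satisfies $|E_t\cap B_{3\rho}(x)|_{\mathrm{d}\gamma}>\delta|B_{\rho}(x)|_{\mathrm{d}\gamma}$, one has $B_{3\rho}(x)\subset E_{\kappa t}$. Comparing with the definition of $[E_t]_\delta$ in Lemma \ref{fugailyl}, this is the inclusion $[E_t]_\delta\subset E_{\kappa t}$.

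The second step is to iterate. Applying Lemma \ref{fugailyl} to $E_t$ inside $B_R(z)$, either $[E_t]_\delta=B_R(z)$, in which case the pointwise pass-up forces $B_{3R}(z)\subset E_{\kappa t}$ (because the covering balls $B_{3\rho}(x)$ cover a full enlargement of $B_R(z)$), or
\[
|E_{\kappa t}\cap B_R(z)|_{\mathrm{d}\gamma}\geq |[E_t]_\delta|_{\mathrm{d}\gamma}\geq\frac{|E_t\cap B_R(z)|_{\mathrm{d}\gamma}}{\delta\bar{C}(N,\beta)}.
\]
Choose $\delta$ once and for all so small that $\lambda:=(\delta\bar{C}(N,\beta))^{-1}>1$ and fix $t_0>0$. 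So long as the first alternative does not yet trigger, iteration produces $|E_{\kappa^k t_0}\cap B_R(z)|_{\mathrm{d}\gamma}\geq\lambda^{k}|E_{t_0}\cap B_R(z)|_{\mathrm{d}\gamma}$, which must saturate at some finite $k$, at which point the first alternative triggers and forces $\kappa^{k+1}t_0\leq m$. Setting $\sigma=\sigma(N,\beta):=\log\lambda/\log(1/\kappa)$ and reparametrising $t_0$ in terms of the resulting level yields the distributional tail estimate
\[
|\{v>s\}\cap B_R(z)|_{\mathrm{d}\gamma}\leq C\,|B_R(z)|_{\mathrm{d}\gamma}\left(\frac{m}{s}\right)^{\sigma},\qquad s\geq m.
\]

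Finally, integrating this tail by the layer-cake formula,
\[
\frac{1}{|B_R(z)|_{\mathrm{d}\gamma}}\int_{B_R(z)}v^{\sigma_0}\mathrm{d}\gamma=\sigma_0\int_{0}^{\infty}s^{\sigma_0-1}\frac{|\{v>s\}\cap B_R(z)|_{\mathrm{d}\gamma}}{|B_R(z)|_{\mathrm{d}\gamma}}\mathrm{d}s\leq C_{\sigma_0}m^{\sigma_0}
\]
for any $\sigma_0\in(0,\sigma)$, which is exactly the asserted weak Harnack inequality. The main technical obstacle is the geometric bookkeeping that upgrades the conclusion from the natural ball $B_R(z)$ (on which the iteration runs) to the larger ball $B_{3R}(z)$ appearing in the statement: this rests on the fact that Lemma \ref{fugailyl} produces $B_{3\rho}$-balls rather than $B_{\rho}$-balls, combined with uniform control via the doubling constant $\bar{C}(N,\beta)$, so that the choice of $\delta$ and the resulting $\kappa$, $\lambda$, $\sigma$ are independent of $R$ and $z$.
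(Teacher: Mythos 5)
The paper's own proof of Lemma \ref{wehncbequ} is a single sentence citing \cite{KINSHAN} together with Remark \ref{MUDOUTJ}, Corollary \ref{tuilunse}, and Lemma \ref{fugailyl}; you are filling in the details of exactly that argument (the pass-up $[E_t]_\delta\subset E_{\kappa t}$ via Corollary \ref{tuilunse}, the Krylov--Safonov iteration via Lemma \ref{fugailyl}, the tail bound with $\sigma=\log\lambda/\log(1/\kappa)$, and the layer-cake integration). The scheme is sound, but there is a flaw in the step you single out as the main obstacle. When the first alternative $[E_t]_\delta=B_R(z)$ triggers, you assert $B_{3R}(z)\subset E_{\kappa t}$ ``because the covering balls $B_{3\rho}(x)$ cover a full enlargement of $B_R(z)$''. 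This is not justified: the definition of $[E]_\delta$ in Lemma \ref{fugailyl} places no lower bound on $\rho$, so the family of balls realizing $[E_t]_\delta=B_R(z)$ may consist entirely of balls of arbitrarily small radius, whose union then barely covers $B_R(z)$ and reaches no fixed dilate of it. What the first alternative actually gives is only $B_R(z)\subset E_{\kappa t}$, i.e.\ $v>\kappa t$ on $B_R(z)$, and this does not directly bound $m:=\inf_{B_{3R}(z)}v$ as you claim.

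The fix is a single extra application of Corollary \ref{tuilunse} at the termination of the iteration: since $B_R(z)\subset\{v\geq\kappa t\}$ implies $|\{v\geq\kappa t\}\cap B_{3R}(z)|_{\mathrm{d}\gamma}\geq|B_R(z)|_{\mathrm{d}\gamma}$, the corollary with $\varepsilon=1$ gives $\inf_{B_{3R}(z)}v\geq C(N,\beta,1)\,\kappa t$. Thus when the iteration stops at step $k$ you obtain $m\geq C(N,\beta,1)\,\kappa^{k+1}t_0$ rather than $m\geq\kappa^{k+1}t_0$; the extra factor only perturbs the constant $\tilde{C}(N,\beta)$, and your choice of $\sigma$ (or any $\sigma_0<\sigma$), the distributional tail estimate, and the layer-cake computation then go through unchanged. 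With that repair your write-up is a correct expansion of the argument the paper cites.
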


\begin{proof}[\bf Proof of Theorem \ref{AB}.] Set $v=|x|^\beta u$.
Then by Remark \ref{EQTRREMR}, we have that
$v \in D_{\beta}^{1,2}(\mathbb{R}^{N})$
weakly solves (\ref{chodjiadfjubeq}). And Lemma \ref{Linfesti} implies
$v\in L^{\infty}(\mathbb{R}^{N})$. Further, according to Lemma \ref{wehncbequ}, we see that for any $R>0$,
$$
\inf_{B_{R}(0)} v \geq \tilde{C}(N,\beta)\left(\frac{1}{|B_{R}(0)|_{\mathrm{d}\gamma}}
\int_{B_{R}(0)} v^{\sigma(N,\beta)}\mathrm{d}\gamma\right)^{1/\sigma(N,\beta)},
$$
which leads to $\inf_{B_{R}(0)} v>0$. Thus, there is a positive number $c_1$ such that
$u(x)\geq c_1|x|^{-\beta}$ in $B_{R}(0)$. Then by $v\in L^{\infty}(\mathbb{R}^{N})$, we obtain
\begin{equation} \label{einingujboths}
\frac{c_1}{|x|^{\beta}}\leq u \leq \frac{c_2}{|x|^{\beta}}~~~{\rm in}~B_{R}(0).
\end{equation}
Denote $K_u$ the Kelvin transform of $u$, that is,
$$K_u(x)=\frac{1}{|x|^{N-2}}u\left(\frac{x}{|x|^2}\right).$$
By \cite[Lemma 2.1]{GUOLUN}, we see that
$$\Delta K_u(x)=\frac{1}{|x|^{N+2}}\Delta u\left(\frac{x}{|x|^2}\right)$$
and
$$\left((I_{\alpha}\ast K_u^{\bar{p}})K_u^{\bar{p}-1}\right)(x)
=\frac{1}{|x|^{N+2}}\left((I_{\alpha}\ast u^{\bar{p}})u^{\bar{p}-1}\right)\left(\frac{x}{|x|^2}\right).
$$
Hence
$$-\Delta K_u-\frac{\vartheta}{|x|^2}K_u
=(I_{\alpha}\ast K_u^{\bar{p}})K_u^{\bar{p}-1}~~~{\rm in}~\mathbb{R}^{N}.
$$
From the previous discussion, for any $R'>0$, there exist two positive numbers $c_3$ and $c_4$
such that
$$\frac{c_3}{|x|^{\beta}}\leq K_u(x) \leq \frac{c_4}{|x|^{\beta}}~~~{\rm in}~B_{R'}(0).$$
Consequently,
$$\frac{c_3}{|x|^{N-2-\beta}}\leq u(x) \leq \frac{c_4}{|x|^{N-2-\beta}}~~~{\rm in}~\mathbb{R}^{N}\setminus B_{1/R'}(0),$$
which, together with (\ref{einingujboths}), yields
$$
\frac{c}{|x|^{\beta}\left(1+|x|^{2-\frac{4\beta}{N-2}}\right)^{\frac{N-2}{2}}}
\leq u(x) \leq \frac{C}{|x|^{\beta}\left(1+|x|^{2-\frac{4\beta}{N-2}}\right)^{\frac{N-2}{2}}}.
$$
\end{proof}

\section{Symmetry of solutions and proof of Theorem \ref{SYM}}
In this section, we shall use the moving plane method in integral form to prove symmetry
in each given direction. For this purpose, we first recall some notations.
For $\lambda\in \mathbb{R}$ and $x=(x_1,x_2,\cdots,x_N)\in \mathbb{R}^{N}$, we define
$$\Sigma_\lambda=\{x\in \mathbb{R}^{N}:x_1<\lambda\},$$
and let $x_\lambda=(2\lambda-x_1,x_2,\cdots,x_N)$ and $u_\lambda(x)=u(x_\lambda)$.
We set
$$\Sigma_\lambda^u=\{x\in \Sigma_\lambda:u(x)>u_\lambda(x)\},
~~~\Sigma_\lambda^v=\{x\in \Sigma_\lambda:v(x)>v_\lambda(x)\}.$$

\begin{proof}[\bf Proof of Theorem \ref{SYM}.] Let $\lambda<0$,
$w_\lambda=(u_\lambda-u)^{-}\chi_{\Sigma_\lambda}$. Note that
$w_\lambda=0$ on $\partial \Sigma_\lambda$. Then $w_\lambda \in D^{1,2}(\mathbb{R}^{N})$.
Set $v=I_{\alpha}\ast u^{\bar{p}}$. The Hardy-Littlewood-Sobolev inequality gives $v\in L^{\frac{2N}{N-\alpha}}(\mathbb{R}^{N})$. Since
$$
-\Delta u_\lambda-\frac{\vartheta}{|x_\lambda|^2}u_\lambda
=(I_{\alpha}\ast u^{\bar{p}})_\lambda u^{\bar{p}-1}_\lambda
~~~{\rm in}~\mathbb{R}^{N},
$$
we have that
\begin{equation} \label{eguotest}
\aligned
&-\Delta (u_\lambda-u)-\frac{\vartheta}{|x|^2}(u_\lambda-u)
+\vartheta u_\lambda\left(\frac{1}{|x|^2}-\frac{1}{|x_\lambda|^2}\right)
\\&~~~~~~~~=(v_\lambda-v)u^{\bar{p}-1}+v_\lambda\left(u^{\bar{p}-1}_\lambda-u^{\bar{p}-1}\right)
~~~{\rm in}~\mathbb{R}^{N}.
\endaligned
\end{equation}
Testing (\ref{eguotest}) with $w_\lambda$, we obtain
\begin{align*}
&\int_{\Sigma_\lambda^u}\nabla (u_\lambda-u) \nabla w_\lambda\mathrm{d}x
   -\vartheta\int_{\Sigma_\lambda^u}\frac{w_\lambda(u_\lambda-u)}{|x|^2}\mathrm{d}x
    +\vartheta\int_{\Sigma_\lambda^u} u_\lambda w_\lambda
      \left(\frac{1}{|x|^2}-\frac{1}{|x_\lambda|^2}\right)\mathrm{d}x
\\&~~~~~~~~
=\int_{\Sigma_\lambda^u}(v_\lambda-v)u^{\bar{p}-1}w_\lambda\mathrm{d}x
+\int_{\Sigma_\lambda^u}v_\lambda w_\lambda\left(u^{\bar{p}-1}_\lambda-u^{\bar{p}-1}\right)\mathrm{d}x.
\end{align*}
Note that $\bar{p}>2$ and $|x|>|x_\lambda|$ on $\Sigma_\lambda^u$. It follows from the Hardy inequality and
the Sobolev inequality that
\begin{equation} \label{varfainetest}
\aligned
&\frac{1}{(S(N))^2}\left(1-\frac{4\vartheta}{(N-2)^2}\right)|w_\lambda|_{2^*,\Sigma_\lambda^u}^2
\\&~~~
\leq
\int_{\Sigma_\lambda^u} |\nabla w_\lambda|^2 \mathrm{d}x
-\vartheta\int_{\Sigma_\lambda^u} \frac{w_\lambda^2}{|x|^2}\mathrm{d}x
\\&~~~
\leq
\int_{\Sigma_\lambda^u}(v-v_\lambda)u^{\bar{p}-1}w_\lambda\mathrm{d}x
+\int_{\Sigma_\lambda^u}v_\lambda w_\lambda\left(u^{\bar{p}-1}-u^{\bar{p}-1}_\lambda\right)\mathrm{d}x
\\&~~~
\leq
\int_{\Sigma_\lambda^u\cap \Sigma_\lambda^v}(v-v_\lambda)u^{\bar{p}-1}w_\lambda\mathrm{d}x
+(\bar{p}-1)\int_{\Sigma_\lambda^u}v_\lambda u^{\bar{p}-2} w_\lambda^2\mathrm{d}x
\\&~~~
\leq
|v-v_\lambda|_{\frac{2N}{N-\alpha},\Sigma_\lambda^v}
|u|_{2^*,\Sigma_\lambda^u}^{\bar{p}-1}
|w_\lambda|_{2^*,\Sigma_\lambda^u}
+(\bar{p}-1)|v_\lambda|_{\frac{2N}{N-\alpha},\Sigma_\lambda^u}
|u|_{2^*,\Sigma_\lambda^u}^{\bar{p}-2}
|w_\lambda|_{2^*,\Sigma_\lambda^u}^2,
\endaligned
\end{equation}
where $S(N)$ is the best Sobolev constant for the embedding
$D^{1,2}(\mathbb{R}^{N})\hookrightarrow L^{2^*}(\mathbb{R}^{N})$.
Since $|x-y|<|x_\lambda-y|$ for $x\in \Sigma_\lambda^v$
and $y\in \Sigma_\lambda^u$, we have that for $x\in \Sigma_\lambda^v$,
\begin{equation} \label{varfaineseta}
\aligned
0&<v(x)-v_\lambda(x)
\\&=\int_{\Sigma_\lambda}\frac{u^{\bar{p}}(y)}{|x-y|^{N-\alpha}}\mathrm{d}y+
   \int_{\Sigma_\lambda}\frac{u_\lambda^{\bar{p}}(y)}{|x_\lambda-y|^{N-\alpha}}\mathrm{d}y
\\&~~~~~~~~-\int_{\Sigma_\lambda}\frac{u^{\bar{p}}(y)}{|x_\lambda-y|^{N-\alpha}}\mathrm{d}y
   -\int_{\Sigma_\lambda}\frac{u_\lambda^{\bar{p}}(y)}{|x-y|^{N-\alpha}}\mathrm{d}y
\\&
=\int_{\Sigma_\lambda}\left(\frac{1}{|x-y|^{N-\alpha}}-\frac{1}{|x_\lambda-y|^{N-\alpha}}\right)
  \left(u^{\bar{p}}(y)-u^{\bar{p}}_\lambda(y)\right)\mathrm{d}y
\\&
\leq
\int_{\Sigma_\lambda^u}\left(\frac{1}{|x-y|^{N-\alpha}}-\frac{1}{|x_\lambda-y|^{N-\alpha}}\right)
  \left(u^{\bar{p}}(y)-u^{\bar{p}}_\lambda(y)\right)\mathrm{d}y
\\&
\leq
\int_{\Sigma_\lambda^u}\frac{u^{\bar{p}}(y)-u^{\bar{p}}_\lambda(y)}{|x-y|^{N-\alpha}}\mathrm{d}y.
\endaligned
\end{equation}
Therefore, the Hardy-Littlewood-Sobolev inequality implies that
\begin{equation} \label{varfaisyag}
\aligned
|v-v_\lambda|_{\frac{2N}{N-\alpha},\Sigma_\lambda^v}
&\leq
     \left|\int_{\Sigma_\lambda^u}\frac{u^{\bar{p}}(y)-u^{\bar{p}}_\lambda(y)}
        {|x-y|^{N-\alpha}}\mathrm{d}y\right|_{\frac{2N}{N-\alpha},\Sigma_\lambda^v}
\leq
  S(N,\alpha)|u^{\bar{p}}-u^{\bar{p}}_\lambda|_{\frac{2N}{N+\alpha},\Sigma_\lambda^u}
\\&
\leq
  S(N,\alpha)\bar{p}|u|_{2^*,\Sigma_\lambda^u}^{\frac{\alpha+2}{N-2}}|w_\lambda|_{2^*,\Sigma_\lambda^u}.
\endaligned
\end{equation}
It follows from (\ref{varfainetest}) and (\ref{varfaisyag}) that
\begin{equation} \label{fsgjeinfainetest}
\aligned
&\frac{1}{(S(N))^2}\left(1-\frac{4\vartheta}{(N-2)^2}\right)|w_\lambda|_{2^*,\Sigma_\lambda^u}^2
\\&~~~
\leq
S(N,\alpha)\left(\bar{p}|u|_{2^*,\Sigma_\lambda^u}^{\frac{2\alpha+4}{N-2}}
   +(\bar{p}-1)|u|_{2^*}^{\bar{p}}|u|_{2^*,\Sigma_\lambda^u}^{\bar{p}-2}\right)
   |w_\lambda|_{2^*,\Sigma_\lambda^u}^2.
\endaligned
\end{equation}
Since $|u|_{2^*,\Sigma_\lambda^u}\rightarrow 0$ as $\lambda \rightarrow -\infty$,
there is $\bar{\lambda}<0$ such that for $\lambda<\bar{\lambda}$,
$$|w_\lambda|_{2^*,\Sigma_\lambda^u}=0.$$
Therefore, $|\Sigma_\lambda^u|=|\Sigma_\lambda^v|=0$. Since $u \in C(\mathbb{R}^{N}\setminus\{0\})$.
We deduce that for $\lambda<\bar{\lambda}$,
$$u_\lambda\geq u~~~\mbox{and}~~~v_\lambda\geq v ~~~\mbox{on}~\Sigma_\lambda\setminus\{0_\lambda\}.$$
Thus, (\ref{eguotest}) implies that
$$
-\Delta (u_\lambda-u)\geq 0~~~\mbox{in}~\Sigma_\lambda\setminus\{0_\lambda\}.
$$
We claim that $u_\lambda > u$ on $\Sigma_\lambda\setminus\{0_\lambda\}$.
If not, $u_\lambda(x_0)=u(x_0)$ for some $x_0\in \Sigma_\lambda\setminus\{0_\lambda\}$, then by
the strong maximum principle, we see $u_\lambda=u$ on $\Sigma_\lambda\setminus\{0_\lambda\}$. From
Theorem \ref{AB}, we have
$$
\lim\limits_{x\rightarrow 0_\lambda} (u_\lambda(x)-u(x))=\infty,
$$
which is a contradiction.

Let $$\lambda^*=\sup\{\bar{\lambda}<0:u_\lambda >u~\mbox{on}~
 \Sigma_\lambda\setminus\{0_\lambda\},~\forall \lambda<\bar{\lambda} \}.$$
Next we show that $\lambda^*=0$. Arguing indirectly, suppose that $\lambda^*<0$.
Since $u \in C(\mathbb{R}^{N}\setminus\{0\})$, we know that $u_{\lambda^*} \geq u$
on $\Sigma_{\lambda^*}\setminus\{0_{\lambda^*}\}$. Therefore, similar to (\ref{varfaineseta}),
for $x\in \Sigma_{\lambda^*}$,
$$
v(x)-v_{\lambda^*}(x)=\int_{\Sigma_{\lambda^*}}\left(\frac{1}{|x-y|^{N-\alpha}}-\frac{1}{|x_{\lambda^*}-y|^{N-\alpha}}\right)
  \left(u^{\bar{p}}(y)-u^{\bar{p}}_{\lambda^*}(y)\right)\mathrm{d}y\leq 0.
$$
Thus, (\ref{eguotest}) leads to
$$-\Delta (u_{\lambda^*}-u)\geq 0~~~\mbox{in}~\Sigma_{\lambda^*}\setminus\{0_{\lambda^*}\}.$$
Hence, the strong maximum principle and Theorem \ref{AB} give
\begin{equation} \label{flamiceshiein}
u_{\lambda^*} > u~~~\mbox{on}~\Sigma_{\lambda^*}\setminus\{0_{\lambda^*}\},
\end{equation}
So, we have that as $\lambda \rightarrow \lambda^*+0$,
$|u|^{2^*}\chi_{\Sigma_\lambda^u}\rightarrow 0$ a.e. on $\mathbb{R}^{N}$.
It follows from Lebesgue dominated theorem that
$$\lim\limits_{\lambda \rightarrow \lambda^*+0}\int_{\Sigma_\lambda^u}|u|^{2^*} \mathrm{d}x=0,
 $$
From the above and (\ref{fsgjeinfainetest}), there is $\varepsilon_0>0$
such that for $\lambda^*<\lambda<\lambda^*+\varepsilon_0$,
$$|w_\lambda|_{2^*,\Sigma_\lambda^u}=0.$$
Thus, $u_\lambda \geq u$ on $\Sigma_\lambda\setminus\{0_\lambda\}$.
Similar to (\ref{flamiceshiein}), for $\lambda^*<\lambda<\lambda^*+\varepsilon_0$, we have
$$
u_\lambda > u~~~\mbox{on}~\Sigma_\lambda\setminus\{0_\lambda\}.
$$
This contradicts the definition of $\lambda^*$. Therefore, $\lambda^*=0$.

Finally, since the direction of $x_1$ can be chosen arbitrarily, we obtain that $u(x)$
must radially symmetric and decreasing about the origin.
\end{proof}

\section*{Acknowledgments}  This work was supported by the National Natural Science Foundation of China (Grant Nos. 12101273, 11971485),
 and the Foundation of Education Department of Jiangxi Province (No. GJJ200861).

\section*{Data Availability Statement}  The data that supports the findings of this study are available within the article [and its supplementary material].

\bigskip

\end{document}